\newcommand{\pierre}[1]{{\color{black}#1}}
\newcommand{\lj}[1]{{\color{black}#1}} 
\newcommand{\po}{\left(}
\newcommand{\pf}{\right)}
\newcommand{\E}{\mathbb E}
\newcommand{\R}{\mathbb R} 
\newcommand{\C}{\mathcal C}
\newcommand{\Z}{\mathbb Z} 
\newcommand{\N}{\mathbb N} 
\newcommand{\B}{\mathcal B}
\newcommand{\F}{\mathcal F}
\newcommand{\dd}{\text{d}}
\newcommand{\na}{\nabla}
\newcommand{\proc}[2]{\mathbb{P}\left(#1~\middle|~#2\right)}
\newcommand{\procz}[3]{\mathbb{P}_{#1}\left(#2~\middle|~#3\right)}
\newcommand{\ec}[2]{\mathbb{E}\left(#1~\middle|~#2\right)}
\newtheorem{theorem}{Theorem}
\newtheorem{assumption}{Assumption}
\newtheorem{lemma}[theorem]{Lemma}
\newtheorem{proposition}[theorem]{Proposition}
\newtheorem{corollary}[theorem]{Corollary}
\newtheorem{remark}{Remark}
\title{Convergence of the kinetic annealing for general potentials}
\author{Lucas Journel, Pierre Monmarch\'e}
\begin{document}

\maketitle

\abstract{The convergence of the kinetic Langevin simulated annealing is proven under mild assumptions on the potential $U$ for slow logarithmic cooling schedules, which widely extends the scope of the previous results of \cite{Pierro}. Moreover, non-convergence for  fast logarithmic and non-logarithmic cooling schedules is established. The results are based on an adaptation to  non-elliptic non-reversible kinetic settings of a localization/local convergence strategy developed by Fournier and Tardif in \cite{FourTar} in the overdamped elliptic case, and on precise quantitative high order Sobolev hypocoercive estimates. }

\medskip

\noindent\textbf{Keywords:} Langevin diffusion ; simulated annealing ; hypocoercivity ; metastability ; stochastic optimization

\medskip

\noindent\textbf{MSC class:} 60J60 ; 46N30

\section{Introduction and main results}

Given a potential $U:\R^d\mapsto\R$, the goal of a simulated annealing procedure is to minimize  $U$ by designing a stochastic process $(X_t)_{t\geqslant0}$ whose law at time $t$ is close to the probability density proportional to $e^{-\beta_t U(x)}\dd x,$  
 where $\beta:\R_+\mapsto\R_+$ is called the cooling schedule. As $\beta$ goes to infinity, this probability law concentrates around the global minimizers of $U$.

The most classical case is based on the  overdamped Langevin process:
\[ \dd X_t = -\beta_t\na U(X_t)\dd t + \sqrt{2}\dd B_t\,, \]
where $(B_t)_{t\geqslant 0}$ is a standard Brownian motion on $\R^d$. For a fixed $\beta$, the density $e^{-\beta U(x)}\dd x$ is a stationary measure for this process. The idea is thus that, if $\beta$ increases sufficiently slowly, the law of $X_t$ gets and remains close to its instantaneous equilibrium. As a consequence, the convergence of the simulated annealing algorithm, in the sense of convergence in probability of $U(X_t)$ toward $\min U$ as $t\rightarrow +\infty$, is related to the longtime convergence to equilibrium of the process at a fixed but high $\beta$. On the contrary, when $\beta$ goes to infinity too fast, the algorithm is expected to fail with positive probability, i.e. the law of $X_t$ is not expected to be close to $e^{-\beta_t U}$ and $U(X_t)$ to converge to $\min U$.

Proof of the convergence of the overdamped Langevin simulated annealing for slow logarithmic cooling schedule (with the optimal condition involving the critical height of the potential, see below) and of the non-convergence for fast logarithmic cooling schedule  was first established by Holley, Kusuoaka and Strook \cite{HolStr,HoStKu}, using Sobolev inequalities, for a potential $U$ on a compact manifold. The case of $\R^n$ has been studied by Chiang, Hwang and Sheu \cite{CHS}, Royer \cite{Royer} and Miclo \cite{Miclo}, under restrictive conditions on the behavior at infinity of $U$, in particular $|\na U| \rightarrow +\infty$ at infinity. These conditions are related to the functional inequalities used in these works, in particular spectral gap and Nelson hypercontractivity inequalities. The question of reducing these assumptions in order to consider slowly-growing potentials has been addressed by Zitt in \cite{Zitt}, essentially by replacing spectral gap inequalities by weaker functional inequalities. The results of Zitt apply for instance if, outside some ball, $U(x)=|x|^\alpha$  with $\alpha\in(0,1)$. 

More recently, Fournier and Tardif  in \cite{FourTar} and with one of the author in \cite{FTM} have been interested in somehow minimal conditions on the growth of $U$ at infinity. In \cite{FTM}, they established that, for coercive potentials in the sense that $x\cdot \na U(x) \geqslant 0$ for $|x|$ large enough, there is a phase transition for $U_\alpha(x) = \alpha \ln(1+\ln(1+|x|^2))$ at some value $\alpha_*$ of $\alpha$ (depending on  the cooling schedule  $\beta$ and the dimension), i.e. there is convergence if $\alpha>\alpha_*$ and non-convergence if $\alpha<\alpha_*$, which is related to the transient properties of Bessel processes. More generally, convergence of the annealing algorithm is also proven in \cite{FTM} under conditions that allow arbitrarily slow growth. In \cite{FourTar},  the convergence of the simulated annealing is established as soon as $\lim_{|x|\rightarrow\infty}U(x)=\infty$ and $\int_{\R^d}e^{-\alpha_0U(x)}\dd x<\infty$ for some $\alpha_0>0$. Although it doesn't cover all the cases of \cite{FTM} (notice indeed that the condition is not met for $U_\alpha$ for any $\alpha>0$), this is a very simple and mild condition. One of the main differences of \cite{FourTar,FTM} with respect to previous works is that the question of the recurrence of the process is treated separately from the question of convergence in probability to the minimum of $U$. Indeed, once recurrence is proven, it is essentially sufficient to use the known results of convergence in the compact case to conclude.   Notice that, unfortunately,  this localization argument does not provide a rate of convergence as did previous works. \lj{Note as well} that the idea that the behavior of $U$ at infinity is not so important already appears in \cite{CHS,Royer} (see in particular \cite[Lemma 6.4]{CHS}) where  it is proven that it is sufficient (under the conditions enforced in these works) to prove the result in the case where $U(x)=|x|^4$ for $|x|$ large enough.


On the other hand, the second author studied in \cite{Pierro} the simulated annealing based on the kinetic Langevin process:
\begin{equation*}
  \left\{
      \begin{aligned}
        &\dd X_t = Y_t\dd t\\    
        &\dd Y_t = -\nabla U(X_t)\dd t - \beta_t Y_t\dd t + \sqrt{2}\dd B_t\,.\\
      \end{aligned}
    \right.  
\end{equation*}
For a constant $ \beta$, the invariant measure is proportional to $e^{-\beta H}$ with the Hamiltonian $H(x,y)= U(x)+|y|^2/2$. The use of this process, which is non-reversible and has a ballistic rather than diffusive behavior, is motivated by its better convergence properties with respect to the overdamped process (although, as discussed in \cite{Pierro}, in the regime $\beta\rightarrow +\infty$, it doesn't reduce the critical height of the energy landscape). Convergence of the kinetic simulated annealing in established in \cite{Pierro} for slow logarithmic cooling schedules similar to the overdamped case, under restrictive conditions on $U$, namely $U$ is essentially quadratic at infinity, and the Hessian of $U$ is bounded. The proof is  similar to the overdamped Langevin case except that establishing quantitative longtime convergence estimates (at a fixed $\beta$) for the process toward its equilibrium rely on so-called hypocoercive methods, as introduced by Villani in \cite{Vil}. The arguments of \cite{Pierro} have been adapted to     Generalized Langevin processes  in \cite{GenLan}.



The present work is concerned with the kinetic Langevin simulated annealing. Our contributions with respect to   \cite{Pierro} are the following. First, following the method of \cite{FourTar}, the conditions on $U$ are considerably weakened. Notice that, in the kinetic case, it means we can consider potentials that grow slower than those considered in \cite{Pierro}, but also potentials that grow much faster (arbitrarily fast in fact), while the results of \cite{Pierro} require $U(x)/(|x|^2+1)$  and $\na^2 U$ to be bounded. Second, we prove the failure of the algorithm with fast cooling schedule, which was yet to be established in an hypocoercive case. Indeed, the failure of the overdamped Langevin simulated annealing is proven in \cite{HoStKu} thanks to hypercontractivity results that are not available for the kinetic process, and the hypocoercive convergence results used in \cite{Pierro} to prove convergence are too weak to conclude \lj{about the failure of the algorithm} in the fast cooling case \pierre{(see the discussion at the beginning of Section~\ref{fullprocess})}. By proving the non-convergence of the algorithm under the same condition as in the overdamped case, we make rigorous the heuristic discussion in \cite{Pierro} according to which the kinetic process does not change the optimal condition on the cooling schedule. Third, as noted in \cite{GenLan},  a technical truncation argument in \cite{Pierro}, required for the rigorous computation of the modified entropy dissipation, is incorrect, and  we have solved this issue (see Section~\ref{hypo1} \pierre{ and more precisely Remark~\ref{rem:err_truncation}}).

 More precisely, we study the Markov process $(Z_t)_{t\geqslant0}=(X_t,Y_t)_{t\geqslant0}$ on $\R^{2d}$ that solves
\begin{equation}\label{eq}
  \left\{
      \begin{aligned}
        &\dd X_t = Y_t\dd t\\    
        &\dd Y_t = -\nabla U(X_t)\dd t - \gamma_t Y_t\dd t + \sqrt{2\gamma_t\beta_t^{-1}}\dd B_t\,,\\
      \end{aligned}
    \right.	 
\end{equation}
where $\gamma:\R_+\rightarrow \R_+$ is a friction parameter. We retrieve the settings of \cite{Pierro} with $\gamma_t=\beta_t$. We remark that the extension to Generalized Langevin processes as in \cite{GenLan} would not raise any particular difficulty, but we don't consider it for the sake of clarity.
We will work under the following set of assumptions :

\begin{assumption}\label{hyp}
\hspace{2em}
\begin{itemize}
\item $U:\R^d\mapsto\R$ is a  $\mathcal{C}^{\infty}$ potential such that $\min U=0$, $\lim_{|x|\rightarrow\infty}U(x)=\infty$, and there exists $\alpha_0>0$ such that : \[\int_{\R^d} e^{-\alpha_0 U(x)}\dd x < \infty.\]
\item The cooling schedule $\beta:\R_+\rightarrow\R_+$ is given by
\begin{equation}\label{bet}
\beta_t = \frac{\ln(e^{c\beta_0}+t)}{c},
\end{equation}
for some parameters $c>0$ and $\beta_0>0$.
\item The friction $\gamma:\R_+\rightarrow\R_+$ is a $\mathcal{C}^1$ function and there exists $\kappa>0$ such that for all $t\geqslant0$, $\gamma_t\geqslant\kappa$ and $\gamma'_t\leqslant\frac{1}{\kappa(1+ t)}$. 
In particular there exists $L$ such that $\gamma_t \leqslant L\beta_t$.
\item The critical height $ c^* $ of $U$ is finite, where $c^{\ast} = \sup_{x_1,x_2}c(x_1,x_2)$ and  
\[c(x_1,x_2) = \inf\left\{\max_{0\leqslant t \leqslant1}U(\xi(t))-U(x_1)-U(x_2) \right\}\]
where the infimum runs over $\left\{ \xi\in\mathcal{C}\left(\left[0,1\right],\R^d\right),\xi(0)=x_1, \xi(1)=x_2 \right\}$.
\end{itemize}
\end{assumption}


The condition $\min U =0$ is imposed for simplicity, it can always be enforced by changing $U$ to $U-\min U$. The specific form of $\beta$ is also made for simplicity, since it is known that, in order to study the convergence in probability for large time for simulated annealing algorithm, only logarithmic schedules are relevant. In particular, notice that, under Assumption~\ref{hyp}, the time-shifted process $(Z_{t_0+t})_{t\geqslant 0}$ for any $t_0\geqslant 0$ satisfies Assumption~\ref{hyp} with the same $U,c,\kappa,L$ and with $\beta_0$ replaced by $\beta_{t_0}$.


The critical height $c^*$ represents the largest energy barrier the process has to cross in order to go from any local minimum to any global one. In the classical overdamped case, disregarding the question of the behavior of $U$ at infinity, it is well known that, at least in the case where the global minimizer of $U$ is unique, the algorithm converges if $c>c^*$ (slow cooling) and has a positive probability to fail (i.e. to never visit a global minimum) if $c<c^*$ (fast cooling). We retrieve this dichotomy in the kinetic case. 

In the slow cooling case, we extend the results of \cite{Pierro}:

\begin{theorem}\label{Th}
Under Assumption~\ref{hyp}, assume furthermore that $c>c^*$. Then any solution $(Z_t)_{t\geqslant 0}$ of \eqref{eq} satisfies
\[\forall\ \delta>0,\quad \mathbb{P}(H(Z_t)\geqslant\delta)\rightarrow0.\]
\end{theorem}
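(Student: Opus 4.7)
The plan is to follow the two-step localization / local-convergence strategy of \cite{FourTar}, adapted to the hypoelliptic kinetic setting. Concretely, I would fix a large ball $B_R$ containing all the relevant geometric data (in particular all minima and saddles involved in the critical height $c^*$), replace $U$ outside $B_R$ by a smooth confining (say quadratic) extension $\tilde U$ that leaves $c^*$ unchanged, and compare the original process $Z_t$ with the process $\tilde Z_t$ driven by $\tilde U$ and the same Brownian motion. The conclusion will follow from combining (i) a recurrence / tightness estimate for $Z_t$ ensuring $\mathbb{P}(Z_t\ne\tilde Z_t)\to 0$ as $R,t\to\infty$ in a suitable order, with (ii) a quantitative hypocoercive convergence estimate for $\tilde Z_t$ ensuring that $\mathrm{Law}(\tilde Z_t)$ concentrates on $\{H=0\}$.

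For the recurrence part, I would build a time-dependent Lyapunov function of the form
\[
V_t(x,y) \;=\; \Phi\bigl(H(x,y)\bigr) \;+\; \varepsilon\,\langle y,\nabla U(x)\rangle\,\chi(x,y),
\]
with $\Phi$ slowly growing (for instance $\Phi(h)=\ln(1+h)$), $\chi$ a smooth bounded cutoff, and $\varepsilon>0$ small. The bounded cross term is the usual hypocoercive correction and produces, in the generator, a negative contribution proportional to $|\nabla U(x)|^2$ which compensates the degeneracy of the noise in the $x$-direction; the concavity of $\Phi$ together with the factor $\beta_t^{-1}$ ensures that the positive contribution $\gamma_t\beta_t^{-1}\Phi''(H)$ coming from the diffusion term is controllable using only the integrability of $e^{-\alpha_0 U}$. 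From the resulting drift inequality for $V_t(Z_t)$ I would deduce $\sup_{t\geqslant 0}\mathbb{E}[\Phi(H(Z_t))]<\infty$, hence tightness of $\mathrm{Law}(Z_t)$ and exit-time estimates of the form $\mathbb{P}(Z_s\notin B_R\text{ for some } s\leqslant t)\to 0$ when $R\to\infty$ fast enough compared to $t$.

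For the local convergence on $\tilde Z_t$, I would invoke the quantitative hypocoercive estimates developed later in the paper (the high-order Sobolev estimates advertised in the abstract). At frozen $\beta$ they yield an exponential contraction toward $\nu_\beta\propto e^{-\beta\tilde H}$ whose rate is at least of order $e^{-c^*\beta}$ up to polynomial corrections. Inserting $\beta_t=c^{-1}\ln(e^{c\beta_0}+t)$ with $c>c^*$, so that $\int_0^\infty e^{-c^*\beta_t}\dd t = \infty$, and integrating a Gr\"onwall-type inequality along the lines of Holley--Kusuoka--Stroock yields $\mathrm{Law}(\tilde Z_t)\to \nu_{\beta_t}$ in a suitable functional norm; since $\nu_{\beta_t}$ concentrates on $\{\tilde H=0\}=\{H=0\}$, this gives $\mathbb{P}(H(\tilde Z_t)\geqslant\delta)\to 0$. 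The claim then follows from $\mathbb{P}(H(Z_t)\geqslant\delta)\leqslant \mathbb{P}(Z_t\ne\tilde Z_t)+\mathbb{P}(H(\tilde Z_t)\geqslant\delta)$.

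The main obstacle will be step (i), namely building a \emph{kinetic} Lyapunov function that works under the very mild assumption $\int e^{-\alpha_0 U}<\infty$ with no pointwise control on $\nabla U$ or $\nabla^2 U$: in the overdamped setting of \cite{FourTar} the naive candidate $e^{-\alpha U}$ essentially does the job, but in the kinetic setting it is not coercive in $y$, and the hypocoercive cross term needed to handle the degenerate noise produces $|\nabla U|^2$ contributions that must be reabsorbed even though $\nabla U$ is a priori unbounded. The delicate point is to choose the cutoff $\chi$ so that this absorption survives, and to verify that the resulting time-dependent remainders remain integrable against the slow logarithmic schedule $\beta_t$. Once this is achieved, the comparison with $\tilde Z_t$ and the passage from hypocoercive contraction to convergence in probability for $Z_t$ should follow by standard arguments.
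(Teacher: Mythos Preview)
Your two-step outline (recurrence/localization followed by hypocoercive convergence on a modified compact problem) matches the paper's architecture, but the concrete implementation you propose for step (i) has a real gap that you yourself flag and do not resolve.

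The issue is the Lyapunov function. Any hypocoercive correction of the type $\varepsilon\langle y,\nabla U(x)\rangle\chi(x,y)$, once you apply the generator $L_t$, produces a term $y\cdot\nabla^2U(x)\,y$ coming from the transport part $y\cdot\nabla_x$. Under Assumption~\ref{hyp} there is no pointwise control whatsoever on $\nabla^2U$ (nor on $\nabla U$), so this term cannot be absorbed by the good $-|\nabla U|^2$ contribution, regardless of the cutoff $\chi$: if $\chi$ depends on $x$ you differentiate it and bring in $\nabla U$; if it depends on $H$ you get $\chi'(H)\nabla U$ again. This is exactly why the paper explicitly avoids Lyapunov arguments (see the opening of Section~\ref{return}) and instead adapts the entropy-type functional of \cite{FourTar},
\[
N(t)=\int g_t\ln(1+g_t)\,e^{-\beta_tH},\qquad g_t=f_te^{\beta_tH},
\]
whose dissipation involves only $|\nabla_y g_t|^2$ and never requires bounds on derivatives of $U$ (Lemma~\ref{energybounds}). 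This yields $\sup_t\mathbb{E}[H(Z_t)]<\infty$ directly.

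There is a second structural difference. Your coupling inequality $\mathbb{P}(H(Z_t)\geqslant\delta)\leqslant\mathbb{P}(Z_t\neq\tilde Z_t)+\mathbb{P}(H(\tilde Z_t)\geqslant\delta)$ needs $\mathbb{P}(\tau_R\leqslant t)\to0$ uniformly in large $t$ for some fixed $R$, i.e.\ essentially $\sup_t H(Z_t)$ tight, which a mere bound on $\sup_t\mathbb{E}[\Phi(H(Z_t))]$ does not give. The paper bypasses this with a renewal argument: it shows (Proposition~\ref{boundedness}) that whenever the process enters $\{H\leqslant A\}$, it has probability at least $1/4$, \emph{uniformly in the entry time}, of never leaving $\{H\leqslant K_A\}$ again; combined with $\liminf_t H(Z_t)<\infty$ a.s.\ this yields $\sup_t H(Z_t)<\infty$ a.s.\ (Proposition~\ref{prop:born{\'e}}), after which the comparison with the compactified process is exact on each event $\{\sup_t H(Z_t)\leqslant K\}$. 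This is the localization mechanism of \cite{FourTar}; your diagonal $R=R(t)$ scheme would instead make the hypocoercive constants for $\tilde Z_t$ depend on $t$ in an uncontrolled way.
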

Since $H(x,v)=U(x)+|v|^2/2$, this implies the convergence in probability of $X_t$ to the set of global minimizers of $U$.

On the other hand, if $c<c^*$, then the process might remain stuck in a region that contains no global minimum of $U$. In fact, a slightly stronger condition is required. Indeed, it is possible that $c^*>0$ with all minima of $U$ being global, in which case $H(Z_t)$ may go to zero with fast cooling schedule, while the law of $Z_t$ is not close to its local equilibrium. To be more precise, we need some additional definitions. For $x,y\in\R^d$, let 
\begin{equation}\label{barriersize}
    \tilde{c}(x,y)= \inf\left\{\max_{0\leqslant t \leqslant1}U(\xi(t))-U(x) \right\}
\end{equation}
where the infimum runs over $\left\{ \xi\in\mathcal{C}\left(\left[0,1\right],\R^d\right),\xi(0)=x, \xi(1)=y \right\}$. We define the depth of  $x\in \R^d$ by 
\begin{equation}\label{depth}
   \mathrm D(x)= \inf\left\{ \tilde c(x,y),\, y\in\R^d,\ U(y)< U(x)\right\},
\end{equation}
with $\inf\emptyset=\infty$. It is clear that $\mathrm{D}(x)=0$ if $x$ is not a local minimum of $U$. For $x\in\R^d$ with $\mathrm{D}(x)>0$ and $a\in (0,\mathrm{D}(x))$, we define the cup of bottom $x$ and height $a$ (this is the vocabulary of \cite{Hajek}) as 
\begin{equation}\label{cusp}
\mathrm{C}(x,a) = \{y\in\R^d,\ \tilde c(x,y) < a\}.
\end{equation}
We want to discard pathological cases where there are two non-global local minima $x_1,x_2$ with $U(x_1)=U(x_2)$, $\mathrm{D}(x_1)=\mathrm{D}(x_2) > c$ and $c(x_1,x_2)=c$, see Figure~\ref{Figure2} \pierre{(these cases do not prevent the result to hold, but the proof doesn't work directly, see Remark~\ref{rem:noncv} below. Notice that the problem is not that there are two local minima with the same energy level and depth, which is a pretty common situation as soon as there are some symmetries in the system; the problem is that the elevation $c(x_1,x_2)$ between them is \emph{exactly} the parameter $c$ chosen by the user in the cooling schedule which, now, is a very unlikely situation).} Hence, we work under the following condition.
\begin{assumption}\label{hyp2}
Assumption~\ref{hyp} holds and there exist $\tilde x\in \R^d$ with $\mathrm D(\tilde x) > c$ and $a\in (c,\mathrm{D}(\tilde x))$ such that for all $y\in \mathrm C(\tilde x,a)$, $\tilde c(y,\tilde x) < c$.
\end{assumption}

\begin{figure}
    \centering
    \includegraphics[scale=0.35]{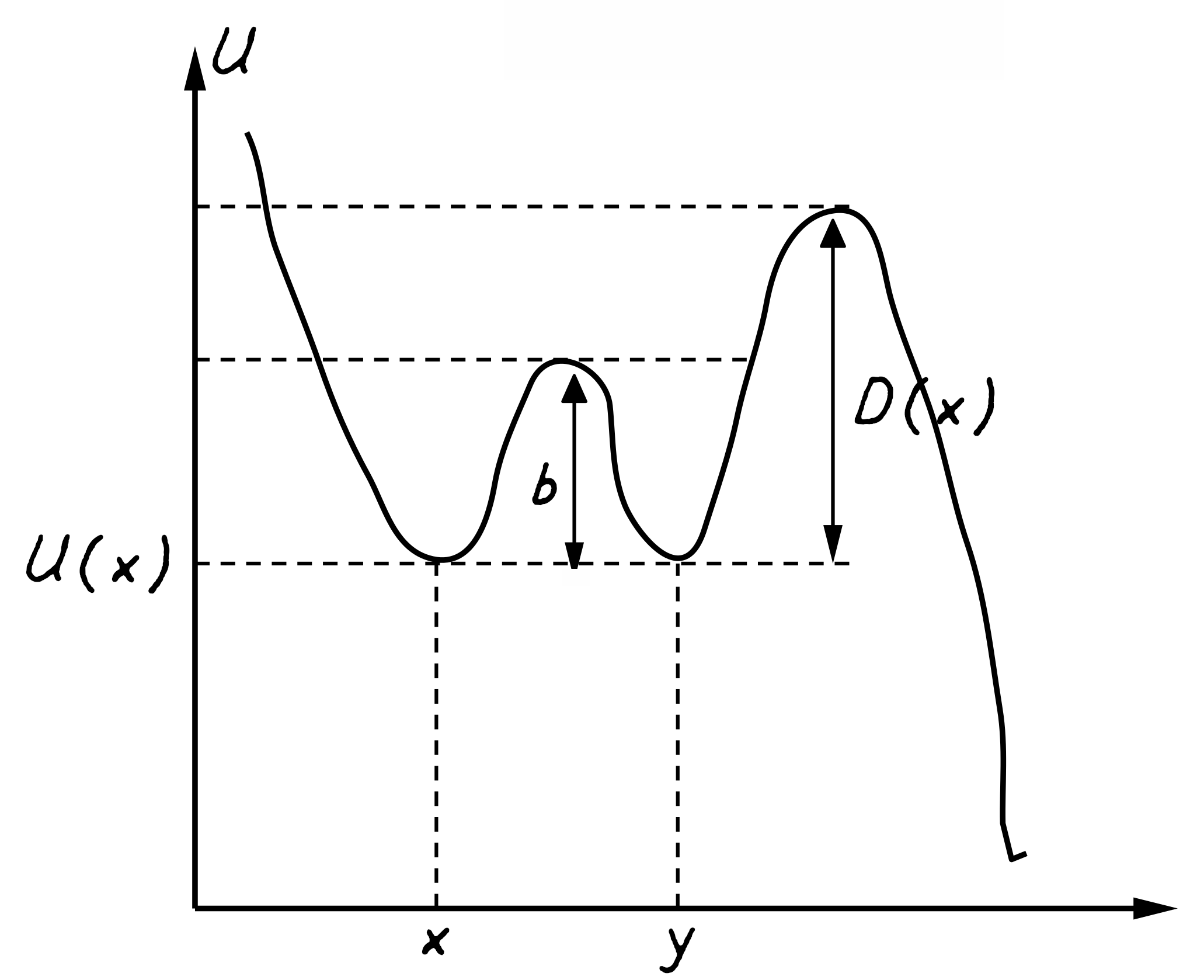}
    \caption{Case with two local non-global minima $x$ and $y$ at the same energy level. If $c<b$, then Assumption~\ref{hyp2} holds by chosing either $\tilde x=x$ or $\tilde x=y$ and any $a\in(c,b)$. If $c\in(b,D(x))$, again Assumption~\ref{hyp2} holds with $\tilde x\in\{x,y\}$ and any $a\in(b,D(x))$. However, if $c=b$, Assumption~\ref{hyp2} does not hold with $\tilde x\in\{x,y\}$.}
    \label{Figure2}
\end{figure}

\begin{theorem}\label{non}
Under Assumption~\ref{hyp2}, for all initial condition such  that $\mathbb P(X_0 \in \mathrm{C}(\tilde x,c))>0$ and all $\delta>0$, the solution of  \eqref{eq}  is such that 
\[  \mathbb{P}\left( X_t \in \mathrm{C}(\tilde x,c+\delta)\ \forall t\geqslant 0 \right) >0.\]
\end{theorem}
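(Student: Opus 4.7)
The heuristic behind non-convergence is that the exit rate from $\mathrm{C}(\tilde x,c+\delta)$ at instantaneous temperature $1/\beta_t$ behaves like $e^{-\beta_t(c+\delta)}\asymp t^{-(c+\delta)/c}$, which is integrable since $c+\delta>c$; hence, with positive probability, no exit ever occurs. My plan is to make this rigorous through a Lyapunov/supermartingale argument adapted to the hypocoercive dynamics. As a preliminary reduction, since $\mathrm{C}(\tilde x,c)$ is open and $\mathbb{P}(X_0\in\mathrm{C}(\tilde x,c))>0$, the Stroock--Varadhan support theorem applied on $[0,t_0]$ yields $\mathbb{P}((X_{t_0},Y_{t_0})\in U_0)>0$ for any prescribed open neighbourhood $U_0\subset\mathrm{C}(\tilde x,c)\times\R^d$ of $(\tilde x,0)$. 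Combined with the time-shift invariance of Assumption~\ref{hyp2} noted after Assumption~\ref{hyp}, this reduces the problem to an arbitrarily large initial $\beta_0$ and an initial condition concentrated near $(\tilde x,0)$.

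Fix $a'\in(c+\delta,a)$ and construct $\Phi\in\mathcal{C}^2(\R^d;[0,a'])$ acting as a smooth version of $\tilde c(\tilde x,\cdot)\wedge a'$, with $\Phi=0$ on a neighbourhood of $\tilde x$, $\Phi\leq c+\delta/4$ on $\mathrm{C}(\tilde x,c)$, $\Phi\geq c+3\delta/4$ outside $\mathrm{C}(\tilde x,c+\delta)$, $\Phi\equiv a'$ outside $\mathrm{C}(\tilde x,a')$, with bounded $\nabla\Phi$, $\nabla^2\Phi$ and, crucially, $\nabla\Phi=\chi'(U)\nabla U$ on the ascending set $\{0<\Phi<a'\}$ for some $\chi$ with $\chi'\in[0,1]$. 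This produces the pointwise coercivity $\nabla U\cdot\nabla\Phi\geq|\nabla\Phi|^2$. The return property in Assumption~\ref{hyp2} enters here to guarantee that the sublevel-set topology inside $\mathrm{C}(\tilde x,a')$ is compatible with such a global $\mathcal{C}^2$ construction.

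For small $\eta>0$ and a constant $M>0$ to be fixed, I would consider
\begin{equation*}
V(x,y,t)=\exp\Big(\beta_t\big[\Phi(x)+\tfrac{1}{2}|y|^2+\eta\,y\cdot\nabla\Phi(x)\big]-M\Big),
\end{equation*}
where the cross term $\eta\,y\cdot\nabla\Phi$ is the Villani-type modification transferring the $y$-dissipation of the hypoelliptic noise into effective $x$-dissipation along $\nabla\Phi$. Applying Ito to $V(X_t,Y_t,t)$ and exploiting the Hamiltonian cancellation between $y\cdot\nabla_x$ and $\nabla U\cdot\nabla_y$, together with $\nabla\Phi=\chi'(U)\nabla U$, one expects a drift bound $(\partial_t+L)V\leq C\beta_t'\cdot V$ up to the exit time $\tau$ of $\mathrm{C}(\tilde x,c+\delta)$; the negative contributions $-\eta\beta_t(\nabla U\cdot\nabla\Phi-|\nabla\Phi|^2)$ and $-\tfrac12\gamma_t\beta_t|y|^2$ absorb the $y$-dependent error terms once $\eta$ is small. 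The stopped process $V_{t\wedge\tau}\exp(-C(\beta_{t\wedge\tau}-\beta_0))$ is then a non-negative supermartingale.

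With this in hand, the initial condition concentrated near $(\tilde x,0)$ makes $\mathbb{E}[V_0]$ small, while on $\{\tau<\infty\}$ one has $V_\tau\geq\exp(\beta_\tau(c+\delta/2-O(\eta))-M)$. Combining via Markov's inequality and taking $\beta_0$ large through the first step produces $\mathbb{P}(\tau<\infty\mid\text{initial event})<\mathbb{P}(\text{initial event})$, the desired conclusion. The principal obstacle I anticipate is the construction of $\Phi$ with the gradient-alignment property on a landscape that may contain auxiliary local minima and saddles inside $\mathrm{C}(\tilde x,a')$, together with the precise Ito bookkeeping ensuring that the constant $C$ in the drift estimate is strictly less than $\delta/2$, so that the exit lower bound on $V_\tau$ dominates the supermartingale correction.
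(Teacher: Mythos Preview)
Your supermartingale computation has a genuine gap that is not merely bookkeeping. Applying It\^o's formula to $V=\exp(\beta_t G-M)$ with $G=\Phi+\tfrac12|y|^2+\eta\,y\cdot\nabla\Phi$, the drift of $V$ acquires the quadratic variation contribution $\gamma_t\beta_t^{-1}\cdot\beta_t^2|\nabla_y G|^2=\gamma_t\beta_t|y+\eta\nabla\Phi|^2$, whose leading part $\gamma_t\beta_t|y|^2$ \emph{exactly cancels} the friction term $-\gamma_t\beta_t|y|^2$ coming from $\beta_t L_t G$. Taking $\nabla\Phi=\nabla U$ so that the Hamiltonian transport cancels, what remains is
\[
\frac{(\partial_t+L_t)V}{V}=\beta_t'G+\beta_t\eta\bigl[y\cdot\nabla^2 U\,y-(1-\gamma_t\eta)|\nabla U|^2+\gamma_t\,y\cdot\nabla U\bigr]+\gamma_t d\,.
\]
There is no surviving negative $|y|^2$ term to absorb $\beta_t\eta\,y\cdot\nabla^2 U\,y$ or the cross term, and since $\tau$ is defined only through $X$, the velocity $|y|$ is unbounded on $\{t<\tau\}$. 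The drift is thus of order $\beta_t$, not $\beta_t'$, so $(\partial_t+L)V\leqslant C\beta_t' V$ fails. Replacing $\tfrac12|y|^2$ by $\tfrac{\lambda}{2}|y|^2$ with $\lambda<1$ restores a term $-\gamma_t\beta_t\lambda(1-\lambda)|y|^2$, but simultaneously destroys the transport cancellation, reintroducing $(1-\lambda)\beta_t\,y\cdot\nabla U$, and one checks that no choice of $\lambda,\eta$ makes the full drift $O(\beta_t')$.

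This obstruction is exactly why the paper does \emph{not} attempt a pathwise exponential Lyapunov function. Instead it compactifies both position and velocity on a torus (adding auxiliary elliptic noise in $x$ where the periodised kinetic potential is non-quadratic), and proves a uniform $L^\infty$ bound on the relative density $h_t^K=f_t^K/\mu_{\beta_t}^K$ via $H^k$-hypocoercivity with $k>d$ followed by Sobolev embedding (Proposition~\ref{evol1}, Section~\ref{hypo2}). This gives $\mathbb P(H_K(Z_t^K)\geqslant c+\delta)\leqslant \|h_t^K\|_\infty\,\mu_{\beta_t}^K(H_K\geqslant c+\delta)\leqslant C(1+t)^{-1-\delta/(2c)}$, whose integrability is then exploited through an It\^o/up-crossing argument applied to a \emph{bounded} test function $\psi_K\circ H_K$ (Lemma~\ref{opt}), where no quadratic-variation blow-up can occur. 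The paper remarks explicitly at the start of Section~\ref{fullprocess} that the $L^2$ hypocoercive estimates used for convergence are too weak here because of the factor lost in Cauchy--Schwarz, and that hypercontractivity (used in the elliptic case) is unavailable; this motivates the $H^k$ machinery.
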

If $U$ has a finite number of minima and a unique global minimum, it is easily seen that there  exists a non-global minimum $x$ such that $\mathrm D(x)=c^*$. More generally, since for all $a<\mathrm D(x)$ the minimum value of $U$ over the cup $\mathrm C(\tilde x,a)$ is $U(\tilde x)$, if there exists  a non-global minimum $x$ of $U$ with depth $\mathrm D(x)>c$,  the previous results implies that, with positive probability, $\inf_{t\geqslant0}U(X_t) \geqslant U(\tilde x)>\min U$. Moreover, even if the probability to start in $\mathrm C(\tilde x,c)$ is initially zero, due to the controllability of the process, it is positive for all positive times (see e.g. \cite[proposition 5]{Pierro}). As a conclusion, we immediately get the following:

\begin{corollary}\label{cor:non-conv}
Under Assumption~\ref{hyp2}, for all initial condition and all $t_0>0$,
\[\mathbb P\po U(X_t) \geqslant U(\tilde x)\ \forall t\geqslant t_0\pf>0.\]
\end{corollary}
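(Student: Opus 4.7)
My plan is to reduce the corollary to Theorem~\ref{non} by combining a time-shift with a controllability argument. Fix $t_0>0$ and an arbitrary initial law for $Z_0$. By the remark following Assumption~\ref{hyp}, the shifted process $\tilde Z_s := Z_{t_0+s}$ again satisfies \eqref{eq} under Assumptions~\ref{hyp}--\ref{hyp2}, with $\beta_0$ replaced by $\beta_{t_0}$. Thus Theorem~\ref{non} will apply to $\tilde Z$ as soon as $\mathbb{P}(\tilde X_0\in\mathrm{C}(\tilde x,c))>0$. To secure this, I would invoke the controllability of the kinetic Langevin diffusion (\cite[Proposition 5]{Pierro}), which guarantees that for any $t_0>0$ the law of $Z_{t_0}$ admits a positive density on $\R^{2d}$; since $\mathrm{C}(\tilde x,c)$ contains a neighborhood of $\tilde x$ (by continuity of $U$, any straight-line path from $\tilde x$ to a nearby point keeps $U$ close to $U(\tilde x)$), this yields $\mathbb{P}(X_{t_0}\in\mathrm{C}(\tilde x,c))>0$ irrespective of the law of $Z_0$.

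The second ingredient is the purely topological observation that for any $\delta\in(0,\mathrm{D}(\tilde x)-c)$ — a non-empty interval by Assumption~\ref{hyp2} — one has $\mathrm{C}(\tilde x,c+\delta)\subseteq\{U\geqslant U(\tilde x)\}$. Indeed, any $y\in\mathrm{C}(\tilde x,c+\delta)$ satisfies $\tilde c(\tilde x,y)<c+\delta<\mathrm{D}(\tilde x)$, which by the very definition of $\mathrm{D}(\tilde x)$ in \eqref{depth} forbids $U(y)<U(\tilde x)$.

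Combining these two ingredients, Theorem~\ref{non} applied to $\tilde Z$ with such a $\delta$ yields
\[
\mathbb{P}\po X_t\in\mathrm{C}(\tilde x,c+\delta)\ \forall t\geqslant t_0\pf>0,
\]
and the inclusion above converts this into $\mathbb{P}(U(X_t)\geqslant U(\tilde x)\ \forall t\geqslant t_0)>0$. The whole argument is essentially painless; the one point that requires a little attention is checking that the time-shifted process is genuinely covered by Theorem~\ref{non}, which is precisely what the shift-invariance of Assumption~\ref{hyp} ensures.
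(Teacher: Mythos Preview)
Your proof is correct and follows essentially the same line as the paper's argument: controllability of the kinetic diffusion to ensure $\mathbb{P}(X_{t_0}\in\mathrm{C}(\tilde x,c))>0$, a time-shift to apply Theorem~\ref{non} to $(Z_{t_0+t})_{t\geqslant 0}$, and the observation that $\mathrm{C}(\tilde x,c+\delta)\subset\{U\geqslant U(\tilde x)\}$ for $\delta<\mathrm{D}(\tilde x)-c$. You have simply spelled out the details a bit more carefully than the paragraph preceding the corollary in the paper.
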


Notice that, in practice, one can keep track of $X_{s(t)}$ where \[s(t) = \inf\{w\in[0,t], U(X_w) = \min_{w'\in[0,t]}U(X_{w'})\},\]so that $X_{s(t)}$ may converge to a minimizer of $U$ even if $X_t$ does not. However, our results show that this doesn't solve the issue of non-convergence for fast cooling schedules.

\pierre{
\begin{remark}\label{rem:noncv}
In Figure~\ref{Figure2}, in the case $c=b$ (so that Assumption~\ref{hyp2} do not hold) we cannot deduce from our results that the process stays stuck with positive probability in the cusp  $\mathrm C(x,b)$ because, for any $\delta>0$, $\mathrm C(x,b+\delta)$ contains $y$ (contrary to  $\mathrm C(x,b)$). In fact it is clear that the process can stay stuck with positive probability in  $\mathrm C(x,b+\delta)$ for any $\delta>0$ (so that the conclusion of Corollary~\ref{cor:non-conv} also holds), but we cannot deduce it from our proof which requires that the critical depth within the cusp (i.e. for a suitable modification of the potential which only consider the local situation of this cusp, see Section~\ref{fullprocess} and Figure~\ref{fig:my_label}) is strictly smaller than $c$ (in order to apply a variation of Theorem~\ref{Th}), while it is exactly  $b=c$ in this example. We refer to \cite{Miclo2} where a fine analysis is conducted on a related question on finite graphs. 
\end{remark}
}

\bigskip

Finally we address the case of faster than logarithmic cooling schedules. For simplicity we restrict this study to the case of a constant friction parameter $\gamma$ (although as discussed at the end of  the proof it can be extended to the non-constant case with suitable conditions on $\gamma$ depending on $\beta$). In the following we do not assume that $t\mapsto \beta_t$ is increasing, and possibly $\beta_t=+\infty$ for some $t$.

\begin{theorem}\label{thmFast}
Assume that $\gamma_t=\gamma$ is constant, that $t\in\R_+\mapsto \beta_t\in(0,+\infty]$ is piecewise continuous with $\ln(t)=o(\beta_t)$ as $t\rightarrow +\infty$, that $U\in\mathcal C^\infty(\R^d)$ and that $x_*\in\R^{d}$ is a non-degenerate local minimum of $U$, i.e. $\na U(x_*)=0$ and $\na^2 U(x_*)>0$. Then there exist $C,r>0$ such that, denoting
\[\varepsilon_t =  C \po e^{-rt} + \sup_{s\geqslant t/2} \ln(s) \beta_s^{-1}\pf\,,\]
the following holds. For all $\delta>0$ and all initial condition $z_0=(x_0,y_0)\in\R^{2d}$ with $|x_0-x_*|\leqslant \delta/2$, the solution $Z=(X,Y)$ of \eqref{eq} is such that
\[\mathbb P_{z_0} \po |X_t - x_*| \leqslant \min(\delta,\sqrt{\varepsilon_t}) \ \forall t\geqslant 0\pf >0\,.\]
\end{theorem}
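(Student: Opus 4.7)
\emph{Step 1 (localization).} The plan is to reduce to a uniformly convex problem near $x_*$, then combine a hypocoercive convergence toward the instantaneous Gibbs measure $\pi_{\beta_t}\propto e^{-\beta_t\tilde H}$ (where $\tilde H(x,y)=\tilde U(x)+|y|^2/2$) with the concentration of $\pi_{\beta_t}$ at $(x_*,0)$ as $\beta_t\to\infty$, and finally lift the resulting marginal bound to a pathwise statement. Since $\na^2 U(x_*)>0$, pick $\delta_0\in(0,\delta/2)$ so that $\na^2 U\geqslant \lambda I$ on $B(x_*,2\delta_0)$ for some $\lambda>0$, and construct a $\mathcal{C}^\infty$ potential $\tilde U$ with $\tilde U=U$ on $B(x_*,\delta_0)$, $\na^2\tilde U\geqslant \lambda I$ everywhere, $\tilde U$ quadratic outside a compact set, and $x_*$ its unique global minimizer. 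The solution $\tilde Z=(\tilde X,\tilde Y)$ of \eqref{eq} with $\tilde U$ in place of $U$ coincides with $Z$ until the first time $\tilde X$ leaves $B(x_*,\delta_0)$, so it suffices to prove $\mathbb P(|\tilde X_t-x_*|\leqslant\min(\delta_0,\sqrt{\varepsilon_t})\ \forall t\geqslant 0)>0$.

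\emph{Step 2 (hypocoercive estimate with moving reference).} Let $h_t=\mathrm{d}\mathrm{Law}(\tilde Z_t)/\mathrm{d}\pi_{\beta_t}$. I would use a Villani-type modified $L^2$ functional
\[\mathcal F_\beta(h)=\|h-1\|^2_{L^2(\pi_\beta)}+a\beta^{-2}\|\na_x h\|^2+2b\beta^{-2}\langle \na_x h,\na_y h\rangle+c\beta^{-2}\|\na_y h\|^2,\]
with $0<b^2<ac$. Uniform convexity of $\tilde U$ and constancy of $\gamma$ should yield, via a hypocoercive $\Gamma_2$-type computation, a dissipation inequality of the schematic form
\[\frac{\mathrm d}{\mathrm d t}\mathcal F_{\beta_t}(h_t)\leqslant -r\,\mathcal F_{\beta_t}(h_t)+C\,|\beta_t'|\,\beta_t^{-1}\big(1+\mathcal F_{\beta_t}(h_t)\big),\]
with rate $r>0$ \emph{independent} of $\beta_t$, the extra term tracking $\partial_t\pi_{\beta_t}$ through the Gaussian moments of $\pi_{\beta_t}$ (which are $O(\beta_t^{-1})$). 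A Gronwall argument then yields $\mathcal F_{\beta_t}(h_t)\leqslant C\varepsilon_t$.

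\emph{Step 3 (from moments to paths, and conclusion).} Via Cauchy-Schwarz and the concentration of $\pi_{\beta_t}$ near $(x_*,0)$, the previous bound gives $\mathbb P(|\tilde X_t-x_*|^2\geqslant u\varepsilon_t)\to 0$ with fast decay in $u$ at each fixed $t$. To upgrade this to a uniform-in-$t$ statement, discretize at $t_n=2^n$, invoke the strong Markov property to restart the hypocoercive bound at each $t_n$ from the previous law, and on each $[t_n,t_{n+1}]$ control $\sup_s|\tilde X_s-x_*|$ by $|\tilde X_{t_n}-x_*|$ plus a short-time fluctuation of typical size $(\ln t_n/\beta_{t_n})^{1/2}$ (Bernstein/BDG on the noise, Gronwall on the drift). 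The failure probabilities are summable in $n$, so $\mathbb P(|\tilde X_t-x_*|\leqslant\sqrt{\varepsilon_t}\ \forall t\geqslant T)\to 1$ as $T\to\infty$; a controllability estimate on $[0,T]$ using $|x_0-x_*|\leqslant\delta/2$ (and non-degeneracy of the noise when $\beta_t<\infty$, the case $\beta_t=\infty$ on a subinterval reducing to a deterministic damped Hamiltonian ODE that stays near $(x_*,0)$ by energy decay) then gives positive probability for all $t\geqslant 0$. The main obstacle is Step 2: keeping the rate $r$ uniform in $\beta_t$ while absorbing the drift from $\partial_t\pi_{\beta_t}$ into a functional whose coefficients themselves depend on $\beta_t$ requires the strict convexity of $\tilde U$ (hence the localization) and a careful joint choice of $a,b,c$ and $r$.
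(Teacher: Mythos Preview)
Your approach via hypocoercivity with moving reference $\pi_{\beta_t}$ is genuinely different from the paper's, and under the stated hypotheses it has a real gap. The theorem only assumes $t\mapsto\beta_t\in(0,+\infty]$ is \emph{piecewise continuous} with $\ln(t)=o(\beta_t)$; no differentiability is imposed, and $\beta_t=+\infty$ is explicitly allowed. Your Step~2 dissipation inequality carries the error term $C|\beta_t'|\beta_t^{-1}$, but $\beta_t'$ need not exist, and even when it does you have no control on it: for instance $\beta_t=t^2(2+\sin(e^t))$ satisfies $\ln t=o(\beta_t)$ yet has $|\beta_t'|/\beta_t$ unbounded, so the Gronwall step fails. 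Worse, on any subinterval where $\beta_t=+\infty$ the reference $\pi_{\beta_t}$ degenerates to a Dirac mass and the whole $L^2(\pi_{\beta_t})$/$H^1(\pi_{\beta_t})$ framework is undefined; your parenthetical remark in Step~3 about the deterministic damped ODE does not rescue the functional $\mathcal F_{\beta_t}(h_t)$ from being meaningless on such intervals.

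The paper avoids all of this by a direct and elementary coupling argument. It compares $Z$ with the solution $\tilde Z$ of the \emph{linearized} equation at $x_*$ (quadratic potential $\tilde U(x)=\tfrac12 x\cdot\nabla^2 U(x_*)\,x$), driven by the same Brownian motion. Since $\nabla^2 U(x_*)>0$, the Jacobian of the drift at $(x_*,0)$ is a Hurwitz matrix, so there is a quadratic Lyapunov form $\|\cdot\|_M$ contracting at some rate $r>0$. The difference $Z_t-\tilde Z_t$ then obeys a \emph{noiseless} contractive inequality driven by the Taylor remainder $\|\nabla^3 U\|_\infty|\tilde Z_t|^2$, while $\tilde Z$ is Gaussian with $\partial_t\,\mathbb E\|\tilde Z_t\|_M^2\leqslant -r\,\mathbb E\|\tilde Z_t\|_M^2 + C\beta_t^{-1}$. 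This uses only $\beta_t^{-1}$ (with $1/\infty=0$) and needs no regularity of $\beta$ whatsoever. A Gaussian tail bound makes $t\mapsto\mathbb P(|\tilde Z_t|\geqslant\alpha_t)$ integrable for $\alpha_t^2\sim\sup_{s\geqslant t}\ln(s)\beta_s^{-1}$, and the passage from an integrable marginal bound to a pathwise statement reuses verbatim the It\^o/Doob up-crossing trick already developed for Theorem~\ref{non}, rather than your dyadic discretization with BDG estimates. In short, the linearization-plus-Gaussian route is both simpler and robust to the minimal hypotheses on $\beta$; your hypocoercive scheme would require at least that $\beta$ be finite and absolutely continuous with $|\beta_t'|/\beta_t$ suitably controlled, which is strictly stronger than what is assumed.
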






\medskip

The rest of the paper is dedicated to the proofs of Theorems~\ref{Th}, \ref{non} and \ref{thmFast}. It is organised as follows. The main step of the proofs in the logarithmic case are exposed in Section~\ref{main}, while technical intermediary results are postponed to Sections~\ref{sec:auxiliary}, \ref{hypo1} and \ref{hypo2}. More precisely, Section~\ref{EnergyBounds} is dedicated to the proof of a uniform in time energy bound, which is the main ingredient in the proof that  the process goes back infinitely many times to a compact set. A result of small-time conditional regularization is proven in Section~\ref{sec:regular}, which is used to replace deterministic initial conditions by smooth distributions (with some quantitative bounds).  Section~\ref{hypo1} presents hypocoercivity estimates  similar to those of \cite{Pierro}, which are used to prove the convergence of the algorithm in the slow cooling case. In the fast logarithmic cooling case, similar estimates have to be established in higher order Sobolev norms, which is the topic of Section~\ref{hypo2}. Section~\ref{SecFast} is dedicated to the faster than logarithmic case, with the proof of Theorem~\ref{thmFast}.


\section{Main steps of the proofs}\label{main}

The sketch of the proofs  is the following.

 The  first point is to get uniform in time moment estimates. This would classically be done using Lyapunov arguments, but this would typically require some assumption on $\na U$, which we want to avoid. We adapt an argument from \cite{FourTar}. From these estimates, we get that $\liminf_{t\rightarrow +\infty} H(Z_t)$ is almost surely finite, i.e. there exists a (random) compact set $\{H \leqslant A\}$ which will be visited infinitely often by the process. 
 
 The second step is to prove that, for any $A>0$, there exists $A'>A$ such that, provided the process is in $\{H\leqslant A\}$ at some time $t_0$, there is a probability at least, say, $1/4$, that the process remains in $\{H\leqslant A'\}$ for all times $t\geqslant t_0$. This is reminiscent of the study in \cite{HoStKu} of fast cooling schedules,  where it is proven that there is a positive probability that, starting in a potential well of depth larger than $c$, the process never climbs high enough to exit the well (as in Theorem~\ref{non}). We will follow a similar proof, except that we have to check the dependency of the estimates with respect to $t_0$ or, equivalently by taking $t_0=0$, to $\beta_0$. That way, we will conclude that, each time the process goes below $A$, it has a probability $1/4$ to never go above $A'$ again so that, if it goes below $A$ infinitely often, eventually it will stay below $A'$. 

Combining the two previous steps, we get that the process is almost surely bounded. It is thus sufficient to prove the convergence of the process when the position space is a compact torus, which is then similar to \cite{Pierro}, but without the issue of the behavior of $U$ at infinity.

The strategy for the non-convergence in the fast cooling case is similar to \cite{HoStKu}, the technical difficulties coming from the degeneracy of the process. Indeed the estimates used in \cite{Pierro} in the kinetic case to prove convergence are too weak to get that the process has a positive probability to stay forever below some energy level. On the other hand the estimates of \cite{HoStKu} are based on hypercontractivity of elliptic diffusions on compact manifold. We are not aware of similar results for hypoelliptic diffusions, and thus we overcome this difficulty by working with higher Sobolev norms. 

The faster than logarithmic case is similar and somehow simpler: in that case the process has a positive probability to converge to $(x_*,0)$, it is thus sufficient to linearize $\nabla U$ at this point and to study the corresponding Gaussian process.

\subsection{Return to a compact set}\label{return}

Diffusions defined by \eqref{eq} are time-inhomogeneous Markov processes with generator :
\begin{equation}\label{gen}
L_{t} = y\cdot \nabla_x - (\gamma_t y + \nabla_xU)\cdot \nabla_y + \gamma_t \beta_t^{-1}\Delta_y. 
\end{equation}
First, let us check that  the process is well-defined.

\begin{proposition}\label{defined}
Under Assumption~\ref{hyp}, let $z_0 \in\R^{2d}$. There exists a unique process $Z=(X,Y)$ that solves ~\eqref{eq} with $Z_0=z_0$. It is non-explosive (i.e. defined for all $t\geqslant 0$) and, for all $t\geqslant 0$, $\E(H(Z_t))\leqslant H(z_0) + dLt$.
\end{proposition}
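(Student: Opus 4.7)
The plan is a standard Lyapunov/stopping-time argument, using $H$ itself as the Lyapunov function.

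First, since $U\in\mathcal{C}^\infty$ and $\gamma,\beta$ are $\mathcal{C}^1$, the drift and diffusion coefficients in \eqref{eq} are locally Lipschitz in the space variable and continuous in time. Classical SDE theory therefore gives pathwise existence and uniqueness of a solution $(Z_t)$ up to an explosion time $\tau_\infty=\lim_n \tau_n$, where $\tau_n=\inf\{t\geqslant 0:\ |Z_t|\geqslant n\}$.

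Second, I compute $L_t H$ using \eqref{gen}. Since $\nabla_x H = \nabla U(x)$, $\nabla_y H = y$ and $\Delta_y H = d$, the cross terms cancel and I get
\[
L_t H(x,y) = y\cdot\nabla U(x) - (\gamma_t y+\nabla U(x))\cdot y + \gamma_t\beta_t^{-1}d = -\gamma_t|y|^2 + \gamma_t\beta_t^{-1}d \,\leqslant\, dL,
\]
where I used $\gamma_t \leqslant L\beta_t$ from Assumption~\ref{hyp}. Applying Itô's formula to $H(Z_{t\wedge\tau_n})$ and taking expectation (the local martingale part is a true martingale up to $\tau_n$ since $\nabla_y H = y$ is bounded on $\{|Z|\leqslant n\}$), this yields
\[
\E\bigl(H(Z_{t\wedge\tau_n})\bigr) \,\leqslant\, H(z_0) + dL\,\E(t\wedge\tau_n)\, \leqslant\, H(z_0)+dLt.
\]

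Third, to rule out explosion, I use the coercivity of $H$. Because $U\geqslant 0$ and $U(x)\to+\infty$ as $|x|\to\infty$, the function $H$ is coercive on $\R^{2d}$, so $m_n:=\inf_{|z|\geqslant n} H(z)\to+\infty$. On $\{\tau_n\leqslant t\}$ we have $|Z_{\tau_n}|= n$, hence $H(Z_{t\wedge\tau_n})\geqslant m_n\mathbf 1_{\tau_n\leqslant t}$, and Markov's inequality combined with the previous display gives
\[
\pro{\tau_n\leqslant t} \,\leqslant\, \frac{H(z_0)+dLt}{m_n} \xrightarrow[n\to\infty]{} 0.
\]
So $\tau_\infty=+\infty$ almost surely and the process is globally defined. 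Finally, by Fatou's lemma,
\[
\E\bigl(H(Z_t)\bigr) \,\leqslant\, \liminf_{n\to\infty}\E\bigl(H(Z_{t\wedge\tau_n})\bigr) \,\leqslant\, H(z_0)+dLt.
\]

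There is no real obstacle here — everything reduces to the fortunate cancellation $y\cdot\nabla U - \nabla U\cdot y=0$ in $L_t H$, which makes $H$ a near-supermartingale with constant drift $dL$. The only points that need a little care are that the Itô term is a genuine martingale only after stopping at $\tau_n$, and that the bound on $\gamma_t/\beta_t$ must be used to get a constant (not time-dependent) upper bound on $L_t H$.
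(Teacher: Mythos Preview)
Your proof is correct and follows essentially the same Lyapunov argument as the paper: compute $L_t H \leqslant dL$, stop the process, apply Markov's inequality to rule out explosion, and recover the moment bound. The only cosmetic difference is that the paper stops at level sets of $H$ (taking $\tau_N=\inf\{t\geqslant 0: H(Z_t)\geqslant N\}$) rather than at Euclidean balls, which spares the explicit appeal to coercivity of $H$; your explicit use of Fatou's lemma for the final bound is a nice touch that the paper leaves implicit.
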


\begin{proof}
Since the coefficients of the diffusion \eqref{eq} are all smooth, there is existence and uniqueness until a time  $\xi$ of explosion. For all $x,y\in\R^d$ and $t\geqslant 0$,
\[L_{t}H(x,y) 
= \gamma_t\beta_t^{-1}d-\gamma_t|y|^2 \leqslant Ld.\]
Considering for $N\in\N$ the stopping time $\tau_N = \inf\left\{t\geqslant0;H(Z_t)\geqslant N\right\}$, we get
\[\E(H(Z_{\tau_N\wedge t})) \leqslant H(z_0) + dLt\]
which implies
\[\mathbb{P}(\tau_N <t) = \mathbb{P}(H(Z_{\tau_N\wedge t})\geqslant N) \leqslant \frac{H(z_0) + Ldt}{N} \underset{N\rightarrow\infty}\longrightarrow 0\,,\]
hence $\xi<+\infty$ almost surely, which concludes. 
\end{proof}


The first main point  is to  to strengthen the energy bound of Proposition~\ref{defined} to a uniform in time estimate.  For $f$ a probability law or density on $\R^{2d}$, we write $\mathbb E_{f}$ and $\mathbb P_f$ expectations and probabilities with respect to the process $Z$ solving \eqref{eq} with initial condition $Z_0$ distributed according to $f$. If $f=\delta_z$ for some $z\in\R^{2d}$, we write $\E_z$ and $\mathbb{P}_z$ instead.


\begin{lemma}\label{energybounds}
Under Assumption~\ref{hyp}, there exists $b> \alpha_0 $ that depends only on $U$  such that the following holds. Provided $\beta_0 \geqslant b$ then, for any $\mathcal C^\infty$ probability density $f_0$ with compact support, 
\[\sup_{t\geqslant0} \E_{f_0}\left(H(Z_t)\right)\leqslant \frac{\kappa^{\beta_0}(f_0)+\ln(\mathcal{Z}_{\alpha_0})}{\beta_0-\alpha_0}\,,\]
where \[\kappa^{\beta_0}(f_0)=\int_{\R^{2d}} f_0\ln \left(1+f_0e^{\beta_0H}\right)\,,\qquad \mathcal{Z}_{\alpha_0}=\int_{\R^{2d}} e^{-\alpha_0H} .\]
\end{lemma}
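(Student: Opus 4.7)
The plan is to introduce a modified entropy functional that both bounds $\E H(Z_t)$ from below and is controllable along the flow, following the Fournier--Tardif strategy of \cite{FourTar} adapted to the hypoelliptic kinetic process \eqref{eq}. Write $f_t$ for the law of $Z_t$, set $g_t := f_t e^{\beta_t H}$ (the density against the unnormalized Gibbs measure $\mu_{\beta_t}(\dd z):=e^{-\beta_t H(z)}\dd z$), and $\psi(u):=u\ln(1+u)$, so that $J_t:=\kappa^{\beta_t}(f_t) = \int \psi(g_t)\,\dd\mu_{\beta_t}$. The two pillars of the proof are \textbf{(A)} a lower bound on $J_t$ in terms of $\E H(Z_t)$ and \textbf{(B)} a Gr\"onwall-type inequality for $J_t$ itself.

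\textbf{Pillar (A).} Since $\ln(1+u)\geqslant \ln u$ for all $u>0$, I get
\[
J_t \geqslant \int f_t(\ln f_t + \beta_t H)\,\dd z = \int f_t\ln f_t\,\dd z + \beta_t\,\E H(Z_t).
\]
The Gibbs variational inequality applied with the probability measure $\mathcal Z_{\alpha_0}^{-1}e^{-\alpha_0 H}\,\dd z$ (well-defined by Assumption~\ref{hyp}) then gives $\int f_t\ln f_t\,\dd z \geqslant -\alpha_0\,\E H(Z_t)-\ln\mathcal Z_{\alpha_0}$, so  $(\beta_t-\alpha_0)\,\E H(Z_t)\leqslant J_t+\ln\mathcal Z_{\alpha_0}$.

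\textbf{Pillar (B).} Decompose the generator $L_t = S_t+A_t$ with $S_t=-\gamma_t y\cdot\na_y+\gamma_t\beta_t^{-1}\Delta_y$ symmetric w.r.t.\ $\mu_{\beta_t}$, and the Hamiltonian part $A_t=y\cdot\na_x-\na U\cdot\na_y$ antisymmetric. A short computation shows $\partial_t g_t=(S_t-A_t)g_t+\dot\beta_t H g_t$, and differentiating $J_t$ then yields
\[
\frac{\dd J_t}{\dd t} = \int \psi'(g_t)(S_t-A_t)g_t\,\dd\mu_{\beta_t} + \dot\beta_t\int\bigl(g_t\psi'(g_t)-\psi(g_t)\bigr)H\,\dd\mu_{\beta_t}.
\]
The antisymmetric contribution vanishes because $\int \psi'(g_t)A_tg_t\,\dd\mu_{\beta_t}=\int A_t\psi(g_t)\,\dd\mu_{\beta_t}=0$; integration by parts in $y$ turns the symmetric contribution into the manifestly non-positive term $-\gamma_t\beta_t^{-1}\int\psi''(g_t)|\na_y g_t|^2\,\dd\mu_{\beta_t}$. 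Using the identity $u\psi'(u)-\psi(u)=u^2/(1+u)\leqslant u$ bounds the remaining term by $\dot\beta_t\int H g_t\,\dd\mu_{\beta_t}=\dot\beta_t\,\E H(Z_t)$. Inserting (A) into this and setting $K_t:=J_t+\ln\mathcal Z_{\alpha_0}$ produces $\dot K_t\leqslant \dot\beta_t K_t/(\beta_t-\alpha_0)$, and since $\int_0^t \dot\beta_s/(\beta_s-\alpha_0)\,\dd s=\ln\bigl((\beta_t-\alpha_0)/(\beta_0-\alpha_0)\bigr)$, Gr\"onwall yields $K_t\leqslant K_0(\beta_t-\alpha_0)/(\beta_0-\alpha_0)$. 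Plugged back into (A),
\[
\E H(Z_t)\leqslant \frac{K_t}{\beta_t-\alpha_0}\leqslant \frac{K_0}{\beta_0-\alpha_0}=\frac{\kappa^{\beta_0}(f_0)+\ln\mathcal Z_{\alpha_0}}{\beta_0-\alpha_0},
\]
which is exactly the announced bound.

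\textbf{Main obstacle.} Everything above is formal. Hypoellipticity (H\"ormander's theorem) yields the smoothness of $f_t$ for $t>0$, but to justify differentiation under the integral sign and the integration by parts in $y$ we need enough tail control on $g_t = f_t e^{\beta_t H}$, which grows wildly at infinity. This is precisely the technical point flagged in Remark~\ref{rem:err_truncation} and addressed in Section~\ref{hypo1}: the naive truncation from \cite{Pierro} was incorrect and has to be replaced by a careful approximation, presumably a smooth cutoff $\chi_N$ in phase space combined with a saturated version $\psi_N$ of $\psi$ with bounded derivatives; one then runs the chain of inequalities above at the approximated level and passes to the limit via monotone/dominated convergence. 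I expect the constraint $\beta_0\geqslant b$ with $b>\alpha_0$ to arise exactly here, as a uniform-in-$N$ safety margin guaranteeing that the cutoff-generated error terms are controlled independently of the truncation parameter.
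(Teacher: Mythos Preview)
Your formal computation---the functional $J_t=\int g_t\ln(1+g_t)\,\dd\mu_{\beta_t}$, the lower bound via the Gibbs variational principle, the differentiation yielding a non-positive dissipative term plus $\dot\beta_t\,\E H(Z_t)$, and the closing Gr\"onwall---is exactly the paper's argument. The paper phrases the dissipation step via the carr\'e du champ and the diffusion property rather than your $S_t+A_t$ split, but the content is identical.

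There is, however, a genuine gap in your rigorous justification sketch. You invoke H\"ormander's theorem for the smoothness of $f_t$, but under Assumption~\ref{hyp} alone $\nabla U$ is not bounded, so the standard hypoelliptic regularity does not apply directly and the truncation argument you outline cannot even get started. The paper handles this by a \emph{two-layer} approximation: first it replaces $U$ by a potential $U_n$ equal to $U$ on $B(0,n)$ and to $|x|$ outside $B(0,n+1)$, proves the full estimate for the process associated to $U_n$ (where all coefficients are nice), and only then passes to the limit $n\to\infty$ via Fatou. This outer layer is missing from your proposal.

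Two smaller corrections. First, the truncation the paper actually performs is just the spatial cutoff $\eta_m$ of Proposition~\ref{approx1} (no saturation of $\psi$ is needed); the key estimate is $L_t\eta_m\leqslant C\beta_t/m$, which makes all boundary terms vanish as $m\to\infty$. Second, your guess that the threshold $\beta_0\geqslant b$ arises from the truncation is off: the $\eta_m$-error terms go to zero for any $\beta_0>\alpha_0$, and in the paper's proof the only role of the threshold is to ensure $\beta_0-\alpha_0>0$ so that the division and the Gr\"onwall step make sense. (You also point to Remark~\ref{rem:err_truncation} and Section~\ref{hypo1}, but that remark concerns the hypocoercive $H^1$ estimate of Proposition~\ref{evol0}, a different truncation issue; the present lemma lives in Section~\ref{EnergyBounds}.)
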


The proof is postponed to Section~\ref{EnergyBounds}.  The next result enables the use of Lemma~\ref{energybounds} when the initial condition is not smooth. 

\begin{lemma}\label{changcondinit}
Under Assumption~\ref{hyp} with $c>c^*$, fix $A>1$ and $\varepsilon>0$. Then there exist $t^*,b_A,C^1_A,C>0$ that do not depend on $\beta_0$ such that, for all $\beta_0 \geqslant b_A$ and $z_0\in \R^{2d}$ with $H(z_0) \leqslant A$, the following holds. Writing $\B=\left\{\sup_{t\leqslant t^*}H(Z_t)\leqslant A+1 \right\}$,
\[\mathbb{P}_{z_0}(\B)\geqslant 1-\varepsilon.\]
Moreover, the law at time $t^*$ of the process \eqref{eq}  with initial condition $Z_0=z_0$ conditioned on the event $\B$ has a density $f^c_{t^*}$ that satisfies
\[f^c_{t^*}\leqslant Ce^{C^1_{A}\beta_0}\mathbbm{1}_{\left\{H \leqslant A + 1\right\}}.\]
\end{lemma}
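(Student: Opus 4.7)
The plan splits into the probability estimate $\mathbb P_{z_0}(\mathcal B)\geqslant 1-\varepsilon$ and the density bound. For the first, applying It\^o's formula to $H(Z_t)$ under~\eqref{eq} yields
\[\dd H(Z_t) = \left(\gamma_t\beta_t^{-1}d - \gamma_t|Y_t|^2\right)\dd t + \dd M_t, \qquad M_t := \int_0^t\sqrt{2\gamma_s\beta_s^{-1}}\,Y_s\cdot\dd B_s.\]
The drift is bounded above by $Ld$. Introducing the stopping time $\tau := \inf\{t\geqslant 0 : H(Z_t)\geqslant A+1\}$, on $[0,\tau]$ one has $|Y_s|^2\leqslant 2(A+1)$, so $[M]_{t^*\wedge\tau}\leqslant 4L(A+1)t^*$, and Doob's $L^2$-maximal inequality gives $\mathbb P(\sup_{s\leqslant t^*\wedge\tau}M_s\geqslant 1/2)\leqslant 16L(A+1)t^*$. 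Choosing $t^*$ small enough that both $Ldt^* < 1/2$ and $16L(A+1)t^*\leqslant\varepsilon$ forces $\{\sup_{s\leqslant t^*\wedge\tau} M_s < 1/2\}\subseteq\mathcal B$ (since then $H(Z_s) < A+1$ throughout $[0,t^*\wedge\tau]$, forbidding $\tau\leqslant t^*$), hence $\mathbb P(\mathcal B)\geqslant 1-\varepsilon$.

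For the density bound, write $f^c_{t^*} = \mathbb P(\mathcal B)^{-1}p^{\mathcal B}_{t^*}(z_0,\cdot)$ where $p^{\mathcal B}_{t^*}(z_0,z)\dd z := \mathbb P_{z_0}(Z_{t^*}\in\dd z,\mathcal B)$ is automatically supported in $\{H\leqslant A+1\}$. Since $\mathbb P(\mathcal B)\geqslant 1-\varepsilon$, the target reduces to $p^{\mathcal B}_{t^*}(z_0,z)\leqslant Ce^{C^1_A\beta_0}$. I replace $U$ outside $\{U\leqslant A+1\}$ by a smooth modification $\tilde U$ with $|\nabla\tilde U|$ globally bounded by $K_A := \sup_{\{U\leqslant A+1\}}|\nabla U|+1$, and let $\hat Z$ be the solution of \eqref{eq} with $U$ replaced by $\tilde U$, driven by the same Brownian motion. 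On $\mathcal B$ the paths of $Z$ and $\hat Z$ coincide, so $p^{\mathcal B}_{t^*}(z_0,\cdot)\leqslant\hat p_{t^*}(z_0,\cdot)$, the full (hypoelliptic) transition density of $\hat Z$ at time $t^*$.

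To control $\hat p$, I compare $\hat{\mathbb P}$ with $\bar{\mathbb P}$, the law of the free kinetic Langevin process (obtained by dropping $-\nabla\tilde U$ from the $Y$-equation), which is explicitly Gaussian with covariance determinant of order $(t^*)^{4d}/\beta_0^{2d}$ and hence density satisfying $\|\bar p_{t^*}(z_0,\cdot)\|_\infty\leqslant C\beta_0^d/(t^*)^{2d}$. The Girsanov derivative $\dd\hat{\mathbb P}/\dd\bar{\mathbb P} = M_{t^*}$ has integrand $|\theta_s|^2 = \beta_s|\nabla\tilde U(X_s)|^2/(2\gamma_s)\leqslant C_A\beta_0$ (using $\gamma_s\geqslant\kappa$ and, for $\beta_0\geqslant b_A$, $\beta_{t^*}\leqslant\beta_0+1$), so Novikov is satisfied and $\bar{\mathbb E}[M_{t^*}^2]\leqslant e^{C_A\beta_0}$. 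A Cauchy-Schwarz argument then yields $\|\hat P_{0,t^*/2}\|_{L^2\to L^\infty}\leqslant\sqrt{\|\bar p\|_\infty}\,e^{C_A\beta_0/2}$; the analogous argument on the time-reversed half $[t^*/2,t^*]$ produces a matching $L^1\to L^2$ bound; and Chapman-Kolmogorov combines these into $\hat p_{t^*}(z_0,z)\leqslant C\beta_0^d e^{C_A\beta_0}\leqslant C'e^{C^1_A\beta_0}$ after absorbing the polynomial factor. The main technical obstacle is this $L^1\to L^2$ bound on the second half-interval, which is dual to an $L^2\to L^\infty$ bound for the time-reversed kinetic Langevin semigroup; since the reversed free process is again linear Gaussian with the same density estimate by covariance symmetry, and $\nabla\tilde U$ remains bounded, the Girsanov-plus-Cauchy-Schwarz scheme goes through symmetrically. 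Alternatively, Kolmogorov-style small-time heat kernel bounds for the hypoelliptic Fokker-Planck operator give the required estimate directly.
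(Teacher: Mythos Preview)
Your argument for $\mathbb P_{z_0}(\mathcal B)\geqslant 1-\varepsilon$ is correct and in fact slightly cleaner than the paper's: the paper works with the explicit variation-of-constants formula for $Y_t$ and bounds $\sup_t|Y_t-y_0|$ and $\sup_t|U(X_t)-U(x_0)|$ separately, whereas your It\^o-on-$H$ approach with the stopping time $\tau$ handles both at once.

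For the density bound, however, your route diverges substantially from the paper's and leaves the hardest step only sketched. The paper also uses Girsanov, but with two differences. First, its free process drops both $\nabla U$ and the friction $\gamma_t Y_t$, so $(\bar X,\bar Y)$ is a pure integrated noise. Second, and crucially, instead of bounding a moment $\bar{\mathbb E}[M_{t^*}^2]$ of the Girsanov density, the paper bounds the Girsanov exponent $N_{t^*}$ \emph{pathwise on the event $\mathcal B$}. This is achieved by an It\^o formula applied to $\frac{\beta_t}{2\gamma_t}\bar Y_t\cdot\bigl(\nabla U(\bar X_t)+\tfrac{\gamma_t}{2}\bar Y_t\bigr)$, which expresses the stochastic integral $N_t$ as a boundary term plus a Lebesgue integral, all of which are bounded by $C(\beta_0+1)$ once $H(\bar X_s,\bar Y_s)\leqslant A+1$. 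Since $\langle N\rangle_{t^*}\geqslant 0$, this yields directly $\mathbb E[\phi(Z_{t^*})\mathbbm{1}_{\mathcal B}]\leqslant Ce^{C^1_A\beta_0}\mathbb E[\phi(\bar Z_{t^*})\mathbbm{1}_{\mathcal B}]$ for every $\phi\geqslant 0$, and the Gaussian density of $\bar Z_{t^*}$ is then bounded by an explicit covariance determinant computation. No semigroup splitting, no duality, no time reversal is needed. Your scheme, by contrast, requires the $L^1\to L^2$ bound on $\hat P_{t^*/2,t^*}$, which you correctly identify as the main obstacle and reduce to an $L^2\to L^\infty$ bound for the adjoint; but the adjoint here is the forward density evolution, and making the Girsanov-plus-Cauchy--Schwarz argument work for it is not the symmetric statement you suggest (the Radon--Nikodym derivative is path-dependent, not just endpoint-dependent). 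The alternative you mention---quantitative hypoelliptic heat-kernel bounds---would work but is a nontrivial import. The paper's pathwise-on-$\mathcal B$ trick sidesteps all of this in two lines.
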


This is proven in Section~\ref{sec:regular}. The two previous lemmas yield the following.

\begin{proposition}\label{retourcompact}
Under Assumption~\ref{hyp}, $\liminf_{t\rightarrow\infty} H(Z_t)<\infty $ almost surely.
\end{proposition}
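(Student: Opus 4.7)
The plan is to chain Lemmas~\ref{changcondinit} and \ref{energybounds} and conclude by Fatou's lemma.

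First, I would reduce to a convenient setting. The event $\{\liminf_{t\to\infty}H(Z_t)<\infty\}$ is insensitive to the removal of any finite initial segment. By the time-shift property noted after Assumption~\ref{hyp}, for any $t_0\geqslant0$ the shifted process $(Z_{t_0+t})_{t\geqslant 0}$ satisfies Assumption~\ref{hyp} with $\beta_0$ replaced by $\beta_{t_0}$, and $\beta_{t_0}\to\infty$; hence I may assume $\beta_0$ is as large as needed. By disintegrating with respect to the law of $Z_0$, I may also assume the initial condition $Z_0=z_0$ is deterministic. I then fix $A>H(z_0)+1$ and, using the previous remark, take $\beta_0$ large enough to satisfy both $\beta_0\geqslant b$ from Lemma~\ref{energybounds} and $\beta_0\geqslant b_A$ from Lemma~\ref{changcondinit}.

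Next, given $\varepsilon>0$, I apply Lemma~\ref{changcondinit} with this $A$ and $\varepsilon$ to obtain an event $\mathcal B$ with $\mathbb P_{z_0}(\mathcal B)\geqslant 1-\varepsilon$ such that the law of $Z_{t^*}$ conditional on $\mathcal B$ admits a density $f^c_{t^*}$ bounded by $Ce^{C^1_A\beta_0}$ and supported in the compact set $\{H\leqslant A+1\}$; in particular $\kappa^{\beta_{t^*}}(f^c_{t^*})<\infty$. Viewing the process from time $t^*$ onward as a solution of \eqref{eq} with initial law $f^c_{t^*}$ and cooling schedule starting at $\beta_{t^*}\geqslant b$, Lemma~\ref{energybounds} (applied, if necessary, after smooth approximation of $f^c_{t^*}$) yields
\[\sup_{t\geqslant t^*}\mathbb E_{z_0}\bigl(H(Z_t)\,\bigm|\,\mathcal B\bigr)\;\leqslant\;\frac{\kappa^{\beta_{t^*}}(f^c_{t^*})+\ln\mathcal Z_{\alpha_0}}{\beta_{t^*}-\alpha_0}\;=:\;M_\varepsilon\;<\;\infty.\]
Fatou's lemma applied to this conditional expectation gives $\mathbb E_{z_0}(\liminf_{t\to\infty}H(Z_t)\mid\mathcal B)\leqslant M_\varepsilon$, so $\liminf_{t\to\infty}H(Z_t)<\infty$ almost surely on $\mathcal B$. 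Therefore $\mathbb P_{z_0}(\liminf_{t\to\infty}H(Z_t)<\infty)\geqslant 1-\varepsilon$, and sending $\varepsilon\to 0$ concludes.

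The main technical obstacle I anticipate is applying Lemma~\ref{energybounds} to the density $f^c_{t^*}$, which is only guaranteed to be bounded and compactly supported rather than $\mathcal C^\infty$. Since the right-hand side depends on $f^c_{t^*}$ only through $\kappa^{\beta_{t^*}}(f^c_{t^*})$, and this quantity is easily controlled by $\|f^c_{t^*}\|_\infty$ and the volume of its support, one should be able to approximate by smooth compactly supported densities and pass to the limit, but care is required to verify that the estimates are stable under this approximation. A secondary point is that Lemma~\ref{changcondinit} is stated under the extra hypothesis $c>c^*$ whereas Proposition~\ref{retourcompact} is not, so one should check that only the short-time hypoelliptic regularization content of Lemma~\ref{changcondinit} is needed here and that this part does not actually rely on the slow-cooling assumption.
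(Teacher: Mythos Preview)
Your proof is correct and follows the same architecture as the paper's: reduce to large $\beta_0$ and deterministic $z_0$, apply Lemma~\ref{changcondinit} to obtain a bounded compactly supported conditional density at time $t^*$, feed this into Lemma~\ref{energybounds}, and conclude by Fatou. The one substantive difference is how you handle the smoothness hypothesis of Lemma~\ref{energybounds}. You propose approximating $f^c_{t^*}$ by smooth compactly supported densities and passing to the limit; this works (the bound $\E_z(H(Z_t))\leqslant H(z)+dLt$ from Proposition~\ref{defined} gives the needed control on a compact support), but it is heavier than necessary. The paper instead uses a domination trick: pick a \emph{fixed} smooth compactly supported probability density $f_0$ and a constant $C$ with $f^c_{t^*}\leqslant Cf_0$, and then, since $H\geqslant 0$, simply bound $\E_{z_0}(H(Z_{t^*+t})\mid\mathcal B)=\E_{f^c_{t^*}}(H(Z_t))\leqslant C\,\E_{f_0}(H(Z_t))$ and apply Lemma~\ref{energybounds} directly to $f_0$. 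This avoids any limiting procedure.

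Your secondary observation is also on target: the hypothesis $c>c^*$ appearing in the statement of Lemma~\ref{changcondinit} is not actually used in its proof (Section~\ref{sec:regular}), so its invocation here under Assumption~\ref{hyp} alone is legitimate.
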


\begin{proof}
For $t$ large enough, $\beta_{t}\geqslant b$ where $b>\alpha_0$ is the constant from Lemma~\ref{energybounds}. Notice that, for $t_0\geqslant 0$, $(Z_{t_0+t})_{t\geqslant0}$ solves \eqref{eq} except that $\beta_0$ is replaced by $\beta_{t_0}$ and $\gamma$ is also time-shifted. Hence, by the Markov property, without loss of generality, we can assume that  $\beta_0\geqslant b$. Moreover, by conditioning with respect to the initial condition, it is sufficient to consider the case of a deterministic initial condition $z_0\in\R^{2d}$.

Fix $\varepsilon>0$ and $A=H(z_0)$. From Lemma~\ref{changcondinit}, there exist $t^*,C'>0$  such that $\mathbb P_{z_0}(\mathcal{B})\geqslant1-\varepsilon$ where $\B=\left\{\sup_{t\leqslant t^*}H(Z_t)\leqslant A+1 \right\}$, and such that
 the law of the process at time $t^*$ and conditioned on $\mathcal{B}$  has a density $\tilde{f}_{t^*}\leqslant C'\mathbbm{1}_{\{H\leqslant A+1\}}$. Let  $C>0$  and $f_0$ be  a $\mathcal{C}^{\infty}$ probability density on $\R^{2d}$ with compact support  such that $Cf_0 \geqslant C'\mathbbm{1}_{\{H\leqslant A+1\}}$. Then, denoting by $(\mathcal F_t)_{t\geqslant 0}$ the filtration associated with $(Z_t)_{t\geqslant 0}$, by the strong Markov property and Lemma~\ref{energybounds}, for all $t\geqslant 0$, 
\[\E_{z_0}(H(Z_{t^*+t})|\B) = \E_{z_0}(\E(H(Z_{t^*+t})|\mathcal{F}_{t^*})|\B) \leqslant C\E_{f_0}(H(Z_{t}))\leqslant C\frac{\kappa^{\beta_{0}}(f_0)+\ln(\mathcal{Z}_{\alpha_0})}{\beta_{0}-\alpha_0}.\]
As a consequence, by Fatou's Lemma,
 \[ \E_{z_0}\left(\liminf_{t\rightarrow\infty} H(Z_t)|\B\right)\leqslant \sup_{t\geqslant 0} \E_{z_0}(H(Z_t)|\B) < \infty. \]
Finally
 \[\mathbb{P}_{z_0}\left(\liminf_{t\rightarrow\infty} H(Z_t)=\infty\right) \leqslant \mathbb{P}_{z_0}(\B^c) + \mathbb{P}_{z_0}\left(\liminf_{t\rightarrow\infty} H(Z_t)=\infty|\B\right) \leqslant \varepsilon\,,\] 
 which concludes since $\varepsilon$ is arbitrary.
\end{proof}

\subsection{Position in a compact set}\label{posicompset}

As in \cite{FourTar}, with Proposition~\ref{retourcompact} at hand, we can now focus on the behavior of the process when the position is in a compact set, ignoring the behavior of $U$ at infinity.  To this end, we fix some parameter $K>0$, let $L_K>0$  be such that $\left\{U\leqslant K\right\}\subset\left[-(L_K-1),(L_K-1)\right]^d$ and consider the torus $M_K=(\R/2L_K\Z)^d$ \pierre{(i.e. we consider periodic boundary conditions, which will be technically simpler than e.g.  reflecting  boundary conditions)}.  We now define a process $(X^K_t,Y^K_t)$ on $M_K\times\R^d$ and a potential $U^K:M_K\mapsto\R$ to replace the initial one.

More precisely, write $\theta_K:\R^d\mapsto M_K$ the canonical projection and $\tilde{M}_K=\left]-L_K,L_K\right]^d$, so that  $M_K=\theta_K(\tilde{M}_K)$. For a non-negative function $V:\R^d\rightarrow\R$,  we define the critical height $c^*(V)$ with the same definition as $c^*$ except that $U$ is replaced by $V$.

 Let $\tilde{U}^K\in\C^{\infty}(\R^d)$ be equal to $U$ on \lj{some open set $O_K$ containing} $\left\{U\leqslant K\right\}$, non-negative, $2L_K$-periodic, and such that $c^*_K = c^*(\tilde{U}^K) \leqslant c^*$. Such a function exists from \cite[Notation 9]{FourTar}. We write  $U^K$ the corresponding function on $M_K$, given by  $U^K(\theta_K(x))=\tilde{U}^K(x)$, and $H_K(x,y)=U^K(x)+|y|^2/2$. 

Finally, given the same Brownian motion as in \eqref{eq}, we consider $Z^K=(X^K,Y^K)$ the process on $M_K\times\R^d$ that solves 
\begin{align}
  \left\{
      \begin{aligned}\label{eq1}
        &\dd X^K_t = Y^K_t\dd t\\    
        &\dd Y^K_t = -\nabla_xU^K(X_t)\dd t - \gamma_t Y^K_t\dd t + \sqrt{2\gamma_t\beta_t^{-1}}\dd B_t\,,
      \end{aligned}
    \right. 
\end{align}
with $(X_0^K,Y_0^K)=(\theta_K(X_0),Y_0)$. Write \lj{$\tau_K=\inf\left\{t\geqslant0,\ X_t\notin O_K\right\}$}. Then, by design,
\begin{equation}
\label{eq:egalte_proc}
(U^K(X^K_t),Y_t^K)_{t\leqslant \tau_K} =(U(X_t),Y_t)_{t\leqslant \tau_K}.
\end{equation}
For $\beta>0$,  write $\mu^K_{\beta}$ the probability \pierre{measure} on $M_K\times\R^d$ with density proportional to $e^{-\beta H_K} $. 

\begin{lemma}\label{conveq}
Fix $\delta,\alpha>0$. There exists $C>0$ such that for all $\beta> 0$, 
\[ \mu_{\beta}^K\left(H_K >\delta\right)\leqslant Ce^{-\beta(\delta-\alpha)}. \]
\end{lemma}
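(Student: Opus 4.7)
The plan is direct: bound the numerator and denominator of the ratio
\[
\mu_\beta^K(H_K > \delta) \;=\; \frac{\displaystyle\int_{\{H_K>\delta\}} e^{-\beta H_K}\,\dd x\,\dd y}{\displaystyle\int e^{-\beta H_K}\,\dd x\,\dd y}
\]
separately, and then match exponents with a single free parameter $\lambda\in(0,1)$. Since $\mu_\beta^K(H_K>\delta)\leqslant 1$, it suffices to treat the case $\alpha<\delta$: if $\alpha\geqslant \delta$ any $C\geqslant 1$ works.

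For the numerator, I would use the elementary splitting $e^{-\beta H_K} = e^{-\beta\lambda H_K}\,e^{-\beta(1-\lambda)H_K}$, and bound the first factor by $e^{-\beta\lambda\delta}$ on $\{H_K>\delta\}$. Because $H_K(x,y)=U^K(x)+|y|^2/2$ splits as a sum, the remaining integral factorizes: the Gaussian part yields $(2\pi/(\beta(1-\lambda)))^{d/2}$ and, since $U^K\geqslant 0$ on the compact torus $M_K$, the position part is bounded by $|M_K|$. Hence
\[
\int_{\{H_K>\delta\}} e^{-\beta H_K}\,\dd x\,\dd y \;\leqslant\; e^{-\beta\lambda\delta}\,|M_K|\,\po\frac{2\pi}{\beta(1-\lambda)}\pf^{d/2}.
\]

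For the denominator, I would exploit $\min U^K=0$: since $\tilde U^K$ coincides with $U$ on an open set containing a global minimizer of $U$ and $\min U=0$, there exists a ball $B\subset M_K$ of positive Lebesgue measure on which $U^K\leqslant \alpha/2$. Then, factoring the Gaussian $y$-integral,
\[
\int e^{-\beta H_K}\,\dd x\,\dd y \;\geqslant\; |B|\,e^{-\beta\alpha/2}\,\po\frac{2\pi}{\beta}\pf^{d/2}.
\]

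Taking the ratio, the $\beta$-powers from the Gaussian integrals cancel and only $(1-\lambda)^{-d/2}$ remains as a $\lambda$-dependent factor. The exponential part is $e^{-\beta(\lambda\delta-\alpha/2)}$, and to recover the target exponent $\delta-\alpha$ I would choose $\lambda = 1-\alpha/(2\delta)\in(1/2,1)$ (using $\alpha<\delta$), which gives $\lambda\delta-\alpha/2 = \delta-\alpha$. The resulting constant $C=|M_K|/|B|\cdot(1-\lambda)^{-d/2}$ depends only on $\delta$, $\alpha$, and $K$ (through $|M_K|$ and the ball $B$), but not on $\beta$, which is exactly the claim. No step looks like a real obstacle — the lemma is essentially a Laplace-type a priori estimate and the only thing to be careful about is that the lower bound on the denominator uses the minimum of $U^K$ on the torus, which is guaranteed to be $0$ by the construction of $U^K$ from $U$.
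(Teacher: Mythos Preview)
Your proof is correct and follows the same overall strategy as the paper: bound the numerator and denominator of the ratio defining $\mu_\beta^K(H_K>\delta)$ separately. The execution differs slightly. For the numerator, the paper decomposes the \emph{domain} $\{H_K\geqslant\delta\}$ into the compact piece $\{\delta\leqslant H_K\leqslant \delta+|y|\}$ and the tail $\{H_K\geqslant\delta+|y|\}$, whereas you decompose the \emph{exponent} via $e^{-\beta H_K}=e^{-\beta\lambda H_K}e^{-\beta(1-\lambda)H_K}$ and then integrate the Gaussian factor explicitly. For the denominator, the paper restricts to the full sublevel set $\{H_K\leqslant\alpha\}$, while you restrict only the $x$-variable to a small ball where $U^K\leqslant\alpha/2$ and keep the full Gaussian $y$-integral. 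Your version has the minor advantage that the $\beta^{-d/2}$ Gaussian normalizations cancel exactly in the ratio, making the $\beta$-independence of $C$ transparent without any case distinction on the size of $\beta$; the paper's version avoids computing Gaussian integrals at the cost of a slightly less clean bookkeeping. Both are equally elementary Laplace-type estimates.
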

\lj{\begin{proof}
For completeness, we recall the short proof of \cite[section 3.1]{Pierro}. First,
\[\int_{M_K\times \R^d} e^{-\beta H_K(z)}\dd z \geqslant \int_{\left\{H_K\leqslant \alpha \right\}} e^{-\beta H_K(z)}\dd z \geqslant e^{-\beta\alpha}/C,\]
for some constant $C>0$ because $\left\{H^K\leqslant \alpha \right\}$ is compact. Likewise, $\left\{\delta \leqslant H^K\leqslant \delta + |y| \right\}$ is compact and
\begin{align*}
\int_{\left\{H^K\geqslant \delta \right\}} e^{-\beta H_K(z)}\dd z &\leqslant \int_{\left\{\delta \leqslant H_K\leqslant \delta + |y| \right\}} e^{-\beta H_K(z)}\dd z + \int_{\left\{H_K\geqslant \delta + |y| \right\}} e^{-\beta H_K(z)}\dd z \\ &\leqslant e^{-\beta\delta } \po C + C\int_{\R^d} e^{-|y|} \dd y\pf
\end{align*}
for some $C>0$. The ratio of those two inequalities concludes the proof.
\end{proof}}
The goal of this section is to prove a similar result but with the law of $Z_t^K$ instead of $\mu_\beta^K$, with an explicit dependency of the constants in term of $\beta_0$.

Denote by $f^K_t$ the law of $Z_t^K$. Since $\na U^K$ and its derivatives are all bounded, we will be able to show that $f_t^K$ is smooth, see Section~\ref{hypo1}. Consider the relative density $ h^K_t=f^K_t/\mu^K_{t}$. We start with a  uniform in time quantitative bound of the norm of $h_t^K$  in $L^2\left(\mu^{K}_{t}\right)$, for nice initial conditions.


\begin{proposition}\label{evol0}
Under Assumption~\ref{hyp} with $c>c^*$, let $ K\geqslant 1$. There exists $b_K>0$, which depends on $U$ and $K$, such that, if $\beta_0\geqslant b_K$ and $f_0\in\C^{\infty}(M_K\times\R^d)$ with compact support, then for all $t\geqslant0$,
\begin{multline*}
\int_{M_K\times\R^d} \po h^K_t\pf ^2\dd \mu_{t}^K    \\ \leqslant 1+ \int_{M_K\times\R^d}\left(\left|(\na_x+\na_y)h_0^K\right|^2+4\sqrt{\gamma_0^{-1}\beta_0}(\|\na^2_x U^K\|_{\infty}+1+\gamma_0)^2\po h_0^K\pf ^2\right)\dd \mu_{0}^K  .
\end{multline*}
\end{proposition}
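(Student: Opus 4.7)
The proof will follow an $L^2$-hypocoercive strategy in the spirit of Villani, in the time-inhomogeneous form used in \cite{Pierro}. On the compact torus $M_K$, the potential $U^K$ has bounded derivatives of all orders, so the Fokker--Planck equation for $f_t^K$ propagates $\mathcal{C}^\infty$ regularity and Gaussian tails in $y$, which justifies the computations below (existence and smoothness of $h_t^K = f_t^K/\mu_t^K$ being established in Section~\ref{hypo1}).

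The plan is to introduce a modified $H^1$-type functional
\[
\mathcal{N}_t(h) = \|h\|_t^2 + A_t \|(\na_x + \na_y) h\|_t^2 + B_t \|\na_y h\|_t^2,
\]
where $\|\cdot\|_t$ denotes the $L^2(\mu_t^K)$-norm and $A_t, B_t > 0$ are time-dependent coefficients to be tuned. A direct computation yields the evolution equation
\[
\partial_t h_t^K = \mathcal{L}_t h_t^K + \beta_t' (H_K - \langle H_K\rangle_{\mu_t^K}) h_t^K,
\]
with $\mathcal{L}_t = y\cdot\na_x - \na U^K\cdot\na_y - \gamma_t y\cdot\na_y + \gamma_t\beta_t^{-1}\Delta_y$; the extra multiplicative term comes from the time-dependence of $\mu_t^K$ through $\beta_t$. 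Differentiating $\mathcal{N}_t(h_t^K)$ in time and integrating by parts with the commutator identities $[\na_y, \mathcal{L}_t] = \na_x - \gamma_t\na_y$ and $[\na_x, \mathcal{L}_t] = -(\na^2 U^K)^{\top}\na_y$ leads to
\[
\tfrac{d}{dt}\mathcal{N}_t(h_t^K) = -\mathcal{Q}_t(\na_x h_t^K, \na_y h_t^K) + R_t,
\]
where $\mathcal{Q}_t$ is a quadratic form containing the natural velocity dissipation $2\gamma_t\beta_t^{-1}\|\na_y h_t^K\|_t^2$ and, thanks to the $A_t\|(\na_x+\na_y)h\|^2$ correction, an induced dissipation of order $A_t\|\na_x h_t^K\|_t^2$ in the $x$-direction (the standard hypocoercive mechanism); the remainder $R_t$ collects contributions involving $\|\na^2 U^K\|_\infty$, the time-derivatives $A_t', B_t', \beta_t', \gamma_t'$, and the unbounded multiplier $\beta_t'(H_K - \langle H_K\rangle_{\mu_t^K})$.

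The crucial step is to choose $A_t, B_t$ so that $\mathcal{Q}_t \geqslant R_t$, giving $\tfrac{d}{dt}\mathcal{N}_t(h_t^K)\leqslant 0$. Following the scaling of \cite{Pierro}, I would take $A_t$ and $B_t$ as suitable negative powers of $\beta_t$ and $\gamma_t$ with constants depending on $\|\na^2 U^K\|_\infty$; under Assumption~\ref{hyp}, $|\beta_t'|$ and $|\gamma_t'|$ are $O(1/t)$, hence for $\beta_0 \geqslant b_K$ large enough (depending on $K$, $\|\na^2 U^K\|_\infty$ and $\kappa$) the time-derivative corrections are absorbed by the dissipation uniformly in $t\geqslant 0$. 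One then obtains
\[
\int_{M_K\times\R^d} (h_t^K)^2\dd\mu_t^K \leqslant \mathcal{N}_t(h_t^K) \leqslant \mathcal{N}_0(h_0^K),
\]
and evaluating $\mathcal{N}_0(h_0^K)$ with $A_0$ normalized to $1$ (matching the $\|(\na_x+\na_y)h_0^K\|_0^2$ term in the right-hand side) and bounding the $B_0\|\na_y h_0^K\|_0^2$ contribution via Young's inequality against the Gaussian marginal in $y$ yields the stated estimate, the coefficient $4\sqrt{\gamma_0^{-1}\beta_0}(\|\na^2 U^K\|_\infty+1+\gamma_0)^2$ on $(h_0^K)^2$ reflecting the scaling of $B_0$ at $t = 0$.

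The main obstacle is the $\beta_t'(H_K - \langle H_K\rangle_{\mu_t^K}) h_t^K$ source term: since $H_K$ is unbounded on $M_K\times\R^d$, its contribution to $\tfrac{d}{dt}\|h_t^K\|_t^2$ cannot be controlled by $\|h_t^K\|_t^2$ alone. Using $\na_y H_K = y$ and integrating by parts against the $\gamma_t\beta_t^{-1}\Delta_y$-dissipation converts this contribution into a term of order $|\beta_t'|^2\beta_t/\gamma_t$ times the $H^1$-norm of $h_t^K$, which is absorbed precisely because of the requirement $\beta_0\geqslant b_K$. This is also the origin of the factor $\sqrt{\gamma_0^{-1}\beta_0}$ appearing in the right-hand side of the proposition.
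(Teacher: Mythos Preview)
Your overall architecture (a twisted $H^1$ functional, commutator-based dissipation in $x$, integration by parts for the unbounded $|y|^2$ term) is the right one and matches the paper. However, there is a genuine gap: the argument as written never uses the Poincar\'e inequality for $\mu_{\beta_t}^K$, and therefore never uses the hypothesis $c>c^*$, which is in fact the heart of the proof.

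Concretely, the source term coming from the time dependence of $\mu_{\beta_t}^K$ contributes $\beta_t'\int (H_K-\langle H_K\rangle)(h_t^K)^2\,\dd\mu_{\beta_t}^K$ to $\tfrac{d}{dt}\|h_t^K\|_t^2$. After controlling $\int |y|^2 (h_t^K)^2\,\dd\mu_{\beta_t}^K$ by $C(\|h_t^K\|_t^2+\|\nabla_y h_t^K\|_t^2)$ (this is the ``Vila'' lemma in the paper, your integration-by-parts step), the residual error on the $L^2$ part is of order $C\beta_t'\|h_t^K\|_t^2$, \emph{not} $C|\beta_t'|^2\beta_t/\gamma_t\,\|h_t^K\|_t^2$ as you claim. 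Since $\beta_t'\sim 1/t$ is not integrable, this cannot be handled by Gr\"onwall, and since the hypocoercive dissipation $\mathcal Q_t$ acts only on $\nabla h_t^K$, there is nothing to absorb it. The paper closes this gap by invoking the Poincar\'e inequality $\|h_t^K-1\|_t^2\leqslant \lambda(\beta_t)^{-1}\|\nabla h_t^K\|_t^2$ with $\lambda(\beta_t)\gtrsim e^{-\beta_t(c+c^*)/2}=(e^{c\beta_0}+t)^{-1+\alpha}$, $\alpha=(c-c^*)/(2c)>0$; this is exactly where $c>c^*$ is needed, since it makes $\lambda(\beta_t)$ dominate $\beta_t'(1+\beta_t^2)$ for $\beta_0$ large. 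Only then does one obtain $\tilde N'(t)\leqslant 0$ and hence the stated bound (applied to $h_t^K-1$, which accounts for the ``$1+$'' on the right-hand side).
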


The proof of this proposition is postponed to Section~\ref{hypo1}. 


\begin{lemma}\label{integ}
Under Assumption~\ref{hyp} with  $c>c^*$, fix  $A>1$, $\varepsilon=1/4$ and consider $\mathcal B,C_A^1,b_A,t^*$ as in  Lemma~\ref{changcondinit}. Set $D_A=2C_{A}^1 + A + 4 + 4c$ and $K_{A}=D_{A}+1$. 
There exist $C_{A}>0$ and $b_A'\geqslant b_A$ that do not depend on $\beta_0$ such that, for all $\beta_0 \geqslant b_A'$ and $z_0\in \R^{2d}$ with $H(z_0) \leqslant A$, we have that, $\mathbb P_{z_0 }$-a.s., $\sup_{t\in\left[0,t^*\right]}H_{K_A}\left(Z^{K_A}_t\right)\mathbbm{1}_{\B}< D_{A}$,  and for all $t\geqslant 0$ 
\[\mathbb P_{z_0}\po \left. H_{K_A}\left(Z^{K_A}_{t+t^*}\right)\geqslant D_{A}\right|\B\pf\leqslant \frac{C_{A}}{(e^{c\beta_0}+t)^2}.\]
\end{lemma}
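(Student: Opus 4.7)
On $\B$ we have $H(Z_t)\leqslant A+1<K_A$ for every $t\leqslant t^*$, so $X_t$ stays in the open set on which $\tilde U^{K_A}$ agrees with $U$. By \eqref{eq:egalte_proc} this forces $H_{K_A}(Z^{K_A}_t)=H(Z_t)\leqslant A+1<D_A$ on $\B$, which gives the first part.

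\textbf{Reduction.} For the second part, the plan is to combine the hypocoercive $L^2$ estimate of Proposition~\ref{evol0} with the tail bound of Lemma~\ref{conveq}. By the strong Markov property and Lemma~\ref{changcondinit}, the conditional density $\tilde f$ of $Z_{t^*}$ given $\B$ satisfies $\tilde f\leqslant Ce^{C_A^1\beta_0}\mathbbm 1_{\{H\leqslant A+1\}}$, with support lifting faithfully to $M_{K_A}\times\R^d$ by the previous step. Since Proposition~\ref{evol0} requires a smooth initial density, we fix once and for all (independently of $\beta_0$) a smooth compactly supported probability density $g_0$ on $M_{K_A}\times\R^d$ with $g_0\geqslant c_0\mathbbm 1_{\{H_{K_A}\leqslant A+1\}}$ and $\mathrm{supp}(g_0)\subset\{H_{K_A}\leqslant A+2\}$. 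Then $\tilde f\leqslant(C/c_0)e^{C_A^1\beta_0}g_0$, and by linearity of the Fokker--Planck equation this domination propagates: writing $g^{K_A}_t$ for the density at time $t$ of the (time-shifted) process started from $g_0$ at time $t^*$, the conditional density of $Z^{K_A}_{t+t^*}$ given $\B$ is bounded by $(C/c_0)e^{C_A^1\beta_0}g^{K_A}_t$. Setting $h^{K_A}_t=g^{K_A}_t/\mu^{K_A}_t$, Cauchy--Schwarz then gives
\[\mathbb P_{z_0}\po\left.H_{K_A}(Z^{K_A}_{t+t^*})\geqslant D_A\right|\B\pf\leqslant\frac{C}{c_0}e^{C_A^1\beta_0}\po\int\po h^{K_A}_t\pf^2\dd\mu_t^{K_A}\pf^{1/2}\sqrt{\mu_t^{K_A}(\{H_{K_A}\geqslant D_A\})}.\]

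\textbf{Estimates and book-keeping.} Because $g_0$ is fixed and supported in $\{H_{K_A}\leqslant A+2\}$ while $\gamma_0\lesssim\beta_0$, a direct computation of $\int(h^{K_A}_0)^2\dd\mu^{K_A}_0$ and $\int|(\na_x+\na_y)h^{K_A}_0|^2\dd\mu^{K_A}_0$ plugged into Proposition~\ref{evol0} yields $\int(h^{K_A}_t)^2\dd\mu_t^{K_A}\leqslant P(\beta_0)e^{\beta_0(A+2)}$ with $P$ polynomial in $\beta_0$, while Lemma~\ref{conveq} with $\alpha=1$ gives $\mu_t^{K_A}(\{H_{K_A}\geqslant D_A\})\leqslant C'e^{-\beta_t(D_A-1)}$. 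Using $e^{\beta_0 x}\leqslant(e^{c\beta_0}+t)^{x/c}$ for $x\geqslant 0$ and $e^{-\beta_t x}=(e^{c\beta_0}+t)^{-x/c}$, the right-hand side of the displayed inequality is bounded by $C''\sqrt{P(\beta_0)}(e^{c\beta_0}+t)^\eta$ with
\[\eta=\frac{2C_A^1+A+3-D_A}{2c}=-2-\frac{1}{2c}\]
thanks to the choice $D_A=2C_A^1+A+4+4c$. The strict inequality $\eta<-2$ frees a factor $(e^{c\beta_0}+t)^{-1/(2c)}\leqslant e^{-\beta_0/2}$ which absorbs $\sqrt{P(\beta_0)}$ once $\beta_0\geqslant b_A'$ is large enough, yielding the announced bound $C_A/(e^{c\beta_0}+t)^2$. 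The main technical obstacle is thus turning the $L^\infty$ bound on $\tilde f$ into an $L^2$ input for Proposition~\ref{evol0}, handled cleanly by the domination by $g_0$; the precise value of $D_A$ is then pinned down by this final book-keeping.
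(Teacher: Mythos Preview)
Your proof is correct and follows essentially the same route as the paper's: dominate the conditional density at time $t^*$ by a multiple of a fixed smooth reference density, propagate this via the Markov property, apply Cauchy--Schwarz to split into the $L^2$ norm of the relative density (controlled by Proposition~\ref{evol0}) and the tail of the Gibbs measure (controlled by Lemma~\ref{conveq}), and then balance exponents using the explicit value of $D_A$. The only cosmetic differences are that the paper absorbs the polynomial prefactor $\beta_{t^*}^{5/2}$ into the exponent by replacing $A+2$ with $A+3$, whereas you keep it as $P(\beta_0)$ and kill it with the spare $(e^{c\beta_0}+t)^{-1/(2c)}$; and the paper indexes everything at $\beta_{t^*+t}$ rather than mixing $\beta_0$ and $\beta_t$ (harmless since $\beta_{t^*}-\beta_0$ is bounded independently of $\beta_0$). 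One small remark: when you write ``by linearity of the Fokker--Planck equation this domination propagates'', what is actually used is the positivity preservation of the Markov semigroup (equivalently, the maximum principle), not linearity alone.
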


\begin{proof}
The first point is clear since, by definition of $\B$, $\sup_{\left[0,t^*\right]}H_{K_A}(Z^{K_A}_t)\mathbbm{1}_{\B}< A+1 \leqslant D_A$.

In the rest of the proof we denote by $C$ different constants.

Let us introduce a function which will serve as a new initial condition. Let $v_d$ and $w_d$ be respectively the volumes of $\left\{H\leqslant A+1 \right\}$ and  $\left\{H\leqslant A+2 \right\}$ in $M_K\times\R^d$, and $f_0\in\C^{\infty}$ be such that $(2v_d)^{-1}\mathbbm{1}_{\left\{H\leqslant A+1\right\}}\leqslant f_0 \leqslant 2(w_d)^{-1}\mathbbm{1}_{\left\{H \leqslant A+2\right\}}$. Notice that $f_0$ does not depend on $\beta_0$, and that $\na f_0$ is bounded since $f_0$ is smooth on a compact set. 
As in the proof of Lemma~\ref{retourcompact},  from Lemma~\ref{changcondinit} and the strong Markov property,
\begin{align*}
\procz{z_0}{H_{K_A}\left(Z^{K_A}_{t+t^*}\right)\geqslant D_{A}}{\B} 
 &\leqslant  2v_dCe^{C^1_A\beta_0}\mathbb{P}_{f_0}\left(H_{K_A}\left(Z^{K_A,\beta_{t^*}}_{t}\right)\geqslant D_{A}\right)\,,
\end{align*}
where $Z^{K_A,\beta_{t^*}}$ is a process similar to $Z^{K_A}$ except that $\beta_0$ has been replaced by $\beta_{t^*}$ and $(\gamma_t)_{t\geqslant 0}$ by $(\gamma_{t+t^*})_{t\geqslant 0}$. Denote by $h_t^*$ the relative density of the law of $Z^{K_A,\beta_{t^*}}_t$ with respect to $\mu_{\beta_{t+t^*}}^{K_A}$. By the Cauchy-Schwartz inequality,  
\begin{align}
\mathbb{P}_{f_0}\left(H_{K_A}\left(Z^{K_A,\beta_{t^*}}_{t}\right)\geqslant D_{A}\right) &= \int_{\left\{H_{K_A}\geqslant D_{A}\right\}} h_t^* \dd \mu_{\beta_{t+t^*}}^{K_A}\nonumber \\ &\leqslant \sqrt{\int_{\R^d\times\R^d} (h_t^*)^2\dd \mu_{\beta_{t+t^*}}^{K_A}}\sqrt{\mu_{\beta_{t+t^*}}^{K_A}(H_{K_A}\geqslant D_{A})}.\label{eq:CS}
\end{align}
For the second term, applying Lemma~\ref{conveq}  with $\alpha=1$, 
\[\mu_{\beta_{t+t^*}}^{K_A}(H_{K_A}\geqslant D_{A})  \leqslant  Ce^{-\beta_{t+t^*}(D_A-1)}\,.\]
For the first term, Proposition~\ref{evol0} applies if $\beta_0\geqslant b_{K_A}$, which yields
\begin{align*}
\int_{M_K\times\R^d} &\left(h_t^*\right)^2\dd \mu_{\beta_{t+t^*}}^{K_A} \\ &\leqslant 1+  \int_{M_K\times\R^d}\left(\left|(\na_x+\na_y)h_0^*\right|^2+4\sqrt{\gamma_{t^*}^{-1}\beta_{t^*}}(\|\na^2_x U^K\|_{\infty}+1+\gamma_{t^*})^2\left(h_0^*\right)^2\right)\dd \mu_{\beta_{t^*}}^{K_A}\\
&\leqslant  C\beta_{t^*}^{\frac{5}{2}}\int_{\left\{H_{K_A}\leqslant A+2\right\}} \frac{1}{\mu_{\beta_{t^*}^{K_A}}(z)} \dd z 
\end{align*}
for some $C>0$ that does not depend on $\beta_0$, where we used Assumption~\ref{hyp} and a uniform bound on $f_0$ and $\na f_0$.  For any $\beta\geqslant 1$, using that $e^{-\beta U^{K_A}(x)}\leqslant 1$,
\[ \int_{M_{K_A}} e^{-\beta U^{K_A}(x)} \dd x\leqslant (2L_{K_A})^d\qquad \text{and} \qquad \int_{M_{K_A}\times\R^d} e^{-\beta H_{K_A}(z)} \dd z \leqslant (2\sqrt{2\pi}L_{K_A})^d .\]
Hence,   
\[\int_{M_{K_A}\times\R^d}  \left(h_t^*\right)^2\dd \mu_{\beta_{t+t^*}}^{K_A} \leqslant  C\beta_{t^*}^{\frac{5}{2}}\int_{\left\{H_{K_A}\leqslant A+2\right\}} e^{\beta_{t^*}H_{K_A}(z)} \dd z \leqslant Ce^{\beta_{t^*}(A+3)}  \]
Everything put together    gives, if $\beta_0 \geqslant \max(1,b_{K_A},b_A)$, using the monotonicity of $t\mapsto\beta_t$, we conclude with
\[\proc{H_{K_A}\left(Z^{K_A}_{t+t^*}\right)\geqslant D_{A}}{\B}  \leqslant Ce^{-\frac{1}{2} \beta_{t^*+t}\left(D_{A}-1-2C_A^1-A-3\right)}  \leqslant Ce^{-2c\beta_t}  = \frac{C}{(e^{c\beta_0}+t)^2} .\]
\end{proof}

In fact, a similar proof already yields the convergence of the kinetic annealing on the compact torus:

\begin{proposition}
\label{prop:CVtore}
Under Assumption~\ref{hyp} with $c>c^*$,   for all $K>1$ and $\delta>0$,
\[\mathbb{P}(H_K(Z^K_t)>\delta)\underset{t\rightarrow+\infty}\longrightarrow 0.\]
\end{proposition}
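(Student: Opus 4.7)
The plan is to essentially repeat the argument of Lemma~\ref{integ}, but now exploit the exponential decay of the Gibbs tail $\mu^K_{\beta_t}(H_K>\delta)$ as $\beta_t\to+\infty$ to obtain a probability that actually \emph{vanishes}, rather than merely being bounded by a fixed constant. First I would establish a uniform-in-time moment estimate on the torus: since $U^K$ is bounded, the identity $|y|^2=2(H_K-U^K)$ turns the generator computation of Proposition~\ref{defined} into a Lyapunov inequality $L_tH_K\leq -2\gamma_tH_K+2\gamma_t\|U^K\|_\infty+Ld$, which, combined with $\gamma_t\geq\kappa$ and Gr\"onwall, yields $\mathbb E_{z_0}(H_K(Z^K_t))\leq M+H_K(z_0)e^{-2\kappa t}$ with $M=\|U^K\|_\infty+Ld/(2\kappa)$. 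Markov's inequality then provides, for any $\varepsilon>0$, constants $A>1$ and $t_0\geq 0$ with $\mathbb P(H_K(Z^K_{t_0})>A)<\varepsilon/3$; enlarging $t_0$ if necessary, I can also assume $\beta_{t_0}\geq\max(b_K,b_A)$, where $b_K$ is the constant of Proposition~\ref{evol0} and $b_A$ is the one of (the torus analog of) Lemma~\ref{changcondinit}. By the Markov property I then restart the process at time $t_0$, so that henceforth $\beta_0\geq\max(b_K,b_A)$ and $\mathbb P(H_K(Z^K_0)>A)<\varepsilon/3$.

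Conditionally on $\{H_K(Z^K_0)\leq A\}$, the argument of Lemma~\ref{changcondinit} carries over to the torus process (since $\nabla \tilde U^K$ is smooth, bounded and periodic; alternatively via the coupling \eqref{eq:egalte_proc}) and provides $t^*>0$ together with an event $\mathcal B$ of conditional probability $\geq 1-\varepsilon/3$ on which the law of $Z^K_{t^*}$ has a density bounded by $Ce^{C^1_A\beta_0}\mathbbm{1}_{\{H_K\leq A+1\}}$. Dominating this density, up to a $\beta_0$-independent factor, by a fixed $\mathcal C^\infty$ compactly supported probability density $f_0$ (exactly as in the proof of Lemma~\ref{integ}), the strong Markov property at $t^*$ together with Proposition~\ref{evol0} applied to the time-shifted process started from $f_0$ yield $\sup_{s\geq 0}\int (h^*_s)^2 \dd\mu^K_{\beta_{t^*+s}} \leq Ce^{\beta_{t^*}(A+3)}$ via the same computation as in Lemma~\ref{integ}. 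Cauchy--Schwarz and Lemma~\ref{conveq} with $\alpha=\delta/2$ then give
\[\mathbb P(H_K(Z^K_{t^*+s})>\delta \mid \mathcal B)\leq C_{\beta_0}\,e^{-\beta_{t^*+s}\delta/4},\]
which tends to $0$ as $s\to+\infty$. Splitting
\[\mathbb P(H_K(Z^K_{t^*+s})>\delta)\leq \mathbb P(H_K(Z^K_0)>A)+\mathbb P(\mathcal B^c\mid H_K(Z^K_0)\leq A)+\mathbb P(H_K(Z^K_{t^*+s})>\delta\mid\mathcal B),\]
the total is $<\varepsilon$ for $s$ large enough, and as $\varepsilon$ is arbitrary, this concludes.

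The main obstacle is making sure that the ``bad'' exponential factors built up during the smoothing step, namely $e^{C^1_A\beta_0}$ and $e^{\beta_{t^*}(A+3)}$, are ultimately dominated by the Gibbs concentration factor $e^{-\beta_{t^*+s}\delta/2}$ coming from Lemma~\ref{conveq}. Once $t^*$ is fixed, this holds automatically since $\beta_{t^*+s}\to+\infty$ while $\beta_0$ and $t^*$ stay frozen; this is precisely why, on the compact torus, one gets convergence in probability to zero, whereas on $\R^{2d}$ Lemma~\ref{integ} could only yield a fixed upper bound on $H$.
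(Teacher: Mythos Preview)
Your proof is correct and follows essentially the same route as the paper's: regularize the law at a small time via Lemma~\ref{changcondinit}, dominate by a smooth compactly supported $f_0$, invoke Proposition~\ref{evol0} to bound the $L^2(\mu^K_{\beta_t})$-norm of the relative density uniformly in time, and conclude by Cauchy--Schwarz together with Lemma~\ref{conveq}. The paper's write-up is slightly leaner in two respects. First, instead of your Lyapunov/Gr\"onwall moment estimate and Markov's inequality to produce the level $A$, the paper simply conditions on the initial point $z_0$ (reducing to a Dirac) and sets $A=H_K(z_0)$; this sidesteps any issue about whether $\mathbb E(|Y_0|^2)$ is finite. Second, the paper does not track the explicit prefactor $e^{\beta_{t^*}(A+3)}$ from the proof of Lemma~\ref{integ}: once $t^*$ and $f_0$ are fixed, Proposition~\ref{evol0} already gives a finite bound on $\sup_s\|h^*_s\|_{L^2(\mu^K_{\beta_{t^*+s}})}$, which together with $\mu^K_{\beta_t}(H_K>\delta)\to 0$ is all that is needed. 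Your tracking of constants is harmless, and your concluding paragraph correctly identifies why the argument succeeds on the torus.
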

\begin{proof}
As in the proof of Proposition~\ref{retourcompact}, by the Markov property, without loss of generality we assume that $\beta_0\geqslant b_K$ and that the initial condition is a Dirac mass at some $z_0\in  M_K\times \R^d$. Fix $\varepsilon>0$, let $A=H(z_0)$ and, from Lemma~\ref{changcondinit} (applied to the process $Z^K$ rather than $Z$, the proof is the same) let $t_0,C>0$ be such that the event $\mathcal B = \{\forall t\in[0,t_0], H(Z^K)\leqslant A+1\}$ has a probability larger than $1-\varepsilon$ and   the law of $Z_{t_0}^K$ conditioned on $\mathcal{B}$ has a density $\tilde{f}^K_{t_0}\leqslant C'\mathbbm{1}_{\left\{H_K\leqslant A+1\right\}}$. Let $C>0$ and $f_0$ be a smooth probability density on $M_K\times\R^d$ with compact support such that $C'\mathbbm{1}_{\left\{H_K\leqslant A+1\right\}}\leqslant Cf_0$. We then have:
\[
\proc{H_K(Z^K_{t+t_0})>\delta}{\B} =\ec{\proc{H_K(Z^K_{t+t_0})>\delta}{\F_{t_0}}}{\B} \leqslant C\mathbb{P}_{f_0}\left(H_K(\tilde Z_t)>\delta\right)\,,
\]
where $\tilde Z$ is a process similar to $Z^{K_A}$ except that $\beta_0$ has been replaced by $\beta_{t_0}$ and $\gamma$ has been time-shifted. Denoting by $\tilde{f}_t$ the law of $\tilde Z_t$ (with initial condition $f_0$),   Proposition~\ref{evol0} means that $t\mapsto \|\tilde  f_t/\mu^K_{\beta_{t+t_0}}\|_{L^2(\mu^K_{\beta_{t+t_0}})}$ is bounded.
 As in the previous proof, applying the Cauchy-Schwarz inequality and Lemma~\ref{conveq} with $\alpha=\delta/2$ then yields :
\[\mathbb{P}_{f_0}^2\left(H_K(\tilde Z_t)>\delta\right)\leqslant C \mu^K_{\beta_{t+t_0}}(H_K>\delta) \leqslant Ce^{-\frac{\beta_t\delta}{2}}\rightarrow 0\,.\]
As a consequence,
\[\limsup_{t\rightarrow\infty}\mathbb{P}(H_K(Z^K_t)>\delta)\leqslant \mathbb{P}\left(\mathcal B^c\right) + \limsup_{t\rightarrow\infty}\proc{H_K(Z^K_t)>\delta}{\B }\leqslant\varepsilon\,,\]
which concludes since $\varepsilon$ is arbitrary.
\end{proof}

\subsection{Localization and convergence}\label{Loca}

%
%
Building upon the results of the previous section, we can now prove the following.

\begin{proposition}\label{boundedness}
Under Assumption~\ref{hyp} with $c>c^*$, fix some $A>1$. There exist $b_{A}>1$, $K_A>A$ which depends on $A$, $U$, $c$, and $\kappa$ but not $\beta_0$ such that, for all $\beta_0\geqslant b_{A}$ and all initial condition $z_0\in\{H\leqslant A\}$,
\[\mathbb{P}_{z_0}\left(\sup_{t\geqslant0}H\left(Z_t\right)\leqslant K_A\right)\geqslant \frac{1}{4}.\]
\end{proposition}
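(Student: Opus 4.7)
The plan is to combine the pointwise-in-time control of Lemma~\ref{integ} with a short-time persistence estimate, in order to upgrade the pointwise bound into the sup-in-time statement required. Fix $A>1$ and apply Lemma~\ref{integ} (which uses Lemma~\ref{changcondinit} with $\varepsilon=1/4$) to obtain the parameters $t^*, D_A, K_A=D_A+1, C_A, b_A'$ and the event $\mathcal B=\{\sup_{[0,t^*]}H(Z_t)\leqslant A+1\}$, with $\mathbb P_{z_0}(\mathcal B)\geqslant 3/4$. Since $U^{K_A}$ agrees with $U$ on a neighborhood of $\{U\leqslant K_A\}$, the paths $(Z_t)$ and $(Z^{K_A}_t)$ coincide (up to $\theta_{K_A}$) as long as $H(Z_t)\leqslant K_A$; combined with $A+1\leqslant K_A$, this means the proposition will follow from
\[\mathbb P(\sigma=\infty\mid\mathcal B)\geqslant 1/3, \qquad \sigma:=\inf\{t\geqslant t^*:H_{K_A}(Z^{K_A}_t)\geqslant K_A\},\]
since this together with $\mathbb P(\mathcal B)\geqslant 3/4$ gives $\mathbb P_{z_0}(\sup_{t\geqslant 0}H(Z_t)\leqslant K_A)\geqslant 1/4$.

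Two ingredients are needed. \emph{First}, an occupation-time bound obtained from Lemma~\ref{integ} by Fubini:
\[\mathbb E_{z_0}\left[\int_0^\infty \mathbbm 1_{H_{K_A}(Z^{K_A}_{t^*+t})\geqslant D_A}\,\dd t\,\Big|\,\mathcal B\right]\leqslant \int_0^\infty \frac{C_A}{(e^{c\beta_0}+t)^2}\,\dd t=\frac{C_A}{e^{c\beta_0}}.\]
\emph{Second}, a uniform short-time persistence estimate: there exist $\delta\in(0,1]$ and $p>0$, independent of $\beta_0$, of $s\geqslant 0$, and of $z$ with $H_{K_A}(z)=K_A$, such that
\[\mathbb P_z\left(\inf_{r\in[0,\delta]}H_{K_A}(\tilde Z^s_r)\geqslant D_A\right)\geqslant p,\]
where $\tilde Z^s$ denotes the torus process with time parameters shifted by $s$. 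This second point is a standard It\^o/Doob argument: writing $H_{K_A}(\tilde Z^s_r)-K_A=\int_0^r L_{s+u}H_{K_A}\,\dd u+M_r$ and stopping at the first exit of $(D_A, K_A+1)$, the drift is uniformly bounded on this region (using $\gamma_s\beta_s^{-1}\leqslant L$) while the martingale has quadratic variation $\leqslant 4L(K_A+1)r$; Doob's $L^2$ maximal inequality then gives the claim with $p=1/2$ for $\delta$ small enough.

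Combining: on $\{\sigma<\infty\}$, continuity gives $H_{K_A}(Z^{K_A}_\sigma)=K_A$, so the strong Markov property at $\sigma$ and the persistence estimate yield
\[\mathbb P\left(\sigma<\infty \text{ and }\inf_{r\in[0,\delta]}H_{K_A}(Z^{K_A}_{\sigma+r})\geqslant D_A\,\Big|\,\mathcal B\right)\geqslant p\,\mathbb P(\sigma<\infty\mid\mathcal B),\]
and on this event the indicator $\mathbbm 1_{H_{K_A}\geqslant D_A}$ contributes at least $\delta$ to the occupation integral. Hence $\delta p\,\mathbb P(\sigma<\infty\mid\mathcal B)\leqslant C_A/e^{c\beta_0}$, so $\mathbb P(\sigma=\infty\mid\mathcal B)\geqslant 1-C_A/(\delta p\,e^{c\beta_0})\geqslant 1/3$ for $\beta_0\geqslant b_A$ large enough. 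The main anticipated obstacle is the uniform short-time persistence estimate: it must hold independently of $\beta_0$, of the time-shift $s$, and of the initial velocity (which may have norm up to $\sqrt{2K_A}$). Fortunately, under Assumption~\ref{hyp}, $\gamma_s\beta_s^{-1}\leqslant L$ is uniformly bounded and $\nabla U^{K_A}$ is bounded on the compact torus, so the argument ultimately reduces to a standard martingale estimate.
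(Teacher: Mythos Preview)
Your overall strategy---combining the occupation-time bound from Lemma~\ref{integ} with a short-time persistence estimate at the level $K_A$---is different from the paper's (which applies It\^o's formula to a smooth bump function $\Psi_A\circ H_{K_A}$ supported in $[D_A,K_A]$ and then uses Doob's up-crossing inequality for the resulting bounded martingale). The idea is natural, but there is a genuine gap in the persistence step.

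You assert that on $\{D_A<H_{K_A}<K_A+1\}$ the drift of $H_{K_A}$ is uniformly bounded, citing $\gamma_s\beta_s^{-1}\leqslant L$. However
\[L_{s+u}H_{K_A}(x,y)=\gamma_{s+u}\beta_{s+u}^{-1}\,d-\gamma_{s+u}|y|^2,\]
and only the first term is controlled by $\gamma\beta^{-1}\leqslant L$; the friction term $-\gamma_{s+u}|y|^2$ is \emph{not} bounded below uniformly in $s$, since Assumption~\ref{hyp} allows $\gamma_t\to\infty$ (e.g.\ $\gamma_t=\beta_t$, which is the setting of \cite{Pierro}). Concretely, start at $z=(x_0,y_0)$ with $U^{K_A}(x_0)=0$ and $|y_0|^2=2K_A$: the damping makes $|Y_r|^2$ decay at rate $\sim 2\gamma_s$ while $X$ moves by at most $O(1/\gamma_s)$, so $H_{K_A}$ falls below $D_A=K_A-1$ in time $O(1/\gamma_s)$. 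Hence no fixed $\delta>0$ works uniformly in $s$, and the key inequality $\delta p\,\mathbb P(\sigma<\infty\mid\mathcal B)\leqslant C_A e^{-c\beta_0}$ is not justified.

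The paper's argument handles exactly this issue by absorbing $\gamma_t$ into the integrand: it bounds $|L_t\Phi_A|\leqslant C(1+\gamma_t)\mathbbm 1_{\{H_{K_A}\geqslant D_A\}}$ and then estimates $\mathbb E[\sup_t|R_t|\mathbbm 1_{\mathcal B}]\leqslant C\int_0^\infty(1+L\beta_s)(e^{c\beta_0}+s)^{-2}\,\dd s$, which is small precisely because the $1/t^2$ decay of Lemma~\ref{integ} beats the logarithmic growth of $\gamma$. Your approach could plausibly be repaired by the same device---work with the weighted occupation time $\int_0^\infty(1+\gamma_{t^*+t})\mathbbm 1_{\{H_{K_A}\geqslant D_A\}}\,\dd t$ and argue that the persistence event contributes a uniformly positive amount to \emph{this} integral (heuristically, $\int_0^{c/\gamma_s}\gamma_{s+u}\,\dd u\approx c$)---but as written the proof does not go through in the generality required.
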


\begin{proof}
It is enough to show the same result for the process $Z^{K_A}$ since, from \eqref{eq:egalte_proc},
\lj{
\begin{align*}
&\left\{\sup_{t\geqslant0}H(Z_t)\leqslant K_A\right\} \\ &= \left\{\sup_{t\geqslant0}H(Z_t)\leqslant K_A,(U^K(X^{K_A}_t),Y_t^{K_A})_{t\leqslant \tau_{K_A}} =(U(X_t),Y_t)_{t\leqslant \tau_{K_A}}, \tau_{K_A}=\infty \right\} \\ &= \left\{\sup_{t\geqslant0}H_{K_A}(Z^{K_A}_t)\leqslant K_A,(U^K(X^{K_A}_t),Y_t^{K_A})_{t\leqslant \tau_{K_A}} =(U(X_t),Y_t)_{t\leqslant \tau_{K_A}}, \tau_{K_A}=\infty \right\} \\ &=\left\{\sup_{t\geqslant0}H_{K_A}(Z^{K_A}_t)\leqslant K_A\right\}.
\end{align*}}
We use the definitions and notations of Lemma~\ref{integ}.
Let $\Psi_{A}\in\C^{\infty}(\R_+,[0,1])$ equal to $0$ outside of $\left[D_{A},K_{A}\right]$ and such that $\Psi_{A}\left(\frac{D_{A}+K_{A}}{2}\right)=1$, and let $\Phi_{A}=\Psi_A\circ H^{K_{A}}$. Then, 
 there exists $C>0$, independent of $\beta$, such that, for all $t\geqslant 0$, $|L_t\Phi_{A}|\leqslant C(1+\gamma_t)\mathbbm{1}_{\left\{H^{K_{A}}\geqslant D_{A}\right\}}$. 
 
Recall the definition $\B=\left\{\sup_{t\leqslant t^*}H(Z_t)\leqslant A+1 \right\}$. From Ito's formula,  
\[\Phi_A\left(Z_{t+t^*}^{K_A}\right)\mathbbm{1}_{\B}=\left(M_t + R_t\right)\mathbbm{1}_{\B}  \]
where $M_t$ is a local martingale and $R_t=\int_{t^*}^t L_s\Phi_A(Z^{K_A}_s)\dd s$.
We then get that for $\beta_0\geqslant b_A'$, 
\begin{align*}
\E\left(\sup_{t\geqslant0} |R_t|\mathbbm{1}_\B\right) &\leqslant C\int_0^{\infty}(1+\gamma_s) \ec{\mathbbm{1}_{\left\{H^{K_{A}}\left(Z_{s+t^*}^{K_A}\right)\geqslant D_{A}\right\}}}{\B} \dd s \\ &\leqslant C\int_0^{\infty}(1+L\beta_s) \proc{H\left(Z^{K_A}_{s+t^*}\right)\geqslant D_{A}}{\B} \dd s \\ &\leqslant C\int_0^{\infty} \frac{1+L\ln(e^{c\beta_0}+t)}{c(e^{c\beta_0}+t)^2} \dd s.
\end{align*}
We take  $\beta_0$ large enough to get that, by Markov's inequality,   $\mathbb{P}(\mathcal E|\B)\geqslant 3/4$, where $\mathcal E=\left\{\sup_{t\geqslant0} |R_t|\leqslant 1/10\right\}$.
 Now,
\[\mathcal E\cap\B\cap \left\{\sup_{t\geqslant0}H_{K_A}(Z^{K_A}_t)\geqslant K_A\right\} \ \ \subset \ \mathcal E\cap\B \cap \left\{\sup_{t\geqslant0}M_t\geqslant \frac{9}{10}\right\}\,,\]
so that
\[\proc{\sup_{t\geqslant 0}H_{K_A}\left(Z^{K_A}_t\right)\geqslant K_A }{\B} \leqslant \proc{\mathcal E^c}{\B} + \proc{\mathcal E, M_t \text{ up-crosses } \left[\frac{1}{10},\frac{9}{10}\right]}{\B} \,.\] 
Consider the stopping time 
 \[ \sigma = \inf\left\{t\geqslant 0, M_t\notin\left[-\frac{1}{10},\frac{11}{10}\right]\right\}.\]
Then $M_{t\wedge\sigma}$ is a bounded local martingale, hence a martingale. Moreover, since $\Phi_A$ takes values in $[0,1]$,   $M_t=M_{t\wedge\sigma}$ for all $t\geqslant 0$ on $\mathcal E\cap\B$. 
By Doob's up-crossing inequality, for any $T>0$, the probability  that $M_t$ up-crosses $[1/10,9/10]$ before time $T$ is bounded  by $\E(M_{T\wedge\sigma}-\frac{1}{10})/(8/10) \leqslant \frac{1}{4}$. As a consequence,
 \[\proc{\mathcal E, M_t  \text{ up-crosses } \left[\frac{1}{10},\frac{9}{10}\right]}{\B} \leqslant \frac{1}{4}\,. \]
 As a conclusion,
\begin{multline*}
\mathbb{P}\left(\sup_{t\geqslant 0}H_{K_A}\left(Z^{K_A}_t\right)\geqslant K_A \right) \\ \leqslant \mathbb{P}\left(\B^c\right) + \mathbb{P}(\mathcal E^c|\B) + \proc{\mathcal E, M_t  \text{ up-crosses } \left[\frac{1}{10},\frac{9}{10}\right]}{\B}  \leqslant \frac{3}{4}.
\end{multline*}
\end{proof}

As announced, combining this result with Proposition~\ref{retourcompact} yields the following. 

\begin{proposition}\label{prop:born{\'e}}
Under Assumption~\ref{hyp} with $c>c^*$, almost surely, $\sup_{t\geqslant 0} H(Z_t) <+\infty$.
\end{proposition}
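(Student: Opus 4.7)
The plan is to iterate the one-shot localization bound of Proposition~\ref{boundedness} at a well-chosen sequence of stopping times using the strong Markov property, and combine this with the recurrence information of Proposition~\ref{retourcompact} via a Borel--Cantelli style argument.

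First, fix $A\in\mathbb N$ and let $b_A,K_A$ be the constants provided by Proposition~\ref{boundedness}. Since $\beta_t\rightarrow +\infty$, there is a deterministic time $T_A<\infty$ such that $\beta_t\geqslant b_A$ for every $t\geqslant T_A$. Define inductively
\[
\tau_0 = \inf\{t\geqslant T_A : H(Z_t)\leqslant A\},\qquad \sigma_n = \inf\{t\geqslant \tau_n : H(Z_t)\geqslant K_A+1\},
\]
\[
\tau_{n+1}=\inf\{t\geqslant \sigma_n : H(Z_t)\leqslant A\}.
\]
These are $(\mathcal F_t)$-stopping times. By the remark following Assumption~\ref{hyp}, the time-shifted process $(Z_{\tau_n+t})_{t\geqslant 0}$, conditionally on $\mathcal F_{\tau_n}$ and on $\{\tau_n<\infty\}$, is a solution of \eqref{eq} with $\beta_0$ replaced by $\beta_{\tau_n}\geqslant b_A$ and starting from $Z_{\tau_n}\in\{H\leqslant A\}$. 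Proposition~\ref{boundedness} therefore yields
\[
\mathbb P\bigl(\sup_{t\geqslant \tau_n} H(Z_t)\leqslant K_A \,\big|\, \mathcal F_{\tau_n}\bigr)\geqslant \tfrac14 \quad \text{on } \{\tau_n<\infty\}.
\]
In particular $\mathbb P(\sigma_n<\infty\,|\,\mathcal F_{\tau_n})\leqslant 3/4$ on $\{\tau_n<\infty\}$.

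Iterating this conditional bound through the strong Markov property gives, for every $n\geqslant 0$,
\[
\mathbb P(\sigma_n<\infty)\leqslant (3/4)^{n+1},
\]
so that almost surely there exists $n$ with $\sigma_n=\infty$, and on this event $\sup_{t\geqslant 0}H(Z_t)\leqslant \max(H(Z_0),\ldots,H(Z_{\tau_n-}),K_A+1)$. Now introduce the event
\[
E_A=\Bigl\{\liminf_{t\rightarrow\infty}H(Z_t)\leqslant A\Bigr\},
\]
and observe that on $E_A$, the process visits $\{H\leqslant A\}$ at arbitrarily large times, so $\tau_n<\infty$ for every $n$. Consequently, on $E_A\cap\{\sup_{t\geqslant 0} H(Z_t)=\infty\}$ all $\sigma_n$ are also finite, an event of probability zero by the geometric bound above. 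Hence $\mathbb P\bigl(E_A\cap\{\sup_t H(Z_t)=\infty\}\bigr)=0$.

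By Proposition~\ref{retourcompact}, $\bigcup_{A\in\mathbb N}E_A$ has probability $1$, so
\[
\mathbb P\bigl(\sup_{t\geqslant 0} H(Z_t)=\infty\bigr)\leqslant \sum_{A\in\mathbb N} \mathbb P\bigl(E_A\cap\{\sup_t H(Z_t)=\infty\}\bigr)=0,
\]
which gives the claim. The only non-routine ingredient is the application of Proposition~\ref{boundedness} to the shifted process, which is legitimate thanks to the shift-invariance built into Assumption~\ref{hyp}; everything else is the strong Markov property together with an elementary geometric estimate.
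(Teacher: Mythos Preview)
Your argument is essentially the paper's own: define alternating stopping times for entering $\{H\leqslant A\}$ and exiting $\{H\leqslant K_A\}$, apply Proposition~\ref{boundedness} at each entry via the strong Markov property to obtain a geometric tail on the number of exits, and then take a countable union over $A$ using Proposition~\ref{retourcompact}. One cosmetic slip to fix: with $E_A=\{\liminf_t H(Z_t)\leqslant A\}$ the process need not actually hit $\{H\leqslant A\}$ at arbitrarily large times (think of $H(Z_t)=A+1/t$), so either use a strict inequality as the paper does, $\Omega_A=\{\liminf_t H(Z_t)<A\}$, or equivalently note that $\bigcup_{A\in\N}E_A=\bigcup_{A\in\N}\{\liminf_t H(Z_t)<A\}$ and run the argument with the latter sets.
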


\begin{proof}
For $A>0$, let 
\[\Omega_{A}=\left\{\liminf_{t\rightarrow\infty} H(Z_t)<A\right\}.\]
It is sufficient to show that, for all $A>0$,   $(Z_t)_{t\geqslant0}$ is bounded on $\Omega_{A}$ since, from Proposition~\ref{retourcompact}, $\mathbb{P}\left(\cup_{A\geqslant 0} \Omega_{A}\right) = 1$. Hence, we fix $A>0$.

Let $b_{A},K_A$ be as in Proposition \ref{boundedness} and $S_0=t_A$ where $t_A$ satisfies $\beta_{t_A}\geqslant b_A$. By induction, for $k\in\N$, define $T_k = \inf\left\{t\geqslant S_{k}; H(Z_t)\leqslant A\right\}$ and $S_{k+1}=\inf\left\{t\geqslant T_k; H(Z_t)\geqslant K_A \right\}$. From Proposition~\ref{boundedness} and the Markov property, 
\begin{align*}
\mathbb{P}(S_{k+1}<\infty|S_k<\infty) &= \mathbb{P}(S_{k+1}<\infty,T_k<\infty|S_k<\infty) \\ &= \E(\mathbbm{1}_{T_k<\infty}\mathbb{P}(S_{k+1}<\infty|\mathcal{F}_{T_k})|S_k<\infty) \ \leqslant\ \frac{3}{4}\,,
\end{align*}
where we used that $\beta_{T_k}\geqslant \beta_{t_A} \geqslant b_A$ for all $k\in\N$. This implies that a.s. there exists $J\in\mathbb{N}$ such that $S_J=\infty$. On $\Omega_{A}$, $T_{J}$ is finite and $\sup_{t\geqslant T_{J}} H(Z_t)\leqslant K_A$, which concludes.
\end{proof}

We can now finally prove  Theorem~\ref{Th}.

\begin{proof}[Proof of Theorem~\ref{Th}]
Let $\varepsilon>0$ and, thanks to Proposition~\ref{prop:born{\'e}}, let $K>1$ be such that $\mathbb{P}\left(\sup_{t\geqslant0} H(Z_t)\geqslant K\right)\leqslant\varepsilon$. Using Proposition~\ref{prop:CVtore}, for $t\geqslant 0$,
\begin{align*}
\mathbb{P}(H(Z_t)>\delta) &\leqslant \varepsilon + \mathbb{P}\left(\sup_{t\geqslant 0} H(Z_t)\leqslant K ; H(Z_t)>\delta\right) \\ &=\varepsilon + \mathbb{P}\left(\sup_{t\geqslant 0} H_K\left(Z^K_t\right)\leqslant K ; H_K\left(Z^K_t\right)>\delta\right) \\ &\leqslant \varepsilon + \mathbb{P}\left(H_K\left(Z^K_t\right)>\delta\right) \underset{t \rightarrow+\infty}\longrightarrow \varepsilon.
\end{align*}
As a consequence, $\limsup_{t\rightarrow\infty}\mathbb{P}\left(H\left(Z_t\right)>\delta\right)\leqslant\varepsilon$ for all $\varepsilon>0$, which concludes. 
\end{proof}

\subsection{Full process on a compact space}\label{fullprocess}

One of the main point of the proof of Theorem~\ref{non} is essentially to get something of the form 
\[\forall \delta>0,\qquad \mathbb P_{z_0} \po \sup_{t\geqslant 0} H(Z_t)< c+\delta\pf >0 \]
for suitable initial conditions $z_0$. 
First, let us highlight some of the difficulties in order to motivate the rest of this section. Refining the proof of Proposition~\ref{boundedness}, we would obtain a similar result but with $c$ replaced by $4c$. The factor $4$ is due to two things. First, in Lemma~\ref{integ}, we prove a bound of order $1/t^2$ while the proof of Proposition~\ref{boundedness} in fact only requires an integrable bound, i.e. $1/t^{1+\delta}$ for any $\delta>0$ is enough. The second factor 2 is lost when the Cauchy-Schwartz inequality is used in \eqref{eq:CS}. To solve this, the Cauchy-Schwartz inequality has to be replaced by the H{\"o}lder inequality, which means the $L^2$ estimate  of Proposition~\ref{evol0} is not sufficient and we need $L^p$ estimates for all $p>2$, or even better, $L^{\infty}$ estimates \pierre{(besides, in \cite{Pierro}, the convergence is stated in relative entropy, which is weaker than $L^p$ for any $p>1$, which is why we said in the introduction that these results were not sufficient to conclude in the fast cooling case)}. In fact in the elliptic case the proof of \cite{HoStKu} relies on $L^\infty$ bounds. In order to get such bounds in an hypocoercive case, we will work with $H^k$-Sobolev norms for $k\geqslant 1$, as in the work \cite{Hyp}  of Zhang, and then use Sobolev embeddings. This should be done with a correct control of the dependency in time of the constants, and to do so it is convenient to work with a process (both position and velocity) in a compact manifold. This is done by replacing the Hamiltonian $H(x,y)=U(x)+|y|^2/2$ by $H_K(x,y)=U^K(x)+W(y)$ where $W$ is some periodic potential with $W(y)=|y|^2/2$ below some threshold. This raises an issue in the dissipation of the hypocoercive modified entropy. Indeed, in the modified $H^1$-norm of Villani (and similarly in the modified $H^k$-norm of Zhang), the key point is that the missing coercivity in $x$ is recovered through a term
\[ \na_xh\cdot \left[\na_y,y\na_x\right]h = |\na_x h|^2\]
where $[A,B]=AB-BA$ stands for  the commutator of $A$ and $B$. When $H$ is replaced by $H_K$, this term becomes
 \[ \na_xh \cdot \left[\na_y,\na W(y)\na_x\right]h = \na_xh \cdot \na^2 W \na_x h \]
  which is negative at maxima of $W$. For this reason, and since anyway we are not really interested in the process above some energy threshold, we will add some Brownian noise in the position variable  where $W(y)\neq |y|^2/2$.
  
As a conclusion, for these reasons, in this section, we consider a process $Z^K=(X^K,Y^K)$ on a periodic torus $M_K^2$  solution of 
\begin{align}
  \left\{
      \begin{aligned}\label{eq2}
        &\dd X^K_t = \na W(Y^K_t)\dd t -\sigma(Y_t)\na U^K(X_t^K)\dd t + \sqrt{2\sigma(Y_t^K)\beta_t^{-1}}\dd \tilde B_t\\    
        &\dd Y^K_t = -\nabla U^K(X_t^K)\dd t - \gamma_t \na W(Y^K_t)\dd t + \sqrt{2\gamma_t\beta_t^{-1}}\dd B_t
      \end{aligned}
    \right.
\end{align}
where $B$ is as in  \eqref{eq}, $\tilde B$ is another independent $d$-dimensional Brownian motion, and $\sigma,W$ and $U^K$ are $2L_K$-periodic non-negative functions on $\R^d$ (with $M_K=(\R/2L_K\Z)^d$). \lj{The term $\sigma\na U^K$ has been added so that, for fixed $\gamma,\beta$, this new process admits the explicit stationary measure
\[
\mu_{\beta}^K \propto e^{-\beta H_K(x,y)}\dd x\dd y
\]
that satisfies a Poincar\'e inequality (see Section~\ref{hypo2}).} Here and in all this section, when there is no ambiguity we identify $2L_K$-periodic functions on $\R^d$ and functions on $M_K$. Let us now give the precise definition of $L_K,\sigma,W$ and $U^K$.

\medskip

\begin{figure}
    \centering
    \includegraphics[scale=0.33]{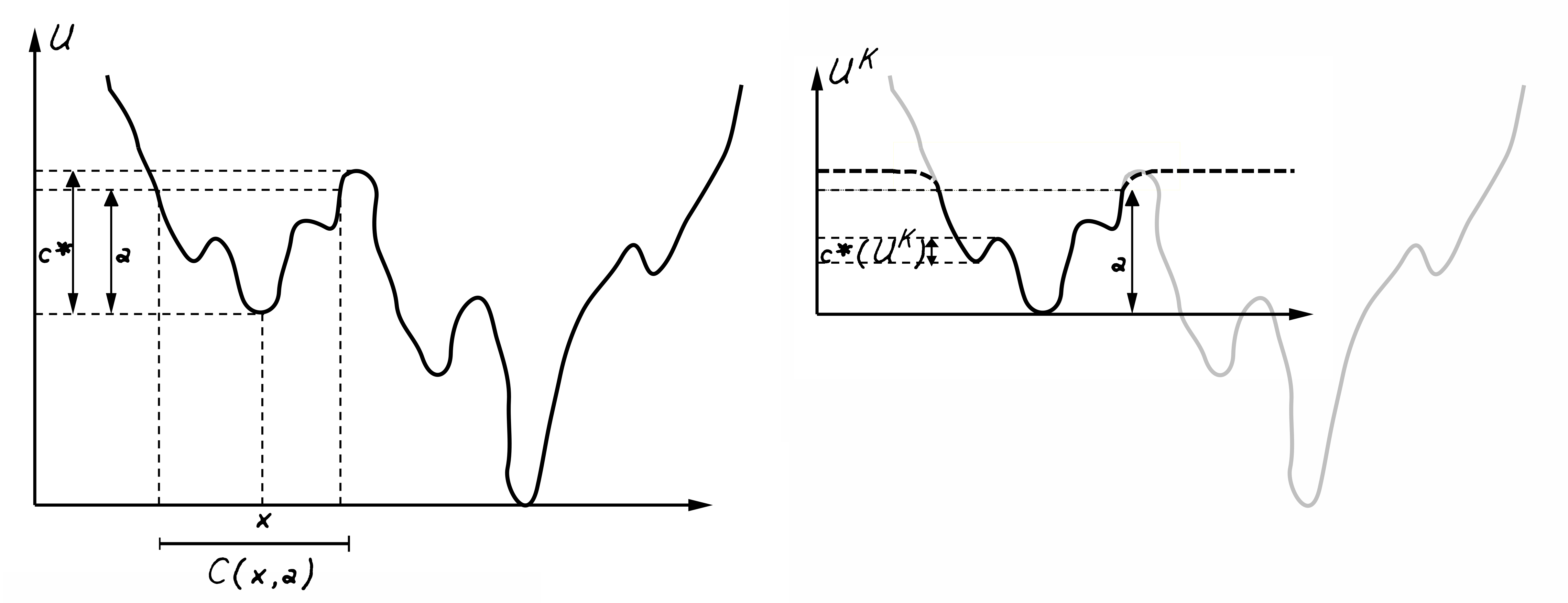}
    \caption{Construction of $U^K$ from $U$, with $\tilde x$ chosen so that $c^*(U)=\mathrm{D}(\tilde x)$.}
    \label{imageU}
    \label{fig:my_label}
\end{figure}

In all this section we consider  fixed $U\in \mathcal{C}^\infty(\R^d)$, $\beta$, $\gamma$, $\tilde x\in\R^d$ and $a>0$ satisfying Assumption~\ref{hyp2}. Setting $K=a+1$, similarly to Section~\ref{posicompset} we fix some $L_K>\sqrt{2K}+2$ large enough so that $\mathrm{C}(\tilde x,a) \subset [-L_K+1,L_K-1]^d$. We consider a non-negative $U^K\in \mathcal C^\infty(M_K)$ with $c^*(U^K)<c$ and such that, seen as a periodic function on $\R^d$, $U^K(x)=U(x)-U(\tilde x)$ for all $x\in\mathrm{C}(\tilde x,a)$. Such a function exists: indeed, since $\tilde c$ is continuous, $\mathrm{C}(\tilde x,a)$ is a compact set, and then, under Assumption~\ref{hyp2}, $\sup\{\tilde c(y,\tilde x),y\in\mathrm{C}(\tilde x,a)\}<c$. We can thus choose $\delta\in(0,a-c)$ small enough so that $\sup\{\tilde c(y,\tilde x) \lj{:} y\in\mathrm{C}(\tilde x,a)\}+2\delta<c$ and, using that the boundary of $\mathrm{C}(\tilde x,a)$ is in $\{U=U(\tilde x)+a\}$, we let $U^K$ be any $\mathcal{C}^\infty$ potential on $[-L_K,L_K]^d$ with $U^K(x)=U(x)-U(\tilde x)$ on $\mathcal C(\tilde x,a)$, $U^K(x)\in[a-\delta,a+\delta]$ for $x\in [-L_K,L_K]^d\setminus \mathrm{C}(\tilde x,a)$ with $U^K(x)=a$ for all $x\in[-L_K,L_K]^d\setminus[-L_K+1/2,L_K+1/2]$ so that we can extend it to a   $2L_K$-periodic $\mathcal C^ \infty$ potential on $\R^d$.  The fact that $c^*(U^K)<c$ is straightforward since a path from any $y\in[-L_K,L_K]^d$ to $\tilde x$ can be obtained by a straight line from $y$ to some $y_*\in\mathrm{C}(\tilde x,a)$ with $\tilde c(y,y_*)\leqslant 2\delta$ and then a path to $\tilde x$, where Assumption~\ref{hyp2} is used. See Figure~\ref{fig:my_label} for an illustration of the construction of $U^K$.

 Concerning the new potential $W$ for the velocity, first, write $n=\sqrt{2K}+1$ and $m=\sqrt{2K}+2$. Then, fix some $2L_K$-periodic $W_1\in\mathcal C^\infty(\R, \R)$ such that for all $s\in\left[-m,m\right]$, $W_1(s)=s^2/2$, $W_1$ is symmetric on $[-L_K,L_K]$, and increasing on $\left[0,L_K\right]$. For $y\in \left[-L_K,L_K\right]^d$, we set $W(y)=\sum_{i=1}^d W_1(y_i)$.  

Similarly, fix some $2L_K$-periodic  non-negative $\sigma^0\in \mathcal C^\infty(\R^d,\R_+)$ with   $\sigma^0(y)=0$ for $y\in[-n,n]^d$, $\sigma^0(y) = \sigma_*^0 $ for $y\in [-L_K,L_K]^d\setminus [-m,m]^d$ for some constant $\sigma_*^0>0$, and   $|\nabla \sigma^0|^2\leqslant C\sigma^0$ for some $C>0$.  Set  $\sigma= r \sigma^0$ where $r>0$ is chosen small enough that $\|\na\sigma\|_{\infty} \|\na U^K\|_{\infty}\leqslant\frac{1}{2}$ and $|\na \sigma|^2 \leqslant   \sigma$.

 The useful properties of $W$ and $\sigma$ can be summarized as follows :

\begin{lemma}\label{hypo:compact1} 
 
\begin{itemize}
\item Seeing $W$ as a periodic function from $\R^d$ to $\R$, there exists $m\in (\sqrt{2K}, L_K)$ such that $W(y)=|y|^2/2$ for all $y\in\left[-m,m\right]^d$.
\item On $M_K$, $W$ has a unique local minimum, which is global. 
\item There exist $n\in (\sqrt{2K},m)$ and $\sigma_{\ast}>0$ such that, seeing $\sigma$ as a periodic function from $\R^d$ to $\R_+$,  $\sigma(y)=\sigma_{\ast}$ for all $y\in\left[-L_K,L_K\right]^d \backslash\left[-m,m\right]^d$   and $\sigma(y) = 0$ for all $y\in\left[-n,n\right]^d$.
\item $\sigma$ satisfies $\|\na\sigma\|_{\infty} \|\na U^K\|_{\infty}\leqslant1/2$ and  $ |\na\sigma(y)|^2\leqslant \sigma(y)$ for all $y\in M_K$.
\end{itemize}
\end{lemma}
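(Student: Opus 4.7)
The proof is essentially a verification that the explicit construction of $W$ and $\sigma$ carried out just above the lemma delivers exactly the four listed properties, so my plan is to address each bullet in turn by unpacking the relevant definitions.

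For the first bullet I set $m := \sqrt{2K}+2$. By the choice $L_K > \sqrt{2K}+2$ we have $m\in(\sqrt{2K},L_K)$. Since $W_1(s)=s^2/2$ on $[-m,m]$ and $W(y)=\sum_{i=1}^d W_1(y_i)$, tensorizing gives $W(y)=|y|^2/2$ for all $y\in[-m,m]^d$. For the second bullet I use the two structural properties of $W_1$: it is symmetric on $[-L_K,L_K]$ and strictly increasing on $[0,L_K]$. Together these imply that, viewed as a function on $\R/2L_K\Z$, $W_1$ attains its minimum uniquely at the class of $0$ (since symmetry yields strict decrease on $[-L_K,0]$ and agreement of the values at $\pm L_K$ is compatible with the torus identification). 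It follows that $W=\sum W_1(y_i)$ on $M_K$ has a unique, hence global, minimum at $0$.

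For the third bullet I set $n := \sqrt{2K}+1$, so that $n\in(\sqrt{2K},m)$ by definition, and I transfer the vanishing and constancy properties of $\sigma^0$ to $\sigma=r\sigma^0$ by setting $\sigma_* := r\sigma_*^0$. For the fourth bullet I choose the parameter $r$ small. First, $\|\na\sigma\|_\infty=r\|\na\sigma^0\|_\infty$, so imposing
\[
r \leqslant \frac{1}{2\|\na\sigma^0\|_\infty\|\na U^K\|_\infty}
\]
(and $r$ arbitrary if the denominator vanishes) secures $\|\na\sigma\|_\infty\|\na U^K\|_\infty\leqslant 1/2$. Second, from $|\na\sigma^0|^2\leqslant C\sigma^0$ we get
\[
|\na\sigma|^2 = r^2|\na\sigma^0|^2 \leqslant r^2 C\sigma^0 = rC\sigma,
\]
so any $r\leqslant 1/C$ yields $|\na\sigma|^2\leqslant\sigma$. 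Taking the minimum of these two bounds for $r$ completes the verification.

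There is no real obstacle in this lemma: all the work has been front-loaded into the preliminary construction of $W_1$ (where the existence of a smooth symmetric periodic extension of $s\mapsto s^2/2$ from $[-m,m]$ that is monotone on $[0,L_K]$ is the only nontrivial point) and of $\sigma^0$ (where the slight subtlety is that, near the boundary of its support, $\sigma^0$ must vanish to a high enough order so that $|\na\sigma^0|^2\leqslant C\sigma^0$ holds; this is achieved by building $\sigma^0$ as the square of a smooth function). The lemma itself merely catalogues the usable consequences for Section~\ref{hypo2}.
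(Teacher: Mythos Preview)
Your verification is correct and matches the paper's intent: the paper does not give a separate proof of this lemma at all, merely stating it as a summary of the construction carried out in the preceding paragraphs. Your bullet-by-bullet check that the explicit choices $m=\sqrt{2K}+2$, $n=\sqrt{2K}+1$, $\sigma_*=r\sigma_*^0$ and sufficiently small $r$ deliver the stated properties is exactly what is implicitly being claimed, and your remarks on the second bullet (uniqueness of the local minimum of $W_1$ on the circle via symmetry and monotonicity, then tensorization) and on the fourth (the two smallness constraints on $r$) fill in the only points that need a word of justification.
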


We write $H_K(x,y) = U^K(x) + W(y)$,
\[ \mathcal{M} = \left\{ y\in M_K;\na^2 W(y) = I_d \right\} \]
and notice that, by construction, for $y\in M_K \setminus\mathcal{M}$, $\sigma(y) = \sigma_*$. Moreover, if $|y|^2/2\leqslant K$ then $W(y)=|y|^2/2$ and $\sigma(y)=0$.

By construction of $U^K,W$ and $\sigma$, if we consider $Z$ and $ Z^K$ the solutions respectively of \eqref{eq} and \eqref{eq2} with the same initial condition in $\mathrm{C}(\tilde x,c)$ and the same Brownian motion $B$, then $H(Z_t)=H_K(Z_t^K)$ for all  $t\leqslant \tau := \inf\{s\geqslant0,\ H(Z_s) \geqslant a\}$. In particular, for $\delta>0$ small enough so that $c+\delta<\inf\{U^K(x),x\in[-L_K,L_K]^d \setminus \mathrm{C}(\tilde x,a)\}$,
\begin{equation}\label{eq:HKZK}
    \{H_K(Z_t^K) <c+\delta\ \forall t\geqslant 0\} \subset \{X_t \in \mathrm{C}(\tilde x,c+\delta) \ \forall t\geqslant 0\}  \,,
\end{equation} 
which means we are lead to prove that the left hand side has a positive probability.



Denote by $f_t^K$ the law of the solution of \eqref{eq2},  write $\mu^K_{\beta}={\mathcal{Z}_{\beta}^K}^{-1}e^{-\beta H_K(z)}\dd z$ where $\mathcal{Z}_{\beta}^K$ makes $\mu^K_{\beta}$ a probability measure, and let $h_t^K = f_t^K/\mu^K_{\beta_t}$. Similarly to Section~\ref{Loca}, the key point is the following estimate, proven in Section~\ref{hypo2}.

\begin{proposition}\label{evol1}
Let $f_0^K\in\mathcal{C}^{\infty}(M_K\times M_K)$. Under Assumption~\ref{hyp2}, $t\mapsto \|h_t^K\|_\infty$ is bounded. 
\end{proposition}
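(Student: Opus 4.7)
The plan is a high-order Sobolev hypocoercivity argument in the spirit of Zhang \cite{Hyp}, adapted to the time-inhomogeneous setting with the added $\sigma$-diffusion. Since $M_K\times M_K$ is a smooth compact manifold, once one has a uniform-in-$t$ bound on a suitable weighted $H^k$-norm of $h_t^K$ for $k>d$, Sobolev embedding (with care on the $\beta_t$-dependence of the constants, which is where the explicit form of $\mu_{\beta_t}^K$ is used) yields the desired bound on $\|h_t^K\|_\infty$.

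First I would write the Fokker-Planck-like equation for the relative density $h_t^K=f_t^K/\mu_{\beta_t}^K$:
\begin{equation*}
\partial_t h_t^K \,=\, L_t^* h_t^K \,-\, \beta_t'\bigl(H_K-\mu_{\beta_t}^K(H_K)\bigr)\, h_t^K,
\end{equation*}
where $L_t^*$ is the adjoint in $L^2(\mu_{\beta_t}^K)$ of the generator of \eqref{eq2}, and the extra term accounts for the $\beta_t$-derivative of the reference measure. Since $\mu_{\beta_t}^K$ is invariant for $L_t$ at frozen parameters, $L_t^*$ decomposes cleanly into its symmetric part (the two Dirichlet forms, in $\partial_y$ with coefficient $\gamma_t\beta_t^{-1}$ and in $\partial_x$ with coefficient $\sigma\beta_t^{-1}$) and an antisymmetric transport part.

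Next, following Villani \cite{Vil} at order one and Zhang \cite{Hyp} at higher orders, I would introduce a modified Sobolev energy of the form
\begin{equation*}
\mathcal N_k(h)^2 \,=\, \|h\|_{L^2(\mu_{\beta_t}^K)}^2 \,+\!\!\sum_{1\leqslant|\alpha|+|\gamma|\leqslant k}\!\! A_{\alpha,\gamma}(\beta_t)\,\|\partial_x^\alpha\partial_y^\gamma h\|_{L^2(\mu_{\beta_t}^K)}^2 \,+\,\sum B_{\alpha,\gamma,\alpha',\gamma'}(\beta_t)\,\langle\partial_x^\alpha\partial_y^\gamma h,\,\partial_x^{\alpha'}\partial_y^{\gamma'} h\rangle_{L^2(\mu_{\beta_t}^K)},
\end{equation*}
with $\beta_t$-dependent weights tuned so that (i) $\mathcal N_k^2$ is equivalent to the standard weighted $H^k$-norm, and (ii) its time derivative produces, on top of the coercive Dirichlet forms in $\partial_y$ and in $\partial_x$ on $\mathrm{supp}(\sigma)$, the missing coercivity in $\partial_x$ on $\mathcal M$ via the bracket identity $[\nabla_y,\nabla W(y)\cdot\nabla_x]=\nabla^2 W(y)\nabla_x$, which on $\mathcal M$ equals $\nabla_x$. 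Outside $\mathcal M$ this bracket is not clean, but precisely there $\sigma\equiv\sigma_*$ and the $\sigma$-diffusion already supplies direct $\partial_x$-coercivity; the bounds $\|\nabla\sigma\|_\infty\|\nabla U^K\|_\infty\leqslant 1/2$ and $|\nabla\sigma|^2\leqslant\sigma$ from Lemma~\ref{hypo:compact1} are exactly what one needs so that the extra drift $-\sigma\nabla U^K\cdot\nabla_x$ and the cross-derivatives of $\sigma$ are absorbed by the corresponding Dirichlet form.

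Computing $\tfrac{d}{dt}\mathcal N_k(h_t^K)^2$ should then yield a Gr\"onwall-type inequality from which uniform boundedness of $\mathcal N_k(h_t^K)$ follows, whence $\sup_t\|h_t^K\|_\infty<\infty$ by Sobolev embedding. The main obstacle will be the bookkeeping at each order: controlling iterated commutators of $\partial_x^\alpha\partial_y^\gamma$ with $L_t^*$ — in particular with the drift $-\beta_t\nabla H_K$ that emerges when writing $L_t^*$ out explicitly against $\mu_{\beta_t}^K$ — forces the coefficients $A_{\alpha,\gamma}(\beta_t)$ to be carefully tuned negative powers of $\beta_t$, and one must then verify that the ensuing error terms, together with the contribution of $\beta_t'(H_K-\mu_{\beta_t}^K(H_K))$ coming from the shifting reference measure, are dominated by the hypocoercive coercivity, using crucially that $\beta_t\sim c^{-1}\log t$ makes $\beta_t'=O(1/t)$ integrable.
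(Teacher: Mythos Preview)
Your overall strategy is exactly the one the paper follows---a hierarchy of modified $H^k$ norms as in \cite{Hyp}, with the $\sigma$-diffusion supplying $\partial_x$-coercivity on $\mathcal M^c$ and the commutator $[\nabla_y,\nabla W\cdot\nabla_x]$ supplying it on $\mathcal M$---so the architecture is right. But the closing paragraph contains a genuine gap.

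First, $\beta_t'=O(1/t)$ is \emph{not} integrable, so that alone cannot make the error terms harmless. What actually closes the argument is the Poincar\'e inequality for $\mu_{\beta_t}^K$ (Proposition~\ref{prop_poin}), whose constant behaves like $e^{-c^*(U^K)\beta_t}=(e^{c\beta_0}+t)^{-c^*(U^K)/c}$. The whole construction of $U^K$ in Section~\ref{fullprocess} under Assumption~\ref{hyp2} was designed precisely so that $c^*(U^K)<c$; this makes the dissipation rate $r_m(\beta_t)\lambda(\beta_t)\gtrsim t^{-1+\alpha}$ with $\alpha=(c-c^*(U^K))/(2c)>0$, which dominates the $\beta_t'\times(\text{poly in }\beta_t)\sim(\ln t)^a/t$ error. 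Without invoking Poincar\'e you cannot convert the gradient dissipation into contraction of the full norm $\mathcal N_k$, and without $c^*(U^K)<c$ the rate would not beat the error.

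Second, and relatedly, mere uniform boundedness of $\mathcal N_k(h_t^K)$ would \emph{not} give $\sup_t\|h_t^K\|_\infty<\infty$. The Sobolev embedding constant relative to $\mu_{\beta_t}^K$ grows like $e^{\beta_t\|H_K\|_\infty}$, i.e.\ polynomially in $t$. The paper's Gr\"onwall in fact gives $\tilde N_m(t)\leqslant C_1 e^{-C_2 t^\alpha}$, and it is this stretched-exponential decay that kills the polynomial blow-up of the embedding constant. You should state explicitly that you aim for decay of $\mathcal N_k$, not just boundedness, and identify the Poincar\'e step and the inequality $c^*(U^K)<c$ as the mechanism.
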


\begin{lemma}
Under Assumption~\ref{hyp2},  for all probability density $f_0^K\in\mathcal{C}^{\infty}(M_K\times M_K)$ and $\delta>0$, there exists $C>0$ such that for all $ t\geqslant 0$, 
\[ \mathbb{P}_{f_0^K}\left(H_K\left(Z^K_t\right) \geqslant c + \delta\right) \leqslant \frac{C}{(1+t)^{1+\frac{\delta}{2c}}}. \]
\end{lemma}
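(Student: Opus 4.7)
The plan is to combine the uniform $L^\infty$ bound on the relative density $h_t^K$ provided by Proposition~\ref{evol1} with a Gaussian-style tail estimate for $\mu_{\beta_t}^K$, then substitute the explicit form of $\beta_t$. More precisely, the starting identity will be
\[
\mathbb{P}_{f_0^K}\left(H_K(Z_t^K)\geqslant c+\delta\right) = \int_{\{H_K\geqslant c+\delta\}} h_t^K \, \dd\mu_{\beta_t}^K \leqslant \|h_t^K\|_{\infty}\, \mu_{\beta_t}^K\left(H_K\geqslant c+\delta\right),
\]
so that by Proposition~\ref{evol1} the first factor is a finite constant $C_0$ depending only on $f_0^K$ and the data of the problem, and the whole issue reduces to controlling the Gibbs tail on the right.

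For the tail estimate, I will exploit that everything now lives on the compact manifold $M_K\times M_K$ and that $H_K=U^K+W$ attains the value $0$ at $(\tilde x,0)$ (since $U^K(\tilde x)=0$ and $W$ has minimum $0$). Fixing $\alpha\in(0,c+\delta)$, the argument is a one-line variant of Lemma~\ref{conveq}: bounding the numerator of $\mu_{\beta}^K(H_K\geqslant c+\delta)$ by $e^{-\beta(c+\delta)}\mathrm{vol}(M_K\times M_K)$, and the partition function from below by
\[
\mathcal{Z}_{\beta}^K \geqslant \int_{\{H_K\leqslant \alpha\}} e^{-\beta H_K}\,\dd z \geqslant e^{-\beta\alpha}\,\mathrm{vol}\{H_K\leqslant \alpha\},
\]
which is strictly positive by continuity of $H_K$ and $H_K(\tilde x,0)=0$, I obtain $\mu_\beta^K(H_K\geqslant c+\delta)\leqslant C' e^{-\beta(c+\delta-\alpha)}$ for some $C'>0$ depending only on $K$ and $\alpha$.

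The last step is to optimise the rate. Inserting the logarithmic cooling schedule $\beta_t=c^{-1}\ln(e^{c\beta_0}+t)$ yields $e^{-\beta_t(c+\delta-\alpha)}=(e^{c\beta_0}+t)^{-(c+\delta-\alpha)/c}$. Choosing $\alpha=\delta/2$ makes the exponent equal to $1+\delta/(2c)$, and since $e^{c\beta_0}+t\geqslant 1+t$ I conclude
\[
\mathbb{P}_{f_0^K}\left(H_K(Z_t^K)\geqslant c+\delta\right)\leqslant C_0 C'(e^{c\beta_0}+t)^{-1-\delta/(2c)}\leqslant \frac{C}{(1+t)^{1+\delta/(2c)}}.
\]
The genuinely substantial work, namely the uniform boundedness of $\|h_t^K\|_\infty$, is packaged entirely in Proposition~\ref{evol1}; conditional on that input the present lemma is a direct calculation, and no delicate step is needed beyond the particular tuning $\alpha=\delta/2$ which is forced by matching the desired polynomial rate $1+\delta/(2c)$.
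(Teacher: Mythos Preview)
Your proof is correct and follows exactly the same route as the paper: bound the probability by $\|h_t^K\|_\infty\,\mu_{\beta_t}^K(H_K\geqslant c+\delta)$, invoke Proposition~\ref{evol1} for the first factor, and use the tail bound of Lemma~\ref{conveq} with $\alpha=\delta/2$ for the second. You merely spell out the (trivially adapted) proof of Lemma~\ref{conveq} in the fully compact setting $M_K\times M_K$ and make the substitution of $\beta_t$ explicit, which the paper leaves implicit.
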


\begin{proof}
Applying  Proposition~\ref{evol1}, for $t\geqslant 0$
\begin{align*}
\mathbb{P}\left(H_K\left(Z^K_t\right) \geqslant c + \delta\right)
&= \int_{\left\{H_K \geqslant c + \delta\right\}} h_t^K \dd \mu^K_{\beta_t} \\
&\leqslant \sup_{s\geqslant 0}\|h_s^K\|_\infty \mu^K_{\beta_t}\left(H_K \geqslant c + \delta\right) \leqslant   \frac{C}{(1+t)^{(1+\frac{\delta}{2c})}}
\end{align*}
for some $C>0$, where we used Lemma~\ref{conveq} with $\alpha=\delta/2$.
\end{proof}

\begin{lemma}\label{opt}
 Under Assumption~\ref{hyp2},  for all probability density $f_0^K\in\mathcal{C}^{\infty}(M_K\times M_K)$ and $\delta>0$, there exists $t_b>1$ such that  \[\mathbb P_{f_0^K}\po \sup_{t\geqslant t_b}H_K(Z^K_t)<  c+\delta\pf>0.\]
\end{lemma}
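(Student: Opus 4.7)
The plan is to follow the martingale up-crossing strategy from the proof of Proposition~\ref{boundedness}, exploiting the integrable polynomial decay provided by the previous lemma to make the drift term arbitrarily small.

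Without loss of generality, assume $\delta$ is small enough that $c+2\delta/3 < K$ (the general case follows by monotonicity of the event in $\delta$). Pick $\Psi\in\mathcal{C}^{\infty}(\R_+,[0,1])$ vanishing on $[0,c+\delta/3]$ and equal to $1$ on $[c+2\delta/3,+\infty)$, and set $\Phi = \Psi\circ H_K$. On the support of $\Psi'\circ H_K$ (and $\Psi''\circ H_K$), $H_K\leqslant c+2\delta/3 < K$, which forces $|y|^2/2<K$, so that $W(y)=|y|^2/2$ and $\sigma(y)=0$ there by Lemma~\ref{hypo:compact1}. Computing the generator $L_s^K$ of \eqref{eq2} applied to $\Phi$ then reduces on this support to the terms coming from $\gamma_s\na W\cdot\na_y$ and $\gamma_s\beta_s^{-1}\Delta_y$, yielding
\[|L_s^K\Phi(z)|\leqslant C\gamma_s\mathbbm{1}_{\{H_K(z)\geqslant c+\delta/3\}}\]
for a constant $C$ depending only on $\Psi,\delta$ and $d$.

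By It\^o's formula, for $t\geqslant t_b$, $\Phi(Z_t^K) = \Phi(Z_{t_b}^K) + M_t + R_t$ where $R_t=\int_{t_b}^t L_s^K\Phi(Z_s^K)\,\dd s$ and $M$ is a true martingale (its stochastic integrand is bounded on the compact torus since $\na\Phi$ is bounded, $\gamma_s\beta_s^{-1}\leqslant L$, and $\sigma\leqslant\sigma_*$). Using $\gamma_s\leqslant L\beta_s=O(\ln s)$ together with the previous lemma applied with $\delta/3$ in place of $\delta$, Fubini--Tonelli yields
\[\E\po\sup_{t\geqslant t_b}|R_t|\pf \leqslant \int_{t_b}^{\infty} C\gamma_s\, \mathbb{P}_{f_0^K}\bigl(H_K(Z_s^K)\geqslant c+\delta/3\bigr)\dd s \leqslant C'\int_{t_b}^{\infty}\frac{\ln(1+s)}{(1+s)^{1+\delta/(6c)}}\dd s\underset{t_b\to\infty}\longrightarrow 0.\]
Combined with $\mathbb{P}(\Phi(Z_{t_b}^K)\neq 0)\leqslant\mathbb{P}_{f_0^K}(H_K(Z_{t_b}^K)\geqslant c+\delta/3)\to 0$, this shows that for $t_b$ large enough the event $A=\{\sup_{t\geqslant t_b}|R_t|\leqslant 1/10\}\cap\{\Phi(Z_{t_b}^K)=0\}$ satisfies $\mathbb{P}(A)\geqslant 9/10$.

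On $A$, $M_{t_b}=0$ and $M_t=\Phi(Z_t^K)-R_t$, so if $\sup_{t\geqslant t_b}H_K(Z_t^K)\geqslant c+\delta$ then $\Phi$ reaches $1$ at some time $t\geqslant t_b$ and therefore $\sup_{t\geqslant t_b}M_t\geqslant 9/10$. Introducing $\tau=\inf\{t\geqslant t_b:|R_t|>1/5\}$, the stopped continuous martingale $M_{\cdot\wedge\tau}$ is uniformly bounded and coincides with $M$ on $A$. A standard optional stopping argument applied at the first hitting time of $9/10$ (using path continuity and the a.s.\ convergence of bounded martingales) gives $\mathbb{P}(\sup_{t\geqslant t_b}M_{t\wedge\tau}\geqslant 9/10)\leqslant p$ for some explicit $p<1$. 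Hence
\[\mathbb{P}\po\sup_{t\geqslant t_b}H_K(Z_t^K)<c+\delta\pf\geqslant\mathbb{P}(A)-p\geqslant 9/10-p>0,\]
as required. The crux of the argument---and the reason this works whereas the weaker $L^2$ estimate of Proposition~\ref{evol0} was not sufficient in the fast-cooling case---is the polynomial decay with exponent $1+\delta/(6c)>1$ from the previous lemma, which dominates the logarithmic growth of $\gamma_s$ and makes the drift integral converge.
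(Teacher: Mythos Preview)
Your proof is correct and follows essentially the same route as the paper's: compose a smooth cutoff with $H_K$, apply It\^o's formula, use the integrable polynomial decay from the previous lemma to make the drift term $R_t$ small for large $t_b$, and conclude with a martingale argument. The differences are cosmetic---you use an increasing cutoff and a direct optional-stopping bound on the hitting probability of $9/10$, while the paper uses a compactly supported bump function and Doob's up-crossing inequality as in Proposition~\ref{boundedness}; you also observe that $\sigma=0$ on the relevant sublevel set, giving the slightly sharper bound $|L_s^K\Phi|\leqslant C\gamma_s$ in place of the paper's $C(1+\beta_s)$, but either suffices.
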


\begin{proof}
It is enough to show the result for $\delta$ satisfying $c+\delta<a$.
Let $\B$ the event $\left\{H(Z_{t_b})<c\right\}$.
Let $\psi_K:\R\mapsto \left[0,1\right]$ be a $\C^{\infty}_c$ function equal to zero outside of $\left[ c+\frac{\delta}{2} , c + \delta +1\right]$ such that $\psi_K<1$ on $\left[c+\frac{\delta}{2},c+\delta\right[$ and $\psi_K(c+\delta)=1$ and write $\Psi_K= \psi_K\circ H_K$. We have :
\[ \left\{\sup_{t\geqslant t_b}H\left(Z^K_t\right) < c+\delta\right\} = \left\{\sup_{t\geqslant t_b}\Psi_K\left(Z^K_t\right) <1 \right\}.\]
There exists some constant $C>0$ such that $|L\Psi_{K}|\leqslant C(1+\beta_t)\mathbbm{1}_{\left\{H_{K}\geqslant c+\frac{\delta}{2}\right\}}$, and from Ito's formula we can write :
\[\Phi\left(Z_{t+t_b}^{K}\right)\mathbbm{1}_{\B}=\left(M_t + R_t\right)\mathbbm{1}_{\B}  \] where $R_t=\int_{t_b}^t L\Psi_{K}\left(Z^K_s\right)\dd s$, and if $t_b\geqslant t_0$:

\begin{align*}
\E\left(\sup_{t\geqslant t_b} |R_t|\mathbbm{1}_{\B}\right) &\leqslant C\int_{t_b}^{\infty}(1+\beta_s) \ec{\mathbbm{1}_{H^{K}\left(Z^K_t\right)\geqslant c+\frac{\delta}{2}}}{\B} \dd s \\ &\leqslant C\int_{t_b}^{\infty}(1+\beta_s) \mathbb{P}\left(H_K(Z^{K}_{s})\geqslant c+\frac{\delta}{2}|\B\right) \dd s \\ &\leqslant C\int_{t_b}^{\infty} \frac{1+\ln(1+t)}{(1+t)^{1+\frac{\delta}{8c}}} \dd s
\end{align*} 
where the constant $C$ depends only on $U^K$ and its derivative, but not on $t_b$. Since $(1+\ln(1+t))/(1+t)^{1+\frac{\delta}{8c}}$ is integrable, we can take $t_b$ great enough so that the event $E =\left\{\sup_{t\geqslant t_b} |R_t|\leqslant \frac{1}{10}\right\}$ has probability at least $\frac{3}{4}$.
On $E\cap\B$, $M_t$ takes value in $\left[-\frac{1}{10},\frac{11}{10}\right]$ because $0\leqslant\psi_A\leqslant1$. Using Doob's up-crossing inequality as in the proof of Lemma~\ref{boundedness}, we get that the probability of $\Psi_K$ going to 1 knowing $\B$ is less then $\frac{1}{2}$. We conclude by :
\begin{multline*}
\mathbb{P}\left(\sup_{t\geqslant 0}H_{K_A}(Z^{K_A}_t)< c+\delta\right) \geqslant \mathbb{P}(\B)\times\proc{\sup_{t\geqslant 0}H_{K_A}(Z^{K_A}_t)< c+\delta }{\B} >0.
\end{multline*}
\end{proof}

\subsection{Non-convergence with fast cooling schedules}


We are now ready to prove Theorem~\ref{non}. In  this section, Assumption~\ref{hyp2} is enforced and we use the definitions and notations of Section~\ref{fullprocess}. 




We start with a result on the position of the process for small times, as well as a Doeblin-like condition, which will be proven in Section~\ref{sec:regular} :
\begin{lemma}\label{acces}
For all $t,\delta>0$,  write :
\[\mathcal{B}_{t} = \left\{X_s\in \mathrm C(\tilde x,c+\delta)\ \forall s\in\left[0,t\right]\right\}.\]
Then, for all $z_0=(x_0,y_0)$ with $x_0\in \mathrm C(\tilde x,c)$, $\mathbb{P}_{z_0}\left(\mathcal B_{t}\right)>0$, 
and more precisely for all compact set $\mathcal K$ included in the interior of $\mathrm C(\tilde x,c+\delta)\times \R^d$, there exists $\varepsilon>0$ such that
\[\mathbb P_{z_0}\po\mathcal B_t,\ Z_t\in\cdot\pf\geqslant \varepsilon \ell\po \cdot\cap \mathcal K\pf,\]
where $\ell$ stands for the Lebesgue measure.

\end{lemma}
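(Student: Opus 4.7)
The plan is to rely on H\"ormander's hypoellipticity together with the Stroock--Varadhan support theorem. Setting $A_0 = y\cdot\na_x - (\gamma_s y+\na U)\cdot\na_y$ for the drift of \eqref{eq} and $A_i = \partial_{y_i}$, $i=1,\dots,d$, for the diffusion vector fields, a direct computation yields $[A_0,A_i] = \gamma_s\partial_{y_i}-\partial_{x_i}$, so that $A_1,\dots,A_d,[A_0,A_1],\dots,[A_0,A_d]$ span $\R^{2d}$ at every point. H\"ormander's parabolic condition therefore holds, and the law of $Z_t$ killed at the first exit time $\tau$ from the open set $\Omega := \mathrm{C}(\tilde x,c+\delta)\times\R^d$ admits a density $p_t^{\B}(z_0,\cdot)\in\mathcal{C}^\infty(\Omega)$, meaning that $\mathbb P_{z_0}(\mathcal B_t, Z_t\in A) = \int_A p_t^{\B}(z_0,z)\dd z$ for every Borel $A\subset\Omega$.

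For controllability, since the open set $\mathrm C(\tilde x,c+\delta)$ is path-connected and contains $x_0$ (because $x_0\in\mathrm C(\tilde x,c)\subset\mathrm C(\tilde x,c+\delta)$), for any target $z_1=(x_1,y_1)$ in the interior of $\Omega$ one can construct a $\mathcal C^\infty$ curve $\xi:[0,t]\to\mathrm C(\tilde x,c+\delta)$ with boundary data $(\xi(0),\dot\xi(0),\xi(t),\dot\xi(t))=(x_0,y_0,x_1,y_1)$; the possibly large values of $y_0,y_1$ cause no obstruction, as one can devote an arbitrarily short initial (respectively final) time interval to a sharp slow-down (respectively acceleration). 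Feeding the control $u(s) = (\ddot\xi(s)+\na U(\xi(s))+\gamma_s\dot\xi(s))/\sqrt{2\gamma_s\beta_s^{-1}}$ into the control system associated with \eqref{eq}, Stroock--Varadhan's support theorem gives, for every $\eta>0$, $\mathbb P_{z_0}(\sup_{s\leqslant t}|Z_s-(\xi(s),\dot\xi(s))|\leqslant\eta) > 0$. Choosing $\eta$ small enough that this $\eta$-tube is entirely contained in $\mathrm C(\tilde x,c+\delta)$ yields the first claim $\mathbb P_{z_0}(\mathcal B_t)>0$.

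For the Doeblin bound, standard Malliavin calculus arguments for smooth hypoelliptic diffusions imply that $p_t^{\B}(z_0,z_1)>0$ whenever $z_1$ is accessible from $z_0$ by a smooth control whose trajectory stays in $\Omega$; the construction above shows that this is the case for every $z_1$ in the interior of $\Omega$. Consequently $z_1\mapsto p_t^{\B}(z_0,z_1)$ is continuous and strictly positive on an open set containing the compact $\mathcal K$, hence bounded below by some $\varepsilon>0$ on $\mathcal K$, and we conclude
\[\mathbb P_{z_0}\po \mathcal B_t,\ Z_t\in A\pf \geqslant \int_{A\cap\mathcal K}p_t^{\B}(z_0,z)\dd z \geqslant \varepsilon\,\ell(A\cap\mathcal K).\]

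The main technical obstacle is the smoothness and strict positivity on accessible points of the killed transition density $p_t^{\B}$; both are standard consequences of hypoelliptic PDE theory and Malliavin calculus for time-homogeneous diffusions, and the extension to the smooth, uniformly H\"ormander time-inhomogeneous coefficients of \eqref{eq} requires only routine bookkeeping.
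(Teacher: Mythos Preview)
Your approach is essentially the same as the paper's: both rely on H\"ormander hypoellipticity for smoothness of the killed density, controllability of the kinetic Langevin system (your explicit control $u$ is the content of \cite[Proposition~5]{Pierro} that the paper cites), and then continuity plus positivity to get the uniform lower bound on compacts.

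The one genuine difference is in how strict positivity of the killed density is obtained. You invoke Malliavin calculus for positivity on accessible points, which is a valid route but somewhat heavier to make precise for a killed, time-inhomogeneous process. The paper instead restricts to a bounded domain $\mathcal D = \mathrm{int}\big(\mathrm C(\tilde x,c+\delta)\times[-A',A']^d\big)$ (so that the coefficients are bounded) and appeals to the parabolic strong maximum principle of Stroock--Varadhan \cite[Theorem~6.1]{StroockVaradhan}: since the killed density is continuous, not identically zero, and solves a hypoelliptic parabolic equation, it cannot vanish anywhere in $\mathcal D$. This PDE route is arguably cleaner here because it directly handles the Dirichlet boundary and the time-inhomogeneity in one stroke, whereas your Malliavin argument leaves the ``routine bookkeeping'' for killed, non-autonomous diffusions as a black box. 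The truncation to bounded velocities is also a point you gloss over; it is harmless since $\mathcal K$ is compact, but worth making explicit.
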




\begin{proof}[Proof of Theorem~\ref{non}]
By conditioning on the initial condition, it is sufficient to prove the result with a fixed initial condition $z_0=(x_0,y_0)$ with $x_0\in \mathrm{C}(\tilde x,c)$. Moreover it is sufficient to prove the result for $\delta>0$ small enough. We consider a fixed $\delta>0$ such that $c+\delta<\inf\{U^K(x),x\in[-L_K,L_K]^d \setminus \mathrm{C}(\tilde x,a)\}\leqslant a$, which means \eqref{eq:HKZK} holds and $\{U^K \leqslant c+\delta\}=\mathrm{C}(\tilde x,c+\delta)$ (seeing $\{U^K \leqslant c+\delta\}$ as a subset of $[-L_K,L_K]^d$).

Let $f_0^K\in\mathcal{C}^{\infty}(M_K\times M_K)$ a probability density and $t_b>0$ as in Lemma~\ref{opt} applied with $\delta/2$ instead of $\delta$ such that:\[\mathbb P_{f_0^K}\left(\sup_{t\geqslant t_b} H_K(Z_t^K)\leqslant c+\delta/2\right)>0.\]
In other words, denoting by $\tilde f$ the law at time $t_b$ of the process solution to equation~\eqref{eq2} with initial condition $f_0^K$ and conditioned to $\{H^K(Z_{t_b})\leqslant c+\delta/2\}$, and $(Z^{K,t_b})_{t\geqslant 0}$ the solution to equation~\eqref{eq2} with initial condition $\tilde{f}$ at time $t_b$, 
\[\mathbb{P}_{\tilde{f}}\left(\sup_{t\geqslant 0}H_K(Z^{K,t_b}_t) \leqslant c+\delta/2\right)>0.\]
Let $\mathcal{B}_{t_b}$ be as in Lemma~\ref{acces}. From Lemma~\ref{acces} and the fact $\tilde f$ has a bounded density on its support $\{H^K \leqslant c+\delta/2\}$ (which we see as a subset of $[-L_K,L_K]^{2d}$), 
 there exists $\varepsilon>0$ such that
\[\mathbb{P}_{z_0}\left(B_{t_b},\ Z_{t_b}\in\cdot\right)\geqslant \varepsilon\tilde f.\]
Finally, thanks to \eqref{eq:HKZK},  we conclude that
\begin{align*}
\mathbb{P}_{z_0}\left(  X_t \in \mathrm{C}(\tilde x,c+\delta)\ \forall t\geqslant 0\right) & \geqslant \mathbb{P}_{z_0}\po\mathcal B_{t_b},\   X_t \in \mathrm{C}(\tilde x,c+\delta)\ \forall t\geqslant t_b\pf \\
& \geqslant \varepsilon   \mathbb{P}_{\tilde{f}}\left(\sup_{t\geqslant 0}H(Z^{K,t_b}_t)<c+\delta\right) >0.
\end{align*}
\end{proof}

\section{Auxiliary results}\label{sec:auxiliary}

\subsection{Uniform energy bounds}\label{EnergyBounds}

This section is dedicated to the proof of Lemma~\ref{energybounds}. First, we consider a family of approximation functions $ \eta_m \in \C_c^{\infty}$ for $m\geqslant 1$ in order to justify some PDE computations below. Let
\[\Phi(s) = \left\{
      \begin{array}{ll}
       e^{\frac{1}{s^2-1}}/\int e^{\frac{1}{u^2-1}}\dd u  & \text{for } s\in\left(-1,1\right) \\    
         0 & \text{ for }s\in \R\setminus(-1,1)
      \end{array}
    \right.\]	
and, for $m\geqslant 1$, $\Phi_m(s) = \Phi(s/m)/m$, $\nu_m = \mathbbm{1}_{\left(-\infty,m^2\right]}\ast\Phi_m$ \lj{where $\ast$ denotes the convolution,} and finally, for $z\in\R^{2d}$,
\[ \eta_m\left(z\right) = \nu_m\left(\ln\left(H(z)+1\right)\right)\,.\]
		
\begin{proposition}\label{approx1}
Assume $U\rightarrow +\infty$ at infinity and $\na U$ is bounded.
\begin{itemize}
\item For all $ m\geqslant 1$, $\eta_m\in\C^{\infty}(\R^d)$, has a compact support, and satisfies $0\leqslant \eta_m \leqslant 1$.
\item For all $z\in\R^{2d}$,  $\eta_m(z) \rightarrow 1$ as $m\rightarrow\infty$.
\item There exists $C>0$ such that $L_t\eta_m\leqslant C\beta_t/m$ and $|\na\eta_m|\leqslant C\beta_t/m$ for all  $m\geqslant 1$.
\end{itemize}
\end{proposition}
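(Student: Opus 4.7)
The plan is to verify the three items in sequence: the first two are bookkeeping on the standard mollifier construction, while only the third requires a genuine computation.

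For item (i), $\nu_m = \mathbbm{1}_{(-\infty,m^2]}\ast \Phi_m$ is the convolution of an $L^\infty$ function with a $\mathcal C_c^\infty$ bump, hence $\nu_m \in \mathcal C^\infty(\R,[0,1])$ with $\nu_m \equiv 1$ on $(-\infty,m^2-m]$ and $\nu_m \equiv 0$ on $[m^2+m,+\infty)$. Since $H \in \mathcal C^\infty(\R^{2d})$ with $H+1\geqslant 1$, the composition $\eta_m = \nu_m(\ln(H+1))$ is smooth with values in $[0,1]$; it is compactly supported because $\eta_m$ vanishes where $H \geqslant e^{m^2+m}-1$, and $\{H \leqslant R\}$ is compact for every $R$ thanks to $U(x)\to\infty$ at infinity and $H(x,y)\geqslant |y|^2/2$. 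Item (ii) is immediate: for a fixed $z$, $\ln(H(z)+1)<\infty$, so $\nu_m(\ln(H(z)+1))=1$ as soon as $m^2-m > \ln(H(z)+1)$.

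The substantive step is (iii). Setting $\phi := \ln(H+1)$, the chain rule gives $\na_x \phi = \na U/(H+1)$, $\na_y\phi = y/(H+1)$, and $\Delta_y\phi = d/(H+1) - |y|^2/(H+1)^2$. The transport contribution $y\cdot\na_x \phi - \na U\cdot\na_y\phi$ cancels, and the remaining terms yield
\[L_t\eta_m = -\nu_m'(\phi)\frac{\gamma_t|y|^2}{H+1} + \frac{\gamma_t}{\beta_t}\po \nu_m''(\phi)\frac{|y|^2}{(H+1)^2} + \nu_m'(\phi)\po \frac{d}{H+1} - \frac{|y|^2}{(H+1)^2}\pf\pf .\]
From the rescaling $\Phi_m(s) = \Phi(s/m)/m$ one reads off $\|\nu_m'\|_\infty \leqslant \|\Phi\|_\infty/m$ and $\|\nu_m''\|_\infty \leqslant \|\Phi'\|_\infty/m^2$. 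Combined with the elementary bound $|y|^2/(H+1)\leqslant 2$, the fact that $H+1 \geqslant e^{m^2-m}\geqslant 1$ on the supports of $\nu_m'(\phi)$ and $\nu_m''(\phi)$, the boundedness of $\na U$, and the assumption $\gamma_t\leqslant L\beta_t$ (so also $\gamma_t/\beta_t \leqslant L$), each of the three terms above is bounded in absolute value by $C\beta_t/m$.

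For the gradient, the chain rule gives $\na \eta_m = \nu_m'(\phi)\na\phi$ with $|\na\phi|^2 = (|\na U|^2+|y|^2)/(H+1)^2$, and the same bounds yield $|\na\eta_m|\leqslant C/m$, which can be rewritten as $C\beta_0^{-1}\beta_t/m$ using $\beta_t\geqslant \beta_0$. The main obstacle is really just careful bookkeeping in the expansion of $L_t\eta_m$: the borderline contribution is the transport term $-\nu_m'(\phi)\gamma_t|y|^2/(H+1)$, whose factor $\gamma_t$ is what forces the $\beta_t$ on the right-hand side; all diffusive contributions are in fact of order $1/m$ uniformly in $t$ thanks to $\gamma_t/\beta_t\leqslant L$.
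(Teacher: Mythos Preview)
Your proof is correct and essentially complete. The paper itself does not give a proof but simply cites \cite[Lemma 4]{GenLan}; your direct computation via the chain rule, the explicit expression $\nu_m'(s)=-\Phi_m(s-m^2)$ giving $\|\nu_m'\|_\infty\leqslant \|\Phi\|_\infty/m$ and $\|\nu_m''\|_\infty\leqslant \|\Phi'\|_\infty/m^2$, together with $|y|^2/(H+1)\leqslant 2$ and $\gamma_t\leqslant L\beta_t$, is exactly the intended argument and makes the result self-contained.
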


\begin{proof}
This is \cite[Lemma 4]{GenLan}.
\end{proof}

\begin{proof}[Proof of Lemma~\ref{energybounds}]
We first show the result when $\na U$ and its derivative are bounded, the general case being then obtained by approximating  $U$.

Hence, suppose for now that $\na U$ and its derivative are bounded. In this case, \lj{it can be shown that} the law of the process at time $t$ admits a bounded density $f_t$ such that $(t,z)\mapsto f_t(z) \in \mathcal{C}^{\infty}\left(\R_+\times\R^{2d}\right) $\lj{, see the proof of Lemma~\ref{regu}}. Define $g_t(z)=f_t(z)e^{\beta_t H(z)}$ and $u(t)=\E\left(H(Z_t)\right)$. Since $f$ and $U$ are smooth, so is $g$.
Consider
\[ N(t) = \int_{\R^{2d}} g_t(z)\ln\left(1+g_t(z)\right)e^{-\beta_tH(z)}\dd z\,. \] 
From the inequality $\ln(1+x) \leqslant \ln(x) + 1/x$ for all $x>0$,  
\[N(t) \leqslant \int_{\R^{2d}} f_t(z)\ln(f_t(z)) \dd x\dd y + \beta_t\E(H(Z_t)) + \int_{\R^{2d}} e^{-\beta_tH(z)} \dd z.\]
Since $f_t$ is bounded on $\left[0,T\right]\times\R^{2d}$, and is in $L^1(\dd z)$ for all $t\geqslant 0$, $t\mapsto \int_{\R^{2d}} f_t \ln(f_t ) $ is locally bounded and so is $N(t)$. In order to differentiate $N$, we introduce for $m\geqslant 1$ the approximation
\[ N_m(t) = \int_{\R^{2d}} \eta_m(z)g_t(z)\ln\left(1+g_t(z)\right)e^{-\beta_tH(z)}\dd z .\]
Integrating by parts, we see that the   dual of the generator $L_t$ in $L^2(e^{-\beta_t H})$ is
\begin{equation}\label{eq:L*}
L^{\ast}_{t} = -y\cdot \nabla_x + (-\gamma_t y + \nabla_xU)\cdot \nabla_y + \gamma_t\beta_t^{-1}\Delta_y
\end{equation}
Since $f_t$ solves $\partial_tf_t=L^*_t\left(f_te^{\beta_tH}\right)e^{-\beta_tH}$, $g_t$ solves $\partial_tg_t = \beta_t'Hg_t + L^*_tg_t$. Using that $\eta_m$ is compactly supported for all $m\geqslant 1$, we can differentiate  
\begin{align*}
N_m'(t) &= \int_{\R^{2d}} \eta_m\bigg(-\beta_t'Hg_t\ln(1+g_t)e^{-\beta_t H}  + \partial_tg_t\ln(1+g_t)e^{-\beta_t H} + \frac{g_t}{1+g_t}\partial_tg_te^{-\beta_tH}\bigg)   \\ &= \int_{\R^{2d}} \eta_mL^*_tg_t\left(\ln(1+g_t) + \frac{g_t}{1+g_t}\right)e^{-\beta_tH}  +\int_{\R^{2d}} \eta_m\frac{g_t^2}{1+g_t}\beta_t'He^{-\beta_tH}  .
\end{align*}
Now, using that $g_t\geqslant0$,
\[ \int_{\R^{2d}} \eta_m\frac{g_t^2}{1+g_t}\beta_t'He^{-\beta_tH}   \leqslant \beta_t'\int_{\R^{2d}} f_tH   = \beta_t'u(t).\]
Consider the carr{\'e} du champ operator $\Gamma_t$ associated to $L_t^*$ given by
\begin{equation}\label{gamma}
\Gamma_t(g_1,g_2) := \frac12 \po L_t^*(g_1g_2) - g_2 L_t^*(g_1)-g_1L_t^*(g_2)\pf = \gamma_t\beta_t^{-1} \na_y g_1 \cdot \na_y g_2
\end{equation}
for smooth $g_1,g_2$, and $\Gamma_t(g):=\Gamma_t(g,g)$. 
Using that $e^{-\beta_t H} $ is invariant for  $L_t^*$ so that $\int L_t^* g e^{-\beta_t H} = 0$ for all smooth $g$ and the diffusion property
\[L_t^*(\phi(g)) = \phi'(g)L_t^*(g) + \phi''(g)\Gamma_t(g)\]
for any smooth $\phi$, we get
\begin{align*}
\int_{\R^{2d}} &\eta_mL^*_tg_t\left(\ln(1+g_t) + \frac{g_t}{1+g_t}\right)e^{-\beta_tH}  \\&= \int_{\R^{2d}} \eta_mL^*_tg_t\left(\ln(1+g_t) + \frac{g_t}{1+g_t}\right)e^{-\beta_tH}  - \int_{\R^{2d}} L^*_t(\eta_mg_t\ln(1+g_t)) e^{-\beta_t H}  \\&= -\int_{\R^d\times\R^d} \eta_m\Gamma_t(g_t)\left( \frac{1}{1+g_t} + \frac{1}{(1+g_t)^2}\right)e^{-\beta_tH} - \int_{\R^{2d}} L^*_t(\eta_m)g_t\ln(1+g_t)e^{-\beta_tH}  \\&\ \ \ \ \ \ \ - 2\int_{\R^{2d}} \Gamma_t(\eta_m,g_t\ln(1+g_t)) e^{-\beta_tH}\,.
\end{align*}
The first term is negative (since $\eta_m \Gamma_t(g_t)\geqslant 0$) and the others are bounded as follows:
\begin{align*}
 - \int_{\R^d\times\R^d} L^*_t(\eta_m)g_t\ln(1+g_t)e^{-\beta_tH} - 2 \int_{\R^d\times\R^d}& \Gamma(\eta_m,g_t\ln(1+g_t)) e^{-\beta_tH}  \\
 &  =  \int_{\R^d\times\R^d}  \eta_mL^*_t(g_t\ln(1+g_t)) e^{-\beta_tH} 
 \\ &= \int_{\R^d\times\R^d} L_t(\eta_m)g_t\ln(1+g_t)e^{-\beta_tH}\dd x\dd y \\ &\leqslant \frac{C\beta_t}{m} N(t)
\end{align*}
thanks to Proposition~\ref{approx1}. 
We conclude that for all $m\geqslant 1$, $0\leqslant s \leqslant t$ :
\[ N_m(t) - N_m(s) \leqslant \int_s^t \beta_r'u(r) + \frac{C\beta_r}{m}N(r)  \dd r. \]
By the monotone convergence theorem, $N_m(t) \rightarrow N(t)$ as $m\rightarrow\infty$, hence :
\[ N(t) - N(s) \leqslant \int_s^t \beta_r'u(r) \dd r. \]
On the other hand, from the variational formula for the entropy:
\[
\int_{\R^{2d}} f_t\ln f_t     = \max\left\{\int_{\R^{2d}} f_t\ln g ;\ g:\R^{2d}\mapsto\R_+,\ \int_{\R^{2d}}g = 1 \right\}\,,
\]
we get with $g_0 = e^{-\alpha_0H }/ \mathcal{Z}_{\alpha_0}$, where $ \mathcal{Z}_{\alpha_0} = \int e^{-\alpha_0H }$,
\[ N(t)   \geqslant \beta_tu(t) + \int_{\R^{2d}} f_t\ln f_t   
  \geqslant \beta_tu(t) + \int_{\R^{2d}} f_t\ln g_0  = (\beta_t-\alpha_0)u(t) - \ln(\mathcal{Z}_{\alpha_0}).\]
As a consequence, writing $\phi(t)=N(t) + \ln(\mathcal{Z}_{\alpha_0})$ for $t\geqslant 0$, 
\[\phi(t)\leqslant \phi(0) + \int_0^t \frac{\beta'_s}{\beta_s-\alpha_0}\phi(s)\dd s .\]
Then Gronwall's lemma gives $\phi(t)\leqslant \phi(0)\frac{\beta_t - \alpha_0}{\beta_0-\alpha_0}$ and using again that $u(t) \leqslant \phi(t) /(\beta_t-\alpha_0)$ concludes the proof in the case where $\na U$ and all its derivatives are bounded.

Let us now consider the general case, without any assumption on $\na U$. For $n$ large enough, we \lj{fix some $U_n\in\C^{\infty}(\R^d)$ equal to $U$ on $B(0,n)$, to $|x|$ outside of $B(0,n+1)$ and such that for all $x\in\R^d$, $U_n(x)\geqslant \min(U(x),|x|)-1$}. Let $(Z^n_t)_{t\geqslant 0}=(X^n_t,Y_t^n)_{t\geqslant 0}$ be the diffusion defined by Equation~\eqref{eq} where $U$ is replaced by $U_n$ and starting from the same initial distribution $f_0$. By design, $Z_{t}$ and $Z_{t}^n$ are equal up to the time $\tau_n = \inf\left\{t\geqslant 0; |X_{t}|\geqslant n\right\}$. As $Z$ does not explode in finite time, $\lim_n\tau_n=\infty$ and, for all $t\geqslant 0$, $\lim_nH_n(Z^n_{t})=H(Z_{t})$ almost surely, where $H_n(x,y)=U_n(x)+|y|^2/2$. By Fatou's lemma, for all $t\geqslant 0$,
\begin{multline*}
\E(H(Z_t)) = \E\left(\liminf_{n\rightarrow\infty} H_n(Z^n_t)\right) \leqslant \liminf_{n\rightarrow\infty} \E(H_n(Z^n_t)) \leqslant \liminf_{n\rightarrow\infty} \frac{\kappa^{\beta_0,n}(f_0)+\ln(\mathcal{Z}^n_{\alpha_0})}{\beta_0-\alpha_0}
\end{multline*}
where $\kappa^{\beta_0,n}(f_0)$ is define as $\kappa^{\beta_0,n}(f_0)$ but with $U$ replaced by $U_n$. Now, since $f_0$ is compactly supported, $\kappa^{\beta_0,n}(f_0)$ is independent of $n$ for $n$ large enough. Finally, using that $e^{-\beta_0 U_n(x)} \leqslant e^{\alpha_0}(e^{-\alpha_0 U(x)}+e^{-\alpha_0 |x|})$,  we can apply the Dominated Convergence Theorem to get that $ \mathcal{Z}^n_{\alpha_0} \rightarrow  \mathcal{Z}_{\alpha_0}$  as $n\rightarrow \infty$, which concludes.

\end{proof}
\subsection{Small time regularisation}\label{sec:regular}


\begin{proof}[Proof of Lemma~\ref{changcondinit}]
	It is enough to show the first point for $\na U$ bounded. Indeed, for all $\tilde{U}$ equal to $U$ on $\left\{U\leqslant A+1\right\}$, and $(\tilde{Z})_{t\geqslant 0}$ the corresponding process, we have the equality 
	\[\left\{\sup_{t\leqslant t^*}H(Z_t)\leqslant A+1\right\} = \left\{\sup_{t\leqslant t^*}\lj{\tilde H}(\tilde{Z}_t)\leqslant A+1\right\}.\] 
	We only need a bound on $\sup_{t\leqslant t^*}|Y_t-y_0|$ because :
	\[ \sup_{t\leqslant t^*}|U(X_{t})-U(x_0)|\leqslant \|\na U\|_{\infty}t^*\sup_{t\leqslant t^*}|Y_t|.\]
From
	\[ Y_t = e^{-\int_0^t\gamma_s\dd s}\left( y_0 + \int_0^t \na U(X_s)e^{\int_0^s \gamma_u\dd u} \dd s + \int_0^t \sqrt{2\gamma_s\beta_s^{-1}} e^{\int_0^s\gamma_u\dd u}\dd B_s\right)\,, \]
	we get
	\[ \sup_{s\leqslant t} |Y_s| \leqslant |y_0| + \|\na U\|_{\infty}t + \sup_{s\leqslant t}|W_s|\,,\]
	where $W_t=e^{-\int_0^t\gamma_s\dd s}\int_0^t \sqrt{2\gamma_s\beta_s^{-1}} e^{\int_0^s\gamma_u\dd u}\dd B_s$. Since 	$(W_t)_{0\leqslant t\leqslant 1}$ is a $L^2$-martingale,   Doob's inequality implies  
	\[ \E\left(\sup_{0\leqslant s\leqslant t} |W_s|^2\right) \leqslant 4\E(W_t^2) = 4e^{-2\int_0^t\gamma_s\dd s}\int_0^t 2\gamma_s\beta_s^{-1} e^{2\int_0^s\gamma_u\dd u}\dd s \leqslant 8Lt. \]
	Now we can take $t^*_1$ such that $t^*_1\|\na U\|_{\infty}+\sup_{t\leqslant t^*_1}W_t\leqslant \frac{1}{8\sqrt{A}}$ with probability larger than $1-\varepsilon$. Thus, with probability at least $1-\varepsilon$, we have :
	\[ \sup_{t\leqslant t^*_1} |Y_t|^2 \leqslant |y_0|^2 + \frac{1}{4} + \frac{1}{32A}.\]
	Since $A>1$, this is less than $|y_0|^2+\frac{1}{2}$ and we write $t^*_2 = t^*_1\wedge (2\|\na U\|_{\infty}\sqrt{1+|y_0|^2})^{-1}$. 
	
	For the second point, fix some $t^*\leqslant t^*_2$ and let, for $t\leqslant t^*$, $(\bar{X}_t,\bar{Y}_t)$ be the solution of the system
	\begin{equation*}
		\left\{
		\begin{aligned}
			&\dd \bar{X}_t = \bar{Y}_t\dd t\\    
			&\dd \bar{Y}_t = \sqrt{2\gamma_t\beta_t^{-1}}\dd B_t\\
			&(X_0,Y_0)=(x_0,y_0),\\
		\end{aligned}
		\right.
	\end{equation*} 
	let
	\[N_t=-\int_0^t\frac{\sqrt{\beta_s}}{\sqrt{2\gamma_s}} \left(\na U(\bar{X}_s) + \gamma_s\bar{Y}_s \right)\dd B_s\]
	and $\mathbb{Q}=e^{N_{t^*}-\frac{1}{2}\left\langle N\right\rangle_{t^*}}\mathbb{P}$. 	Ito's formula gives for $t\leqslant t^*$ :
	\begin{multline}\label{Girsanov}
		\frac{\beta_t}{2\gamma_t}\bar{Y_t}\cdot(\na U(\bar{X}_t) + \frac{\gamma_t}{2}\bar{Y}_t) - \frac{\beta_0}{2\gamma_0}y_0\cdot (\na U(x_0) + \frac{\gamma_0}{2}y_0) \\= -N_t + \int_0^t \left( \frac{\beta_s}{2\gamma_s}\Delta U(\bar{X}_s)|\bar{Y_s}|^2 + \frac{\beta'_s}{2\gamma_s}\bar{Y_s}\cdot (\na U(\bar{X}_s) + \frac{\gamma_s}{2}\bar{Y}_s)-\frac{\beta_s\gamma'_s}{2\gamma_s^2}\bar{Y_s}\cdot \na U(\bar{X}_s) + \frac{\beta_s}{4} \right) \dd s.
	\end{multline}
	Using $\beta'\leqslant c$ and $\beta_t\leqslant \beta_0+\frac{t^*}{c}$, we then get $\mathbbm{1}_{\B}N_t\leqslant C(\beta_0+1)$ where $C$ is independent from $\beta_0$ and $\gamma$ and depends only on $\sup_{\left\{U\leqslant A+1\right\}}|\na U|$, $\sup_{\left\{U\leqslant A+1\right\}}|\Delta U|$, $\kappa$ and $c$.
	
	Girsanov's theorem yields that, under the change of probability $\mathbb{P}\rightarrow\mathbb{Q}$,  $(\bar{X}_t,\bar{Y}_t)_{t\leqslant t^*}$ is a solution to the original Equation~\eqref{eq} and, for all $\phi \geqslant 0$, $t\leqslant t^*$,
	\[ \E(\phi(X_t,Y_t)\mathbbm{1}_{\B})=\E(e^{N_{t^*}-\frac{1}{2}\left\langle N\right\rangle_{t^*}}\phi(\bar{X}_t,\bar{Y}_t)\mathbbm{1}_{\B})\leqslant Ce^{C^1_{A}(\beta_0+1)} \E(\phi(\bar{X}_t,\bar{Y}_t)\mathbbm{1}_{\B}) \,.\]
	 It only remains to show that $(\bar{X}_t,\bar{Y}_t)$ has a density bounded by some $e^{C\beta_0}$. As a  Gaussian process,		the density of $(\bar{X}_t,\bar{Y}_t)$ is bounded by $(2\pi)^{-d/2}/\det(Q_d)$ where $Q_d$ is the covariance matrix of the process at time $t^*$. By independence, $\det(Q_d)=\left(\det(Q_1)\right)^d$, and a straightforward computation yields
		\[ \det(Q_1) = 8\int_0^{t^*} \gamma_s\beta_s^{-1}\dd s \int_0^{t^*}(t^*-s)\int_0^s\gamma_u\beta_u^{-1}\dd u\dd s -4\left( \int_0^{t^*}\int_0^s \gamma_u\beta_u^{-1}\dd u\dd s \right)^2\,.\]
		Now, let $\varpi, \lambda>1$ be such that $\varpi\lambda^2 < 4/3$ and take $t^*$ small enough (depending only on $c$ and $\kappa$) so that uniformly in $\beta_0\geqslant 1$ 
		\[ \beta_0\leqslant \beta_{t^*} \leqslant \varpi\beta_0 \quad \text{and}\quad \lambda^{-1}\gamma_0\leqslant \gamma_{t^*} \leqslant\lambda\gamma_0.\]
This choice ensures 
		\[ \det(Q_1) \geqslant \kappa\beta_0^{-1}{t^*}^4\left( \frac{4}{3\varpi\lambda}-\lambda^2 \right) \]
		and thus the result.
\end{proof}

%

\begin{proof}[Proof of Lemma~\ref{acces}]
The first part follows from the controlability of the process, see \cite[proposition 5]{Pierro}. The second part follows from the fact the distribution of the process killed when it leaves $\mathrm C(\tilde x,c+\delta\}\times\R^d$ solves a parabolic hypoelliptic Dirichhet problem, hence has a continuous positive density.
In fact, in the time-homogeneous case, this is exactly  \cite[Theorem 2.20]{ramiletal}. In our time-inhomogeneous case, we can proceed as follows. First, for some $A'>\max(|y_0|,A)$ large enough, consider $\mathcal{D}$ the interior of $\mathrm C(\tilde x,c+\delta\}\times[-A',A']$ and $\tau=\inf\{s\geqslant0, Z_t\notin \mathcal D\}$. Then, for all $t>0$,
\[\mathbb P_{z_0}\po\mathcal B_t,\ Z_t\in\cdot\pf \geqslant \mathbb P_{z_0}\po\tau> t,\ Z_t\in\cdot\pf :=p_t^{\mathcal D}(z_0,\cdot)\,.\]
It is well-known that $p_t^{\mathcal D}$ solves a parabolic equation with generator $L_t$ on $\mathcal D$. Since $L_t$ is hypoelliptic, $p_t^{\mathcal D}(z_0,\cdot)$ has a continuous density. Finally, the coefficients of $L_t$ being smooth and bounded on $\mathcal D$, $p_t^{\mathcal D}(z_0,\cdot)$ being not identically zero and the process being controllable, we can use the strong maximum principle of \cite[Theorem 6.1]{StroockVaradhan} to deduce that $p_t^{\mathcal D}(z_0,\cdot)$ cannot take the value $0$ in $\mathcal D$. As a continuous positive function, $p_t^{\mathcal D}(z_0,\cdot)$ is thus lower bounded by a positive constant over any compact subset of $\mathcal D$, which concludes.

\end{proof}

\section{ \texorpdfstring{$L^2$}{L2}-hypocoercivity}\label{hypo1}

In this section we use the definitions and notations of Section~\ref{posicompset}

The hypocoercivity issue arises in the proof of Proposition~\ref{evol0} when computing the evolution of the $L^2$-norm of $h_t^K$. In the standard elliptic case, as in \cite{HoStKu}, one would simply differentiate this quantity and concludes with a Poincar{\'e} inequality of the form :\[ \int \po h -\int h\dd\mu\pf^2 \dd \mu \leqslant C\int \Gamma(h) \dd\mu \]
where $\Gamma$ is the carr{\'e} du champ associated to the process. However, in the kinetic case, as we saw  in \eqref{gamma}, $\Gamma_t(f) = \gamma_t\beta_t^{-1}|\na_yf|^2$, which means such an inequality cannot hold since, for non-constant functions of $x$, the left hand side is positive while the right hand side vanishes. For this reason, we work with a modified norm as in \cite{Vil}.

More precisely, at a formal level, the proof of Proposition~\ref{evol0} is the following: writing $\phi_{t}(h) = |(\na_x+\na_y)h|^2 + \sigma_t h^2$ with $\sigma_t=\frac{1}{2} + 2\sqrt{\gamma_t^{-1}\beta_t}(1+\|\na U^K\|_{\infty} + \gamma_t)^2$, we introduce
\[\tilde N(t) =   \int_{M_K\times\R^d} \phi_{t}\left(h_t^K-1 \right)\dd\mu^K_{\beta_t}\,,\qquad \tilde I(t) =   \int_{M_K\times\R^d}  \left|\na h_t^K\right|^2\dd\mu^K_{\beta_t}\,.\]
Differentiating $\tilde N$, one can (formally) check that
\[\tilde N'(t) \leqslant - \frac12 \tilde I(t) + C \beta_t' (1+\beta_t) \tilde N(t)\]
for some constant $C>0$, the definition of $\tilde N$ being motivated by the $-\tilde I$ term in this inequality. Using a Poincar\'e inequality for $\mu^K_{\beta_t}$ (with the full gradient rather than $\Gamma_t$) with a constant $\lambda(\beta_t)$ that scales as $1/t^{c^*/c}\gg \beta_t' \beta_t$, we get that $\tilde N'(t) \leqslant 0$ for $t$ large enough (or equivalently for all $t\geqslant0$ if $\beta_0$ is large enough), hence $\tilde N$ is bounded. We conclude the proof of Proposition~\ref{evol0} by bounding 
\[\|h_t\|_{L^2(\mu_{\beta_t})} \leqslant 1 + \|h_t-1\|_{L^2(\mu_{\beta_t})} \leqslant 1 + 2 \tilde N(t)\,.\]

In the remainder of this section, this formal proof is made rigorous and we give the details of the computations.

\pierre{
\begin{remark}\label{rem:err_truncation}
The proof of \cite{Pierro}  is based on a similar argument but, as mentioned in the introduction, and as noticed by the authors of \cite{GenLan}, it contains an error. The problem occurs when it comes to justify rigorously the derivation of $\tilde N$. In \cite{Pierro}, a compactly-supported truncation function is added within the integral. This leads to additionnal terms in $\partial_t \tilde N$. One of these terms is said to be non-positive in  \cite[Lemma 16]{Pierro}, which is false (there is a sign error). In \cite{GenLan},  the authors add a small elliptic term to the dynamics, use elliptic regularity results to justify the computation and then let the small ellipticity parameter vanish afterwards. In this section, we make a correct version of the argument of \cite{Pierro}, combining some bounds on the density of the process (Lemma~\ref{regu} below) and some moment estimates (Lemma~\ref{espexp} below). Also, notice that, by comparison with \cite{Pierro,GenLan}, we have already reduced the problem to a compact state space for the position $x$, the non-compact part of the dynamics only concerns the velocity.
\end{remark}
}

 First, we need a few preliminary lemmas. We start by stating the following Poincar{\'e} inequality (with the full gradient) :
\begin{proposition}\label{poin}
For $U^K \in \mathcal C^\infty(M_K,\R)$  there exists $\lambda:\R_+\rightarrow\R_+$ such that for all $f\in\C^{\infty}(M_K\times\R^d)$ with compact support :
\[ \lambda(\beta)\int_{M_K \times \R^d} \left(f-\int_{M_K\times\R^d} f\dd\mu_{\beta}^K \right)^2\dd\mu_{\beta}^K \leqslant \int_{M_K\times\R^d}|\nabla f|^2 d\mu^K_{\beta}\,, \]
and moreover
\[ \lim_{\beta\rightarrow\infty}\frac{1}{\beta}\ln(\lambda(\beta)) = -c^*(U^K) \]
where $c^*(U^K)$ is defined as $c^*$ with $U$ replaced by $U^K$.
\end{proposition}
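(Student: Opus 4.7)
The plan is to exploit the fact that $\mu_\beta^K$ is the product measure $\nu_\beta^x \otimes \nu_\beta^y$, where $\nu_\beta^x$ has density proportional to $e^{-\beta U^K(x)}$ on the compact torus $M_K$ and $\nu_\beta^y$ is the Gaussian measure with density proportional to $e^{-\beta |y|^2/2}$ on $\R^d$. Because the right-hand side of the inequality involves the full gradient $\nabla=(\nabla_x,\nabla_y)$, the standard tensorization principle for Poincar\'e inequalities reduces the problem to proving such an inequality separately for each marginal: if $\lambda_x(\beta)$ and $\lambda_y(\beta)$ are admissible Poincar\'e constants for $\nu_\beta^x$ and $\nu_\beta^y$ respectively, then $\lambda(\beta):=\min(\lambda_x(\beta),\lambda_y(\beta))$ works for $\mu_\beta^K$.

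For the velocity marginal $\nu_\beta^y$, the potential $|y|^2/2$ is uniformly convex with Hessian $I_d$, so the Bakry--\'Emery criterion yields $\lambda_y(\beta)=\beta$. In particular $\frac{1}{\beta}\ln\lambda_y(\beta)\to 0$ as $\beta\to\infty$, which is harmless in the asymptotic.

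For the position marginal $\nu_\beta^x$ on the compact flat torus $M_K$, I would invoke the classical low-temperature asymptotics of the spectral gap of Gibbs measures on compact manifolds, established by Holley, Kusuoka and Stroock in \cite{HoStKu}: there exists $\lambda_x(\beta)>0$ such that $\nu_\beta^x$ satisfies a Poincar\'e inequality with constant $\lambda_x(\beta)$ and
\[\frac{1}{\beta}\ln\lambda_x(\beta) \underset{\beta\to\infty}\longrightarrow -c^*(U^K).\]
This is the technically deep ingredient, based on Freidlin--Wentzell-type large deviation estimates together with quasimodal test functions concentrated in the deepest well; I do not attempt to reprove it here.

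Combining these two bounds through tensorization yields the Poincar\'e inequality for $\mu_\beta^K$ with $\lambda(\beta)=\min(\lambda_x(\beta),\beta)$. Since $c^*(U^K)\geqslant 0$, the linear factor $\beta$ contributes $0$ to the normalized logarithm, so $\frac{1}{\beta}\ln\lambda(\beta)\to -c^*(U^K)$, as claimed. The only genuine obstacle is the compact-manifold asymptotic itself, which is quoted directly from \cite{HoStKu}; the tensorization step and the Gaussian part are routine.
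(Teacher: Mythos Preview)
Your proof is correct and follows exactly the same approach as the paper: cite \cite{HoStKu} for the asymptotic of the Poincar\'e constant of the $x$-marginal on the compact torus, use the Bakry--\'Emery constant $\beta$ for the Gaussian $y$-marginal, and combine via tensorization. The paper's own proof is essentially these three sentences.
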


\begin{proof}
	The fact that the Gibbs probability measure $\tilde{\mathcal Z}e^{-\beta U^K} $ satisfies a Poincar{\'e} inequality with a constant $\lambda(\beta)$ satisfying \[ \lim_{\beta\rightarrow\infty}\frac{1}{\beta}\ln(\lambda(\beta)) = -c^*(U^K) \] corresponds to \cite[Theorem 1.14]{HoStKu}. The Gaussian measure $\mathcal{N}(0,\beta I)$ also satisfies a Poincar{\'e} inequality with constant $\beta$, and we conclude with the tensorization property of the Poincar{\'e} inequality, see \cite[Proposition 4.3.1]{BakryGentilLedoux}.
\end{proof}

For $\mu$ a probability measure, $H^1(\mu)$ \lj{denotes} the usual Sobolev space of functions in $L^2(\mu)$ with derivative in $L^2(\mu)$.

\begin{proposition}\label{Vila}
There exists $C>0$ such that for all $ \beta>1$ and $g\in H^1\left(\mu_\beta^K\right)$, 
\[\int y^2g(x,y)^2 \mu_\beta^K(\dd x\dd y) \leqslant  C\left(\int g(x,y)^2\mu_\beta^K(\dd x\dd y) + \int |\na_yg(x,y)|^2\mu_\beta^K(\dd x\dd y)\right).\]
\end{proposition}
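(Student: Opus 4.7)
The plan is to exploit the fact that $\mu_\beta^K$ is a product measure: its density factorizes as $e^{-\beta U^K(x)}\, e^{-\beta|y|^2/2}$ up to normalization, with $M_K$ compact and the $y$-marginal being the centered Gaussian $\nu_\beta = \mathcal N(0,\beta^{-1} I_d)$. Since all that is needed is a weighted Poincar\'e-type estimate in the velocity variable, it suffices to establish, for each fixed $x$, an inequality of the form
\[ \int_{\R^d} |y|^2 g(x,y)^2 \,\dd\nu_\beta(y) \leqslant C \po \int_{\R^d} g(x,y)^2\,\dd\nu_\beta(y) + \int_{\R^d} |\na_y g(x,y)|^2\,\dd\nu_\beta(y)\pf, \]
with a constant $C$ uniform in $\beta>1$, and then integrate in $x$ against the Gibbs part.

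The key algebraic identity is Gaussian integration by parts: from $y_i e^{-\beta |y|^2/2} = -\beta^{-1}\partial_{y_i}(e^{-\beta |y|^2/2})$, applied with test function $y_i g^2$, one obtains for smooth compactly supported $g$ (in the $y$ variable, with $x$ fixed)
\[ \int y_i^2 g^2 \,\dd\nu_\beta = \frac{1}{\beta}\int g^2 \,\dd\nu_\beta + \frac{2}{\beta}\int y_i g \,\partial_{y_i}g \,\dd\nu_\beta. \]
I would then bound the cross term by the Cauchy--Schwarz (Young) inequality with parameter $1/2$, namely
\[ \frac{2}{\beta}\left| \int y_i g\, \partial_{y_i} g\, \dd\nu_\beta \right| \leqslant \frac{1}{2}\int y_i^2 g^2\, \dd\nu_\beta + \frac{2}{\beta^2} \int (\partial_{y_i} g)^2\, \dd\nu_\beta, \]
and absorb the first term on the left. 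Summing over $i=1,\dots,d$ and using $\beta>1$ to replace $1/\beta$ and $1/\beta^2$ by $1$ yields the claim with $C=\max(4d,8)$.

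Finally, to cover general $g\in H^1(\mu_\beta^K)$ rather than smooth compactly supported test functions, I would use the density of $\mathcal C_c^\infty(M_K\times\R^d)$ in $H^1(\mu_\beta^K)$ (noting that $M_K$ is compact, so only approximation in the $y$-direction is needed, which is standard for Gaussian measures). Fatou's lemma applied to a sequence of approximants then extends the inequality to arbitrary $g\in H^1(\mu_\beta^K)$, with the possibility that both sides are infinite -- but the inequality being trivially valid in that case. There is no genuine obstacle here; the estimate is essentially a textbook computation with the Gaussian measure, and the only subtle point is to keep track of the exponent of $\beta$ and check that it does not blow up as $\beta\to\infty$, which is ensured by the $\beta>1$ hypothesis.
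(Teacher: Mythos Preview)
Your proposal is correct and follows essentially the same approach as the paper: Gaussian integration by parts via $y_i e^{-\beta|y|^2/2} = -\beta^{-1}\partial_{y_i} e^{-\beta|y|^2/2}$, then Young's inequality to absorb the cross term, then integration in $x$. The only cosmetic differences are that the paper performs the computation with the full vector $y$ rather than component-wise, and does not spell out the density argument for general $g\in H^1(\mu_\beta^K)$.
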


\lj{\begin{proof}
This is a particular case of \cite[Lemma A.18]{Vil}. \pierre{We give a short proof for completeness.} 
 Notice that $ye^{-\beta |y|^2/2}=-\beta^{-1}\na e^{-\beta |y|^2/2}$. Hence an integration by parts and Young's inequality yield, \pierre{for any $x$,}
\begin{align*}
\lefteqn{\int_{\R^d} y^2 g^2\pierre{(x,y)} e^{-\beta |y|^2/2} \dd y}\\
&= -\beta^{-1}\int_{\R^d}  g^2\pierre{(x,y)} y\cdot\na e^{-\beta |y|^2/2} \dd y\\ &= \beta^{-1}\int_{\R^d}  \na\cdot (g^2\pierre{(x,y)} y) e^{-\beta |y|^2/2} \dd y \\ &= d\beta^{-1}\int_{\R^d}  g^2\pierre{(x,y)} e^{-\beta |y|^2/2} \dd y + \beta^{-1}\int_{\R^d} 2g\na g\pierre{(x,y)}\cdot y e^{-\beta |y|^2/2} \dd y \\ &\leqslant d\beta^{-1}\int_{\R^d}  g^2\pierre{(x,y)} e^{-\beta |y|^2/2} \dd y + \beta^{-2}\int_{\R^d} 2|\na g\pierre{(x,y)}|^2 e^{-\beta |y|^2/2} \dd y \\ &\qquad + \frac{1}{2}\int_{\R^d} y^2 g^2\pierre{(x,y)} e^{-\beta |y|^2/2} \dd y,
\end{align*}
and thus 
\[\int_{\R^d} y^2 g^2\pierre{(x,y)} e^{-\beta |y|^2/2} \dd y\leqslant 2d\beta^{-1}\int_{\R^d}  g^2\pierre{(x,y)} e^{-\beta |y|^2/2} \dd y + 4\beta^{-2}\int_{\R^d} |\na g\pierre{(x,y)}|^2 e^{-\beta |y|^2/2} \dd y .\]
\pierre{Conclusion follows by integrating with respect to $x$.}
\end{proof}}

The  next  two lemmas will be used in forthcoming computations to justify that some quantities are finite and \lj{therefore allowing to interchange differentiation and integration}.

\begin{lemma}\label{espexp}
Fix some $K>0$ and any $c>0$. Then for all $\alpha>0$ and initial condition $f_0^K\in\mathcal{C}^{\infty}$ with compact support, there exists $b_{\alpha,K}$ such that if $\beta_0\geqslant b_{\alpha,K}$, then \[t\mapsto\E_{f_0^K}\left(e^{(\beta_t-\alpha) H_K\left(Z^K_t\right)}\right) \]
is finite and locally bounded.
\end{lemma}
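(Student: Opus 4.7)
The plan is to apply It\^o's formula to the time-dependent test function $\phi(t,z) := e^{(\beta_t-\alpha)H_K(z)}$ and to establish a pointwise bound of the form $(\partial_t + L_t)\phi \leqslant C(1+\beta_t)\phi$, provided $\beta_0$ is taken large enough depending on $\alpha$, $K$, $\kappa$ and $c$. The direct computation, using the generator \eqref{gen} together with $\nabla_y H_K = y$, $\Delta_y H_K = d$, and the fact that the cross terms $\lambda_t y\cdot\nabla U^K\,\phi$ produced by $y\cdot\nabla_x\phi$ and $-\nabla U^K\cdot\nabla_y\phi$ cancel, yields, writing $\lambda_t:=\beta_t-\alpha$,
\[
L_t\phi \;=\; \lambda_t\gamma_t\phi\left(-\tfrac{\alpha}{\beta_t}|y|^2 + \tfrac{d}{\beta_t}\right),\qquad \partial_t\phi \;=\; \beta_t'H_K(z)\phi \;\leqslant\; \beta_t'\bigl(\sup\nolimits_{M_K}U^K + \tfrac12|y|^2\bigr)\phi.
\]
The coefficient of $|y|^2\phi$ in $(\partial_t+L_t)\phi$ is thus $\beta_t'/2 - \lambda_t\gamma_t\alpha/\beta_t$. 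The logarithmic form of $\beta$ gives $\beta_t'\beta_t = \ln(e^{c\beta_0}+t)/(c^2(e^{c\beta_0}+t))$, which is uniformly bounded and can be made arbitrarily small by taking $\beta_0$ large, while $\lambda_t\gamma_t \geqslant \kappa(\beta_0-\alpha)$ by Assumption~\ref{hyp}. Hence for $\beta_0 \geqslant b_{\alpha,K}$ suitably chosen, the $|y|^2$ coefficient is non-positive, and the remaining drift $\beta_t'\sup_{M_K}U^K + \lambda_t\gamma_t d/\beta_t$ is bounded by $C(1+\beta_t)$ using $\gamma_t \leqslant L\beta_t$.

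To turn this formal calculation into a rigorous estimate on the a priori unknown quantity $\E_{f_0^K}(\phi(t,Z_t^K))$, I introduce the stopping time $\tau_N := \inf\{t \geqslant 0 : |Y_t^K| \geqslant N\}$. Since $U^K$ is bounded on $M_K$, by Proposition~\ref{defined} applied to the process on the torus the process does not explode, so $\tau_N \uparrow \infty$ almost surely. On $[0,t\wedge\tau_N]$ the function $\phi(s,Z_s^K)$ is bounded, hence It\^o's formula applies in the classical form, and taking expectations kills the local martingale part to give
\[
\E_{f_0^K}\bigl(\phi(t\wedge\tau_N,Z_{t\wedge\tau_N}^K)\bigr) \;\leqslant\; \E_{f_0^K}\bigl(\phi(0,Z_0^K)\bigr) + C\int_0^t (1+\beta_s)\, \E_{f_0^K}\bigl(\phi(s\wedge\tau_N,Z_{s\wedge\tau_N}^K)\bigr)\dd s.
\]
Gr\"onwall's inequality yields an $N$-independent upper bound of the form $\E_{f_0^K}(\phi(0,Z_0^K))\exp\bigl(C\int_0^t(1+\beta_s)\dd s\bigr)$, which is finite since $f_0^K$ is compactly supported (so $\phi(0,\cdot)$ is bounded on the support of $f_0^K$) and $s\mapsto\beta_s$ is locally integrable, and which is locally bounded in $t$. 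Fatou's lemma as $N\to\infty$ then transfers the bound to $\E_{f_0^K}(\phi(t,Z_t^K))$, proving the lemma.

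The main obstacle is the choice of the threshold $b_{\alpha,K}$: the dissipation $-\lambda_t\gamma_t\alpha|y|^2/\beta_t$ produced by the velocity noise must dominate the positive contribution $\beta_t'|y|^2/2$ arising from differentiating the time-dependent weight $e^{(\beta_t-\alpha)|y|^2/2}$. This comparison forces $\beta_0$ large and relies crucially on the logarithmic slow growth of $\beta$ together with the lower bound $\gamma_t \geqslant \kappa$. Once this coercivity in $|y|^2$ is achieved, the rest is a routine time-dependent Lyapunov argument, greatly simplified by the compactness of $M_K$ which prevents any contribution from the behaviour of $U^K$ at infinity.
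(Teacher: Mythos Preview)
Your proof is correct and follows the same strategy as the paper: apply It\^o's formula with stopping times to the time-dependent Lyapunov function $\phi(t,z)=e^{(\beta_t-\alpha)H_K(z)}$, bound $(\partial_t+L_t)\phi$ pointwise, then conclude via Gr\"onwall and Fatou. Your version is in fact slightly cleaner than the paper's: you correctly notice that the cross terms $\lambda_t\,y\cdot\nabla U^K\,\phi$ produced by $y\cdot\nabla_x\phi$ and by $-\nabla U^K\cdot\nabla_y\phi$ cancel, whereas the paper retains a spurious $(\beta_t-\alpha)y\cdot\nabla U^K$ contribution which it then controls by Young's inequality, picking up an extra $|y|^2/(\beta_t-\alpha)$ term and a correspondingly more complicated condition on $\beta_0$. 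The two arguments are otherwise identical in structure and output.
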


\begin{proof}
Write $\tau_N=\inf\left\{t\geqslant0, H_K(Z^K_t)\geqslant N\right\}$ and $\phi_t(x,y)=e^{(\beta_t-\alpha)H_K(x,y)}$. For $\beta_0>\alpha$, we compute for $t\in\left[0,T\right]$ :
\begin{align*}
&\partial_t\phi_t + L_t\phi_t \\&=\bigg(\beta'_tH_K + (\beta_t-\alpha)y.\na U^K-\gamma_t\left(\beta_t-\alpha\right)y.\na_yH_K \\ &\qquad +\beta_t^{-1}\gamma_t\left(\left(\beta_t-\alpha\right)\Delta_yH_K + \left(\beta_t-\alpha\right)^2|\na_yH_K|^2 \right)\bigg)\phi_t \\& \leqslant \bigg( \beta'_t\left(\|U^K\|_{\infty}+\frac{y^2}{2}\right) +(\beta_t-\alpha)^3\|\na U^K\|_{\infty} + \frac{y^2}{\beta_t-\alpha} -\gamma_t\left(\beta_t-\alpha\right)y^2 \\& \qquad+ \beta_t^{-1}\gamma_t\left(d\left(\beta_t-\alpha\right) + \left(\beta_t-\alpha\right)^2y^2\right) \bigg)\phi_t \\ &\leqslant \bigg( \beta'_0\|U^K\|_{\infty} +(\beta_T-\alpha)^3\|\na U^K\|_{\infty}+ Ld\left(\beta_T-\alpha\right)  + \left( \frac{\beta'_t}{2} + \frac{1}{\beta_t-\alpha} - \frac{\gamma_t\alpha\left(\beta_t-\alpha\right)}{\beta_t} \right)y^2 \bigg)\phi_t.
\end{align*}
We can choose $\beta_0$ great enough so that $ \frac{\beta'_t}{2} +\frac{1}{\beta_t-\alpha} - \frac{\gamma_t\alpha(\beta_t-\alpha)}{\beta_t}\leqslant 0$ for all $t\geqslant 0$. We then classically get for $t\in\left[0,T\right]$ :
\[ \E\left(\phi_{t\wedge\tau_N}\left(Z^K_{t\wedge\tau_N}\right)\right) \leqslant e^{C_Tt}\E\left(\phi_0\left(X_0,Y_0\right)\right)\]
where $C_T=\beta_0'\|U^K\|_{\infty}+(\beta_T-\alpha)^3\|\na U^K\|_{\infty} + Ld(\beta_T-\alpha)$. The fact that $f_0^K$ has a compact support and Fatou's lemma then yield the result.
\end{proof}


\begin{lemma}\label{regu}
Fix $K,c>0$. Then there exists $b_K$ such that if $\beta_0\geqslant b_K$, the law $f^K_t$ of the process defined by the Equation~\eqref{eq1}, with an initial condition $f_0^K\in\mathcal{C}^{\infty}$ with compact support, is smooth, \lj{bounded along with its derivative,} and satisfies :
\[ \exists \alpha>0,\forall t\geqslant0, \exists C_t \text{ such that } f_t^K + |\na f_t^K| \leqslant C_te^{-\alpha |y^2|} \]
and $t\mapsto C_t$ is locally bounded.
\end{lemma}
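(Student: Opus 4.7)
The plan is to establish the three assertions of the lemma in turn, building on the Girsanov comparison already used in the proof of Lemma~\ref{changcondinit} and on standard hypoellipticity.

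First, I would address smoothness. Since $U^K$ is $\mathcal{C}^\infty$ on the compact torus $M_K$, all coefficients of \eqref{eq1} are smooth with bounded derivatives of every order. The diffusion coefficient acts only on $y$, but the commutator identity $[\partial_{y_i}, y\cdot\na_x] = \partial_{x_i}$ shows that H\"ormander's bracket condition is satisfied everywhere, so the transition kernel of the time-inhomogeneous generator $L_t$ admits a density which is jointly $\mathcal{C}^\infty$ in $(s,z,t,z')$ for $t>s$. Convolving against the $\mathcal{C}^\infty_c$ initial density $f_0^K$ yields that $(t,z)\mapsto f_t^K(z)$ is $\mathcal{C}^\infty$ on $\R_+\times M_K\times \R^d$.

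Second, for the pointwise Gaussian bound in $y$, I would mimic the final part of the proof of Lemma~\ref{changcondinit}. Let $(\bar X_t,\bar Y_t)$ solve the same system as \eqref{eq1} but with $\na U^K$ removed, driven by the same Brownian motion and the same smooth compactly supported initial law $f_0^K$. This is a Gaussian-plus-initial-condition process, whose density $\bar f_t^K$ is the convolution of $f_0^K$ with the centered Gaussian kernel associated to the explicit covariance matrix $Q_t$ computed in the proof of Lemma~\ref{changcondinit}. Since $f_0^K$ is compactly supported and the $y$-marginal variance of the Gaussian kernel is $\int_0^t 2\gamma_s\beta_s^{-1}\dd s$ (positive for $t>0$), one obtains directly a bound of the form $\bar f_t^K(x,y)+|\na \bar f_t^K(x,y)|\leqslant \bar C_t e^{-2\alpha |y|^2}$ for some $\alpha>0$ and some locally bounded $\bar C_t$. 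The change of probability $d\mathbb Q=\mathcal E_t\, d\mathbb P$ with
\[\mathcal E_t = \exp\left(N_t-\tfrac12\langle N\rangle_t\right),\qquad N_t=-\int_0^t \frac{\sqrt{\beta_s}}{\sqrt{2\gamma_s}}\na U^K(\bar X_s)\cdot dB_s,\]
turns $(\bar X,\bar Y)$ into a solution of \eqref{eq1}, as $\na U^K$ is bounded on $M_K$ so $\langle N\rangle_t\leqslant C t\beta_t$ is deterministic and bounded, and $\mathcal E_t$ has all polynomial moments uniformly on compact time intervals. Writing $f_t^K(z)=\E_{\mathbb P}[\mathcal E_t\mid \bar Z_t=z]\,\bar f_t^K(z)$ and applying the Cauchy--Schwarz inequality together with Lemma~\ref{espexp} (which, for $\beta_0$ large enough, controls $\E_{\mathbb P}[\mathcal E_t^2]$ uniformly on compact intervals) yields $f_t^K(z)\leqslant C_t e^{-\alpha |y|^2}$.

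Third, for the gradient $|\na f_t^K|$, I would differentiate the Fokker--Planck equation $\partial_t f_t^K=L_t^\ast f_t^K$ (with $L_t^\ast$ as in \eqref{eq:L*}) and observe that $\partial_{x_i}f_t^K$ and $\partial_{y_i}f_t^K$ solve linear parabolic equations with the same hypoelliptic principal part plus smooth bounded lower order terms. Applying the hypoelliptic parabolic regularity (Schauder-type) estimates on $M_K\times B(0,R)$ for an arbitrary ball and matching them against the Gaussian-weighted $L^2$ bound coming from the previous step—or, equivalently, differentiating the explicit Gaussian convolution representing $\bar f_t^K$ and combining with the Girsanov formula—produces the same Gaussian decay for $|\na f_t^K|$.

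The main technical obstacle is the last paragraph: transferring the pointwise Gaussian decay of $\bar f_t^K$ to $f_t^K$ through Girsanov requires some care since one needs conditional moments of $\mathcal E_t$ given $\bar Z_t=z$ that are locally bounded uniformly in $z$; this is why I prefer to state it in an $L^2$-against-the-Gaussian-density form, or alternatively to bypass it by running a direct weighted energy estimate on the Fokker--Planck equation with the exponential weight $e^{\alpha|y|^2}$ and then upgrading from $L^2$ to $L^\infty$ via hypoelliptic Sobolev embeddings. Either route is routine given the compactness of $M_K$ and the smoothness of $U^K$, and gives the required locally bounded constant $C_t$.
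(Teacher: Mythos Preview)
Your approach is genuinely different from the paper's, and the gap you yourself flag in the last paragraph is real and not resolved by what you wrote.

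The paper does \emph{not} use Girsanov for this lemma. Instead it proceeds as follows. First, boundedness of $f_t^K$ and of $\nabla f_t^K$ is obtained from a Feynman--Kac representation: writing the forward Kolmogorov equation as $\partial_t f_t^K = \tilde L_t f_t^K + d\gamma_t f_t^K$ for the generator $\tilde L_t$ of an auxiliary (time-reversed, unstable) diffusion $(\tilde X,\tilde Y)$, uniqueness gives $f_t^K(x,y)=e^{d\int_0^t\gamma_s\,ds}\,\E_{x,y}[f_0^K(\tilde X_t,\tilde Y_t)]$, and an analogous representation with a matrix weight handles $\nabla f_t^K$. This directly yields locally-in-time $L^\infty$ bounds on $f_t^K$ and $\nabla f_t^K$. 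Second, the Gaussian decay comes from an elementary pointwise argument that completely sidesteps any conditional-moment issue: if $f_t^K(z)$ were large at some $z=(x_1,y_1)$, then since $\|\nabla f_t^K\|_\infty$ is finite the density stays at least $f_t^K(z)/2$ on a ball of radius $\epsilon(z)=\min(1,f_t^K(z)/2\|\nabla f_t^K\|_\infty)$, and integrating $e^{H_K/2}f_t^K$ over that ball yields a lower bound of order $e^{|y_1|^2/4}f_t^K(z)\epsilon(z)^{2d}$ for $\E(e^{H_K(Z_t^K)/2})$. Since the latter is locally bounded by Lemma~\ref{espexp}, one solves for $f_t^K(z)$ and gets $f_t^K(z)\leqslant C_t e^{-\alpha|y|^2}$. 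The gradient bound is then obtained by an integration by parts (with the truncations $\eta_m$) giving $\int e^{H_K/4}|\nabla f_t^K|^2\leqslant C\,\E(e^{H_K(Z_t^K)/2})$, after which the same ball argument applies to $|\nabla f_t^K|$.

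Your Girsanov route stalls exactly where you say it does: from $f_t^K(z)=\E[\mathcal E_t\mid \bar Z_t=z]\,\bar f_t^K(z)$, the unconditional moment bound $\E[\mathcal E_t^p]\leqslant C_{t,p}$ gives nothing pointwise for large $|y|$; the conditional expectation could in principle grow, absorbing part of the Gaussian decay of $\bar f_t^K$. Also, Lemma~\ref{espexp} controls exponential moments of $H_K(Z_t^K)$, not moments of $\mathcal E_t$, so the appeal to it in your second step is misplaced (the control of $\E[\mathcal E_t^p]$ comes simply from $\langle N\rangle_t$ being bounded, not from Lemma~\ref{espexp}). Your fallback suggestions---an $L^2$-with-Gaussian-weight energy estimate followed by hypoelliptic Sobolev embedding---can be made to work, but calling them ``routine'' undersells the effort: you would need Gaussian-weighted $H^k$ bounds for $k>d$ with constants tracked in time, which is more or less the content of Sections~\ref{hypo1}--\ref{hypo2}. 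The paper's ball argument is both shorter and avoids the issue entirely, using Lemma~\ref{espexp} in the right place.
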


\begin{proof}
\pierre{First, by Ito's formula, $f_t^K(x,y)$ is a weak measure solution of the forward equation
\begin{equation}\label{eq:kolmo-forward}
\begin{array}{lll}
\partial_t f_t^K &=& - y \cdot  \na_x f_t^K + \na U^K(x)\cdot  \na_y f_t^K + \gamma_t \na_v\cdot \po v f_t^K\pf + \gamma_t \beta_t^{-1}\Delta_v f_t^K \\
& = & \tilde L_t f_t^K + d \gamma_t  f_t^K\,,
\end{array}    
\end{equation}
where $\tilde L_t$ is the generator of the process $(\tilde X,\tilde Y)$ solving
\[\dd \tilde X_t = - \tilde Y_t\dd t \,,\qquad \dd \tilde Y_t = \na U^K(\tilde X_t)\dd t + \gamma_t \tilde Y_t \dd t + \sqrt{2\gamma_t\beta_t^{-1}} \dd B_t\,.\]
Using again Ito's formula, we get that the function given by 
\[(x,y,t) \mapsto  e^{d\int_0^t\gamma_s \dd s }\mathbb E_{x,y} \po f_0^K \po \tilde X_t,\tilde Y_t\pf \pf\,,\]
also solves equation~\eqref{eq:kolmo-forward}. Uniqueness of the weak solution of \eqref{eq:kolmo-forward} is ensured by \cite[Theorem 9.8.7]{Krylov}, hence
\[f_t^K(x,y) = e^{d\int_0^t\gamma_s \dd s }\mathbb E_{x,y} \po f_0^K \po \tilde X_t,\tilde Y_t\pf \pf\,,\]
from which we immediately get that $f_t^K$ is bounded uniformly over $[0,T]$ for all $T>0$. Thanks to \cite[Theorem 1]{Locherbach}, $f_t^K$ is smooth for all $t\geqslant 0$. We can always differentiate  \eqref{eq:kolmo-forward} in a weak sense (i.e. once integrated with respect to a smooth compactly supported function of time and space and formally integrating by parts), from which we get that $\na f_t^K$ is a weak solution of
\[\partial_t \na f_t^K = \tilde L_t \na f_t^K + J_t \na f_t^K\,,\]
where $\tilde L_t$ acts component-wise on $\na f_t^K$ and
\[J_t(x,y) = \begin{pmatrix} d \gamma_t I_d  & \na^2 U^K(x) \\ - I_d & 2 d \gamma_t I_d\end{pmatrix} \,,\]
which is bounded uniformly over $[0,T]$ for all $T>0$. For a fixed $T>0$, considering $(W_t)_{t\geqslant 0}$ the matrix-valued process solution of $\dd W_t = W_t J_{T-t}( \tilde X_t,\tilde Y_t)$ with $W_0=I_d$ and using again the uniqueness of the weak solution of the PDE, we get the Feynman-Kac representation
\[\na f_t^K(x,y) = \mathbb E_{x,y} \po W_t \na f_0^K(\tilde X_t,\tilde Y_t) \pf \,,\]
which can be obtained by Itô's formula for the process $(\tilde X_t,\tilde Y_t,W_t)_{t\geqslant 0}$ applied to the test function $h(x,y,w,t) = w\na f_t^K(x,y)$. Hence, $\na f_t^K$ is uniformly bounded over $[0,T]$, and the same argument applies to all derivatives of $f_t^K.$

Now, let us prove the Gaussian bound of the statement, starting with $f_t^K$.
}
For $z\in\R^{2d}$, let $\epsilon(z)=\min(1,\frac{f^K_t(z)}{2\|\na f^K_t\|_{\infty}})$. If $z'\in B(z,\epsilon(z))$, $f_t^K(z')\geqslant f_t^K(z) - \|\na f_t^K\|_{\infty}\epsilon(z)\geqslant \frac{f_t^K(z)}{2}$. Then for any $z=(x_1,y_1)\in\R^{2d}$ :
\begin{align*}
\E(e^{H_K(Z^K_t)/2}) &=\int_{\R^d\times\R^d} e^{H_K(x,y)/2} f^K_t(x,y)\dd x\dd y \\ &\geqslant \int_{B\left(z,\epsilon(z)\right)} e^{H_K(x,y)/2} f^K_t(x,y)\dd x\dd y \\ &\geqslant e^{-\frac{\|U\|_{\infty}}{2}}e^{\frac{|y_1|^2-1}{4}}\frac{f_t^K(z)}{2}\omega_{2d}\epsilon(z)^d
\end{align*}
where $\omega_{2d}$ is the volume of the unit-sphere in dimension 2d. Then there are two possibilities. If $f^K_t(x,y)\geqslant 2\|\na f^K_t\|_{\infty}$, then 
\[ f_t^K(x,y) \leqslant \left(2e^{\frac{1}{4}}e^{\frac{\|U\|_{\infty}}{2}}\frac{\E(e^{H_K(Z^K_t)/2})}{\omega_{2d}}\right)e^{-\frac{y^2}{4}}. \]
Otherwise, $f^K_t(x,y)\leqslant 2\|\na f^K_t\|_{\infty}$ and 
\[ f^K_t(x,y) \leqslant  2\|\na f_t^K\|_{\infty}^{\frac{d}{d+1}}\left( e^{\frac{1}{4}}e^{\frac{\|U\|_{\infty}}{2}}\frac{\E(e^{H_K(Z^K_t)/2})}{\omega_{2d}}\right)^{\frac{1}{d+1}} e^{-\frac{y^2}{4(d+1)}}.\]
In any case, using the fact that $t\mapsto\E\left(e^{H_K\left(Z^K_t\right)/2}\right)$ is locally bounded for $\beta_0$ great enough, we have the result for $f_t^K$ with $\alpha=\frac{1}{4(d+1)}$.
Recall the definition of the approximation functions $\eta_m$ from Proposition~\ref{approx1}. We use them here with $U$ replaced by $U^K$ (or equivalently with $H$ replaced by $H_K$), but it doesn't change any of their properties since now $x$ is in a compact set. We now turn to $\na f_t^K$ by using the approximation functions $\eta_m$ from Section~\ref{EnergyBounds} and integrating by parts:
\begin{align*}
\int_{\R^d\times\R^d}& \eta_m e^{\frac{H_K}{4}} \left|\na f_t^K\right|^2\dd z = -\int_{\R^d\times\R^d}\na\left( \eta_m e^{\frac{H_K}{4}}\na f^K_t\right) f_t^K \dd z \\ &= -\int_{\R^d\times\R^d} \left( \na_y\eta_m\cdot \na f^K_te^{\frac{H_K}{4}} + \frac{\eta_m}{2}e^{\frac{H_K}{4}}\na H_K\cdot \na f^K_t + \eta_me^{\frac{H_K}{4}}\Delta f^K_t\right)f_t^K \dd z \\&\leqslant C\int_{\R^d\times\R^d}e^{\frac{H_K}{2}}f_t^K \dd z = C\E\left(e^{H_K\left(Z^K_t\right)/2}\right)
\end{align*}
for some constant $C>0$ independent from $m$, where we used uniform bounds on the two first derivatives of $f_t^K$. We can then conclude in the same way as for $f_t^K$.
\end{proof}

Setting $\phi_{\beta,\gamma}(h) = |(\na_x+\na_y)h|^2 + \sigma(\beta,\gamma)h^2$ with $\sigma(\beta,\gamma)=\frac{1}{2} + 2\sqrt{\gamma^{-1}\beta}(1+\|\na U^K\|_{\infty} + \gamma)^2$, we consider the modified $H^1$-norm (and its truncated version for $m\geqslant 1$):
\begin{align*}
    N(t,\beta,\gamma) &= \int_{M_K\times\R^d} \phi_{\beta,\gamma}\left(\frac{f_t^K}{\mu^K_{\beta}}\right)\dd\mu^K_{\beta}\\
    N_m(t,\beta,\gamma)& = \int_{M_K\times\R^d} \eta_m\phi_{\beta,\gamma}\left(\frac{f_t^K}{\mu^K_{\beta}}\right)\dd\mu^K_{\beta}  
\end{align*}
as well as 
\begin{align*}
   I(t,\beta) &= \int_{M_K\times\R^d} \left|\na \frac{f_t^K}{\mu^K_{\beta}}\right|^2 \dd\mu_{\beta}^K \\
   I_m(t,\beta) &= \int_{M_K\times\R^d} \eta_m \left|\na \frac{f_t^K}{\mu^K_{\beta}}\right|^2 \dd\mu_{\beta}^K
\end{align*}
 To study those functionals along the dynamic, write $\tilde{H}_m(t)=H_m(t,\beta_t,\gamma_t)$, $\tilde{H}(t)=H(t,\beta_t,\gamma_t)$, $\tilde{I}_m(t)=I_m(t,\beta_t)$ and $\tilde{I}(t)=I(t,\beta_t)$.

The main technical tool that we will need in order to study the evolution of those functionals will be, given $\phi:\mathcal{C}^{\infty}\mapsto\mathcal{C}^{\infty}$, quantities of the form :
\begin{equation}\label{eq:defGamma}
    \Gamma_{\phi,L}(h) = L\left(\phi(h)\right) - D_h\left(\phi\right)(h)L(h),
\end{equation}
where $D_h(\phi)$ is the pointwide differential of $\phi$, see \cite{Pierro2}.
The reason is that  for regular enough $h$, by writing $L_{\beta,\gamma}^K$ the generator~\eqref{gen} on $M_K$ with fixed $\beta$ and $\gamma$, we have 
\[ \partial_t\int_{M_K\times\R^d}\phi\left( e^{tL_{\beta,\gamma}^K}h \right) \dd \mu_{\beta}^K = -\int_{M_K\times\R^d} \Gamma_{\phi,L_{\beta,\gamma}^K}\left( e^{tL_{\beta,\gamma}^K}h \right)\dd \mu_{\beta}^K.\]
This kind of quantities has been studied in \cite{Pierro2}, where the author showed among other things :
\lj{\begin{lemma}\label{Gam}
Let $\phi(h) = |(\na_x+\na_y)h|^2 + \sigma h^2$ where $\sigma = \frac{1}{2} + \sqrt{\gamma\beta^{-1}}(1 + \|\na U\|_{\infty} +\gamma)^2$. Then, for all $\beta>0$, $\gamma>0$, $h\in\mathcal{C}^{\infty}$:
\[ \Gamma_{\phi,L_{\beta,\gamma}^{K,\lj{*}}}(h) \geqslant \frac{1}{2} |\na h|^2 + \Gamma((\na_x+\na_y)h),\] where $\Gamma((\na_x+\na_y)h)=\sum_{i=0}^d\Gamma((\partial_{x_i}+\partial_{y_i})h)=\sum_{i=0}^d\gamma_t\beta^{-1}_t|\na_y(\partial_{x_i}+\partial_{y_i})h|^2$.
\end{lemma}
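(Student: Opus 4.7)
The plan is to follow the standard Villani hypocoercive computation at the modified $H^1$ level: expand $\Gamma_{\phi,L_{\beta,\gamma}^{K,*}}(h)$ via the diffusion identity for $L_{\beta,\gamma}^{K,*}$, compute the explicit commutators of $L_{\beta,\gamma}^{K,*}$ with $A:=\na_x+\na_y$, and close the estimate by Young's inequality, using the $\sigma h^2$ piece of $\phi$ to absorb the indefinite cross-terms.

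First, starting from the definition~\eqref{eq:defGamma} of $\Gamma_{\phi,L}$ and using the Leibniz rule $L_{\beta,\gamma}^{K,*}(g^2)=2gL_{\beta,\gamma}^{K,*}g+2\gamma\beta^{-1}|\na_y g|^2$ applied to $g=h$ and componentwise to $g=A_i h$, I will obtain the identity
\[
\Gamma_{\phi,L_{\beta,\gamma}^{K,*}}(h)=2\,Ah\cdot[L_{\beta,\gamma}^{K,*},A]h+2\gamma\beta^{-1}|\na_y Ah|^2+2\sigma\gamma\beta^{-1}|\na_y h|^2.
\]
The middle term already provides $2\,\Gamma(Ah)=2\gamma\beta^{-1}|\na_y Ah|^2$, which dominates the $\Gamma(Ah)$ requested in the conclusion (with a factor $2$ to spare), so the task reduces to showing
\[
2\,Ah\cdot[L_{\beta,\gamma}^{K,*},A]h+\gamma\beta^{-1}|\na_y Ah|^2+2\sigma\gamma\beta^{-1}|\na_y h|^2 \;\geq\; \tfrac12|\na h|^2.
\]

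Second, from the explicit form $L_{\beta,\gamma}^{K,*}=-y\cdot\na_x+(\na U^K-\gamma y)\cdot\na_y+\gamma\beta^{-1}\Delta_y$, a direct calculation yields
\[
[L_{\beta,\gamma}^{K,*},\partial_{x_i}]h=-\na\partial_{x_i}U^K\cdot\na_y h, \qquad [L_{\beta,\gamma}^{K,*},\partial_{y_i}]h=\partial_{x_i}h+\gamma\,\partial_{y_i}h,
\]
so that, summing against $A_i h=\partial_{x_i}h+\partial_{y_i}h$,
\[
Ah\cdot[L_{\beta,\gamma}^{K,*},A]h=|\na_x h|^2+(1+\gamma)\,\na_x h\cdot\na_y h+\gamma|\na_y h|^2-Ah\cdot\na^2 U^K\,\na_y h.
\]
The $|\na_x h|^2$ and $\gamma|\na_y h|^2$ contributions are positive; the cross-term and the Hessian term are indefinite, but each is bilinear with at most one factor in $\na_x h$.

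Third, I will apply Young's inequality to each indefinite term, selecting the Young parameter so that the accumulated $|\na_x h|^2$ cost uses up at most $\tfrac{3}{2}$ of the $2|\na_x h|^2$ budget coming from the commutator, thereby leaving the required $\tfrac12|\na_x h|^2$ for the right-hand side. The resulting $|\na_y h|^2$ penalty, which is of order $(1+\gamma+\|\na^2 U^K\|_\infty)^2$, is then absorbed by the coercive term $2\sigma\gamma\beta^{-1}|\na_y h|^2$ together with the $2\gamma|\na_y h|^2$ from the commutator, the leftover providing the missing $\tfrac12|\na_y h|^2$. The specific constant in the announced $\sigma$ is precisely calibrated so that $2\sigma\gamma\beta^{-1}$ dominates the required $|\na_y h|^2$ absorption threshold. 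The only real obstacle is the arithmetic bookkeeping of Young parameters to match exactly the stated $\sigma$ and the factor $\tfrac12$; the underlying inequality is then a direct consequence of the commutator identities above.
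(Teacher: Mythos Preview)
Your proposal is correct and follows essentially the same route as the paper. The paper's proof is little more than a citation to \cite[Example 3]{Pierro2} together with the key decomposition $\Gamma_{\phi}=\sigma\Gamma_{h^2}+\Gamma_{|A\cdot|^2}$ and the identity $\Gamma_{|A\cdot|^2}(h)=2\Gamma(Ah)+2\,Ah\cdot[L^{K,*}_{\beta,\gamma},A]h$, after which it simply says ``an elementary computation concludes''; your proposal is precisely that elementary computation spelled out, with the same commutator evaluation and the same Young-inequality closure using the $\sigma\,\Gamma$ term to absorb the indefinite $|\nabla_y h|^2$ contributions.
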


\begin{proof}
This is \cite[Example 3]{Pierro2}. Let us simply recall the key idea (to alleviate notations we omit the generator in the subscript of $\Gamma$). First, $\Gamma$ is linear in $\phi$, hence for $\phi$ as defined above:
\[\Gamma_{\phi} = \sigma\Gamma + \Gamma_{|(\na_x+\na_y)\cdot|^2},\]where $\Gamma$ is the classical "carr\'e du champs" $\Gamma(h)=\gamma_t\beta^{-1}_t|\na_y h|^2$. We then have the following equality:
\[\Gamma_{|(\na_x+\na_y)\cdot|^2}(h) = \Gamma((\na_x+\na_y)h) + (\na_x+\na_y)h\cdot[L,\na_x+\na_y]h,\]where the brackets denotes the commutators of two operators. An elementary computation concludes.
\end{proof}}

\begin{proof}[Proof of Proposition~\ref{evol0}]
First, $\beta_0$ must be great enough so that we can apply all previous lemmas of Section~\ref{hypo1}. Let's first justify why $\tilde{N}_m$, $\tilde{N}$, $\tilde{I}_m$ and $\tilde{I}$ are finite. They are all finite at time $0$ because $f_0$ has compact support. For $m\in\N$, $\tilde{N}_m$ and $\tilde{I}_m$ are finite for all times as integrals of continuous functions with compact support. 

 Write $\nabla^*_t= -\nabla+\beta_t\na H_K$ the dual of the gradient operator $\nabla$ in $L^2\left(\mu_{\beta_t}^K\right)$.
 Integrating by parts,
\begin{align*}
\tilde{I}_m &= \int_{M_K\times\R^d} \eta_m \left|\na \frac{f_t^K}{\mu^K_{\beta_t}}\right|^2 \dd\mu^K_{\beta_t}\\ &= \int_{M_K\times\R^d} \na_t^*\left(\na \eta_m \na \frac{f_t^K}{\mu^K_{\beta_t}} \right) f_t^K \dd z\\& \leqslant \int_{M_K\times\R^d} C_te^{(\beta_t-\alpha)H_K(z)} f_t^K(z)\dd z = C_t\E\left(e^{(\beta_t-\alpha)H_K(Z^K_t)}\right)
\end{align*} 
for some $\alpha >0$ and $C_t>0$ locally bounded and independent from $m$, using Lemma~\ref{regu} and that the derivatives of $f_t^K$ are bounded. Lemma~\ref{espexp} and the monotonous convergence of $\tilde{I}_m$ towards $\tilde{I}$ then implies that $\tilde{I}$ is locally bounded.
We conclude that $\tilde{N}$ is also locally bounded thanks to the Poincar{\'e} inequality of Proposition~\ref{poin}.
As in \eqref{eq:L*}, we have that the dual in $L^2(\mu_\beta^K)$  of the generator $L_t$ is 
\[L^{\ast}_{t} = -y\cdot\nabla_x + (-\gamma_t y + \nabla_xU^K)\cdot \nabla_y + \gamma_t\beta_t^{-1}\Delta_y\]and that $f_t^K$ solves $\partial_tf_t^K=L_t^{\ast}(f_t^Ke^{\beta_t H_K})e^{-\beta_t H_K}$.
With the regularity result from Lemma~\ref{regu}, and the compactly supported approximation functions $\eta_m$, one can differentiate $N_m$ and $I_m$  to get for all $m$:
\begin{align*}
(\partial_tN_m)(t,\beta_t,\gamma_t) &= \int_{M_K\times\R^d} \eta_mD_h\phi_{\beta_t,\gamma_t}\left(h_t^K\right)\frac{\partial_tf^K_t}{\mu^K_{\beta_t}}\dd \mu^K_{\beta_t} \\ &= \int_{M_K\times\R^d} \eta_mD_h\phi_{\beta_t,\gamma_t}\left(h_t^K\right)L^{\ast}_t\left(h_t^K\right)\dd \mu_{\beta_t}^K - \int_{M_K\times\R^d} L_t^{\ast}\left(\eta_m\phi_{\beta_t,\gamma_t}\left(h_t^K\right)\right)\dd\mu^K_{\beta_t}  \\ &= -\int_{M_K\times\R^d} \eta_m\Gamma_{\phi_{\beta_t,\gamma_t}}\left(h_t^K\right)\dd\mu_{\beta_t}^K - \int_{M_K\times\R^d} L_t^{\ast}(\eta_m)\phi_{\beta_t,\gamma_t}\left(h_t^K\right)\dd\mu_{\beta_t}^K \\  &\ \ \ \ \ - 2\int_{M_K\times\R^d} \Gamma_t\left(\eta_m,\phi_{\beta_t,\gamma_t}\left(h_t^K\right)\right)\dd\mu_{\beta_t}^K.
\end{align*}
From Lemma~\ref{Gam}, we get $\Gamma_{\phi_{\beta,\gamma}}(h) \geqslant \frac{1}{2}|\na h|^2 + \Gamma_t((\na_x+\na_y)h)$, and since 
\begin{multline*}
\int_{M_K\times\R^d} \Gamma_t\left(\eta_m,\phi_{\beta_t,\gamma_t}\left(h_t^K\right)\right)\dd\mu_{\beta_t}^K \\= -\frac{1}{2}\int_{M_K\times\R^d} \left(\eta_mL^{\ast}_t\phi_{\beta_t,\gamma_t}\left(h_t^K\right) + L_t^{\ast}\eta_m\phi_{\beta_t,\gamma_t}\left(h_t^K\right) \right)\dd\mu_{\beta_t}^K \\= -\frac{1}{2} \int_{M_K\times\R^d} (L_t+L_t^{\ast})\left(\eta_m\right)\phi_{\beta_t,\gamma_t}\left(h_t^K\right)\dd\mu_{\beta_t}^K
\end{multline*}
and $L_t\eta_m\leqslant \frac{C\beta_t}{m}$, we get :
\begin{align*}
(\partial_tN_m)(t,\beta_t,\gamma_t) &\leqslant -\frac{1}{2} \int_{M_K\times\R^d} \eta_m \left|\na h_t^K\right|^2 \dd\mu_{\beta_t}^K - \int_{M_K\times\R^d} \Gamma_t\left((\na_x+\na_y)h_t^K\right)\dd\mu_{\beta_t}^K \\ & \ \ \ \ \ + \int_{M_K\times\R^d} L_t(\eta_m)\phi_{\beta_t,\gamma_t}\left(h_t^K\right) \dd\mu_{\beta_t}^K  \\&\leqslant -\frac{1}{2} \int_{M_K\times\R^d} \eta_m \left|\na h_t^K\right|^2 \dd\mu_{\beta_t}^K - \int_{M_K\times\R^d} \Gamma_t\left((\na_x+\na_y)h_t^K\right)\dd\mu_{\beta_t}^K \\& \ \ \ \ \ + \frac{C\lj{\beta_t}}{m}\int_{M_K\times\R^d} \phi_{\beta_t,\gamma_t}\left(h_t^K\right) \dd\mu_{\beta_t}^K.
\end{align*}

Now we look at the derivative in $\beta$, knowing that $\partial_{\beta}Z \leqslant 0$ by writting : 
\[N_m(t,\beta,\gamma) = \int_{M_K\times\R^d} \left(\eta_m\left|(\na_x+\na_x)\ln\left(\frac{f_t^K}{\mu^K_{\beta}}\right)\right|^2\frac{f_t^K}{\mu^K_{\beta}}+ \eta_m\sigma(\beta,\gamma)\frac{f_t^K}{\mu^K_{\beta}}\right)\dd f_t^K.\]
This gives
\begin{multline*}
(\partial_{\beta}N_m)(t,\beta_t,\gamma_t) \leqslant \partial_{\beta}\sigma(\beta_t,\gamma_t)\int_{M_K\times\R^d} \eta_m\left(h_t^K\right)^2\dd\mu_{\beta_t}^K + \sigma(\beta_t)\int_{M_K\times\R^d} \eta_m (U^K(x) + \frac{y^2}{2})\left(h_t^K\right)^2\dd\mu_{\beta_t}^K \\+ \int_{M_K\times\R^d} \eta_m(U^K(x) + \frac{y^2}{2})\left|(\na_x + \na_y)h_t^K\right|^2\dd\mu_{\beta_t}^K\\ + \int_{M_K\times\R^d} 2\eta_m(\na_x+\na_y)\ln\left(h_t^K\right)\cdot (\na_x+\na_y)\partial_\beta(-\ln(\mu^K_{\beta_t})) h_t^K\dd f_t^K.
\end{multline*}
We now have to treat all those terms. We will use that $N_m\leqslant N$. First, since $U^K$ is bounded,
\[\partial_{\beta}\sigma(\beta_t,\gamma_t)\int_{M_K\times\R^d} \eta_m\left(h_t^K\right)^2\dd\mu_{\beta_t}^K + \sigma(\beta_t)\int_{M_K\times\R^d} \eta_m U^K(x)\left(h_t^K\right)^2\dd\mu_{\beta_t}^K \leqslant C(1+\beta_t^{2})\tilde{N}\]
for some $C>0$ (in the rest of the proof we denote by $C$ several constants which do not depend on $t$ nor $m$). 
From Proposition~\ref{Vila}, applied with $g=h_t^K$, we get : 
\[\sigma(\beta_t,\gamma_t)\int_{M_K\times\R^d}\eta_m\frac{y^2}{2}\left(h_t^K\right)^2\dd\mu_{\beta_t}^K \leqslant C(1+\beta_t^{2})\tilde{N}.\]
Again, because $U^K$ is bounded : \[\int_{M_K\times\R^d} U(x)\left|(\na_x+\na_y)h_t^K\right|^2\dd\mu_{\beta}^K \leqslant C\tilde{N}.\]
Using again Proposition \ref{Vila} we get : 
\[\int_{M_K\times\R^d} \eta_m \frac{y^2}{2}\left|(\na_x+\na_y)h_t^K\right|^2\dd \mu^K_{\beta_t} \leqslant C\left(\tilde{N} + \sqrt{\gamma_t^{-1}\beta_t}\int_{M_K\times \R^d}\Gamma_t\left((\na_x+\na_y)h_t^K\right)\dd\mu_{\beta_t}^K\right).\]
Finally, using that $a.b\leqslant \frac{a^2}{2} + \frac{b^2}{2}$ and $-\partial_{\beta}\ln(\mu_{\beta}^K) = U^K(x) + \frac{y^2}{2} + \frac{\partial_{\beta}Z_{\beta}}{Z_{\beta}}$, we get :
\begin{align*}
\int_{M_K\times\R^d}& \eta_m(\na_x+\na_y)\ln\left(\frac{f_t^K}{\mu^K_{\beta}}\right)\cdot (\na_x+\na_y)\partial_\beta(-\ln(\mu^K_{\beta_t})) \frac{f_t^K}{\mu^K_{\beta_t}}\dd f_t^K \\ &\leqslant C\int_{M_K\times\R^d} \eta_m\left|(\na_x+\na_y)h_t^K\right|^2\left(\frac{f_t^K}{\mu^K_{\beta_t}}\right)^2\dd\mu_{\beta_t}^K \\&\ \ \ \ \ + C\int_{M_K\times\R^d} \eta_m|(\na_x+\na_y)(U(x)+\frac{y^2}{2})|^2\left(h_t^K\right)^2\dd\mu_{\beta_t}^K \\
&\leqslant C\po N_m + \|\na U\|^2_{\infty}\int\eta_m\left(h_t^K\right)^2\dd\mu_{\beta_t}^K + \int y^2 \left(h_t^K\right)^2\dd\mu_{\beta_t}^K\pf 
\end{align*}
and we conclude as the second term. Finally, since $\gamma$ only appears in the definition of $\sigma$, we have :
\[ (\partial_{\gamma}N_m)(t,\beta_t,\gamma_t) =\partial_\gamma\sigma\int_{M_K\times\R^d}\eta_m \left(h_t^K\right)^2\dd \mu^K_{\beta_t} \leqslant C\gamma_t\tilde{N}\]
From the previous computation, and from the fact that $\gamma'_t\leqslant L\beta'_t$, we get :
\begin{align*}
\tilde{N}_m'(t) &= (\partial_tN_m)(t,\beta_t,\gamma_t) + \beta'_t(\partial_{\beta}N_m)(t,\beta_t,\gamma_t) + \gamma'_t(\partial_{\gamma}N_m)(t,\beta_t,\gamma_t) \\ &\leqslant -\frac{1}{2}\tilde{I}_m(t) - \int_{M_K\times\R^d}\eta_m \Gamma_t\left(Ah_t^K\right)\dd\mu_{\beta_t}^K\\ &\qquad +C\beta'_t\beta_t \int_{M_K\times\R^d} \Gamma_t\left(Ah_t^K\right)\dd\mu_{\beta_t}^K + \frac{C\beta_t}{m}\tilde{N}_t + C\beta'_t(1+\beta_t^{2})\tilde{N}_t.
\end{align*}
By integration, we get for all $0\leqslant s\leqslant t$:
\begin{align*}
\tilde{N}_m(t)-\tilde{N}_m(s) &\leqslant \int_s^t \Bigg( -\frac{1}{2}\tilde{I}_m(u)+ \frac{C\beta_u}{m}\tilde{N}_u + C\beta'_u(1+\beta_u^{2})\tilde{N}_u \\& \qquad - \int_{M_K\times\R^d}\eta_m \Gamma_t\left(Ah_t^K\right)\dd\mu_{\beta_u}^K +C\beta'_u\beta_u \int_{M_K\times\R^d} \Gamma_t\left(Ah_t^K\right)\dd\mu_{\beta_u}^K\Bigg)\dd u  .
\end{align*}
We consider $\beta_0$ great enough so that $1-C\beta'_t\beta_t\geqslant0$ for all $t$. Using monotonous convergence, the fact that $\tilde{I}\geqslant \tilde{I}_m$, and Fatou's lemma we get :
\[ \tilde{N}(t)-\tilde{N}(s) \leqslant \int_s^t \left( -\frac{1}{2}\tilde{I}(u) + C\beta'_u(1+\beta_u^{2})\tilde{N}_u\right)\dd u. \]
Now, from the Poincar{\'e} inequality  of Proposition~\ref{poin} and the definition of $\sigma$,  
\[\tilde{\lambda}(\beta)\tilde{N}(t)\leqslant\tilde{I}(t)\] for some $\tilde \lambda$ satisfying $\frac{1}{\beta}\ln(\tilde \lambda)\rightarrow-c^*$, so that we can find a $\lambda_0$ such that \[\tilde \lambda(\beta_t)\geqslant \lambda_0e^{-\beta_t\frac{(c+c^{\ast})}{2}} = \frac{\lambda_0}{(e^{c\beta_0}+t)^{1-\alpha}},\] where $\alpha=(c-c^{\ast})/(2c)$.
Taking $\beta_0$ large enough so that $ C\beta'_t(1+\beta_t^{2})\leqslant \lambda_0/4(e^{c\beta_0}+t)^{-1+\alpha}$, we have finally obtained that, if $\beta_0\geqslant \tilde{b}_K$ for some $\tilde b_K$, then for all $ 0\leqslant s\leqslant t$ 
\[ \tilde{N}(t)-\tilde{N}(s) \leqslant \int_s^t \left(-\frac{\lambda_0}{4(e^{c\beta_0}+u)^{1-\alpha}}\tilde{N}(u)\right)\dd u\] and this concludes (as explained at the beginning of this section).
\end{proof}

\section{ \texorpdfstring{$H^k$}{Hk}-hypocoercivity}\label{hypo2}

In all this section, whose goal is to prove Proposition~\ref{evol1}, we use the definitions and notations of Section~\ref{fullprocess}. In particular, $f_t^K$ stands for the law of the process defined by Equation~\eqref{eq2}, $\mu^K_{\beta}= e^{-\beta H_K(z)}\dd z/\mathcal{Z}_{\beta}^K$ where $\mathcal{Z}_{\beta}^K$ makes $\mu^K_{\beta}$ a probability density on $M_K$, and  $h_t^K = f_t^K/\mu^K_{\beta_t}$. Similarly to the previous section, according to \cite{Taniguchi}, $f_t^K$ is a smooth function and solves \[\partial_tf_t^K=L^{K,*}_t\left(\frac{f_t^K}{\mu_{\beta_t}^K}\right)\mu_{\beta_t}^K,\] but here the dual in $L^2(\mu_{\beta_t})$ of the generator of the process is
\begin{align*}
	L^{K,*}_t = -\beta_t^{-1}\sigma(y)\na_x^*\na_x - \gamma_t\beta_t^{-1}\na_y^*\na_y + \beta_t^{-1}\left( \na_x^*\na_y - \na_y^*\na_x \right),
\end{align*} 
\lj{where the dual are taken in $L^2(\mu^K_{\beta})$:
\begin{align*}
&\na_x^* = -\na_x\cdot  + \beta \na U^K \cdot \\ &\na_y^* = -\na_y\cdot + \beta \na W\cdot.
\end{align*}}
Indeed, the additional diffusive part in $x$ has been designed to be reversible. In order to prove Proposition~\ref{evol1}, we introduce for $m\in\N$ the classical $H^m$-Sobolev norms on $(M_K^2,\mu_\beta^K)$ given by
\[\|h\|_{H^m(\mu_{\beta}^K)}=\sum_{\alpha\in\mathbb{N}^{2d},|\alpha|\leqslant m}\int_{M_K^2}|\partial^{\alpha}h|^2 \dd \mu_\beta^K.\]
The general strategy is similar to the $L^2$ case of Section~\ref{hypo1}, namely we will prove that $\|h^K_t-1\|_{H^m}$ goes to zero for all $m$, and conclude by a Sobolev embedding for $m$ large enough. The constant in the Sobolev embedding depends on the time $t$ which should be compensated by the fact  $\|h^K_t-1\|_{H^m}$   goes to zero fast enough.  As in $L^2$ case, we need to introduce some modified Sobolev norm to deal with the lack of dissipativity in the $x$ variable in some part of the space. Following \cite{Hyp}, for $m\in\mathbb{N}$, we consider a modified $H^m$-Sobolev norm of the form 
\begin{multline}\label{defN}
N_m(t,\beta) =\\ \int_{M_K\times M_K} \left( \frac{f_t^K}{\mu_{\beta}^K} -1 \right)^2 + \sum_{k=1}^m \left(\sum_{i=0}^k \omega_{i,k}(\beta)\left|\na_x^i\na_y^{k-i}\frac{f_t^K}{\mu_{\beta}^K}\right|^2 + \omega_k(\beta)\na_x^{k-1}\na_y\frac{f_t^K}{\mu_{\beta}^K}\cdot \na_x^m \frac{f_t^K}{\mu_{\beta}^K}\right) \dd\mu^K_{\beta}
\end{multline} 
for some weights $\omega$ to be fixed later on. 
\lj{Recall the definition
\[ \mathcal{M} = \left\{ y\in M_K;\na^2 W(y) = I_d \right\}. \]
If $y\in\mathcal M$, then we obtain $\na_x$ in the derivative of 
\[
\int_{M_K\times M_K} \na_x^{k-1}\na_y\frac{f_t^K}{\mu_{\beta}^K}\cdot \na_x^m \frac{f_t^K}{\mu_{\beta}^K} \dd \mu^K_\beta
\]
using the commutator $[\na_y,\na W\cdot \na_x]$, and for $y\notin\mathcal M$, it comes from the $\sigma(y)\na_x^*\na_x$ part of $L^{K,*}_t$.}
Here we used the notations
\begin{align*}
|\na_x^i\na_y^{k-i}h|^2&=\sum_{|\alpha_1|= i;|\alpha_2|= k-i} |\partial_x^{\alpha_1}\partial_y^{\alpha_2} h|^2\\
\na_x^{k-1}\na_yh\cdot \na_x^mh&=\sum_{|\alpha|=k-1}\sum_{j=1}^{d}\partial_x^{\alpha}\partial_{y_j}h\partial_x^{\alpha}\partial_{x_j}h.
\end{align*}
We also define $\tilde{N}_m(t)=N_m(t,\beta_t)$. Since the process is now in a compact set, we do not need the approximation functions $\eta_m$, and the subscript here indicates the order of the Sobolev norm in contrast to the previous section. Besides, in Section~\ref{hypo1}, we had to keep track of the dependency of the constant in $\beta_0$, as some uniformity in time was necessary in Proposition~\ref{boundedness} for the renewal argument of the proof of Proposition~\ref{prop:born{\'e}}. This is no longer the case here. 

In order to study the evolution of $N_m$, we need first to have some commutation results and control over the derivative of the $L^2$-norm of $\na_x^{m_1}\na_y^{m_2}h$ as in Lemma~\ref{Gam}. Here, $\na_x^{m_1}\na_y^{m_2}h$ denotes the vector of all derivative of $h$ of order $m_1$ on $x$ and $m_2$ on $y$:
\[\na_x^{m_1}\na_y^{m_2}h=\left\{\partial^{\alpha_1}_x\partial^{\alpha_2}_yh\;\middle|\; |\alpha_1|=m_1,\, |\alpha_2|=m_2\right\}\]
The method used here is adapted from \cite{Hyp} to take into account the time-inhomogeneity and the new dynamic. Recall the notation \eqref{eq:defGamma} for generalized $\Gamma$ operators.

\begin{lemma}\label{commu}
Let $m_1,m_2\in\mathbb{N}$, then there exists some constant $\theta>0$ such that for all $h\in\mathcal{C}^{\infty}\left(M_K^2\right)$:
\begin{multline*}
\Gamma_{t,|\na_x^{m_1}\na_y^{m_2}\cdot|^2}(h) \geqslant \beta_t^{-1}\left( \gamma_t|\na_x^{m_1}\na_y^{m_2+1}h|^2 +\sigma |\na_x^{m_1+1}\na_y^{m_2}h|^2\right) \\  -\theta(1+\gamma_t) \sum_{\underset{k+l\leqslant m_1+m_2}{k=1,l=1}}^{m_1+1,m_2+1}|\na_x^{k}\na_y^{l}h|^2 - \theta\mathbbm{1}_{m_2\geqslant 1}\sum_{l=1}^{m_2}|\na^l\sigma|^2|\na_x^{m_1+2}\na_y^{m_2-l}h|^2.
\end{multline*}
\end{lemma}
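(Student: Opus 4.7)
My approach is to expand $\Gamma_{t,\phi}(h)$ directly from its definition~\eqref{eq:defGamma} applied to $\phi(h)=|\na_x^{m_1}\na_y^{m_2}h|^2=\sum_\alpha(\partial^\alpha h)^2$, where the sum runs over multi-indices $\alpha=(\alpha_1,\alpha_2)$ with $|\alpha_1|=m_1,\,|\alpha_2|=m_2$ and $\partial^\alpha=\partial_x^{\alpha_1}\partial_y^{\alpha_2}$. The diffusion identity $L_t^{K,*}(f^2)=2fL_t^{K,*}f+2\Gamma_L(f)$ applied component-wise, with $\Gamma_L(f)=\beta_t^{-1}\sigma(y)|\na_xf|^2+\gamma_t\beta_t^{-1}|\na_yf|^2$ the classical carré du champ of the diffusive part of $L_t^{K,*}$ (the antisymmetric first-order transport $\na U^K\cdot\na_y-\na W\cdot\na_x$ being a derivation contributes nothing), yields
\[\Gamma_{t,\phi}(h)=2\sum_\alpha\Gamma_L(\partial^\alpha h)+2\sum_\alpha(\partial^\alpha h)\,[L_t^{K,*},\partial^\alpha]h.\]
Summing $\Gamma_L(\partial^\alpha h)$ over $\alpha$ already delivers the coercive terms of the lemma with a factor $2$ of slack:
\[2\sum_\alpha\Gamma_L(\partial^\alpha h)\geqslant\beta_t^{-1}\sigma|\na_x^{m_1+1}\na_y^{m_2}h|^2+\gamma_t\beta_t^{-1}|\na_x^{m_1}\na_y^{m_2+1}h|^2.\]

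It remains to lower bound the commutator sum. I decompose $L_t^{K,*}$ into its five pieces: $\beta_t^{-1}\sigma(y)\Delta_x$, $\gamma_t\beta_t^{-1}\Delta_y$, the drifts $-\sigma\na U^K\cdot\na_x$ and $-\gamma_t\na W\cdot\na_y$, and the antisymmetric transport $\na U^K\cdot\na_y-\na W\cdot\na_x$. The $y$-Laplacian has constant coefficient $\gamma_t\beta_t^{-1}$ and commutes with $\partial^\alpha$. For the $x$-Laplacian, which is the only second-order piece producing a nontrivial commutator, since $\sigma=\sigma(y)$ and $\Delta_x$ commutes with every $\partial_x$ and $\partial_y$, only $\partial_y^{\alpha_2}$ acting on $\sigma$ contributes, and Leibniz yields
\[[\beta_t^{-1}\sigma\Delta_x,\partial^\alpha]h=-\beta_t^{-1}\sum_{0<\beta\leqslant\alpha_2}\binom{\alpha_2}{\beta}(\partial_y^\beta\sigma)\,\partial_x^{\alpha_1}\partial_y^{\alpha_2-\beta}\Delta_x h.\]
Pairing with $\partial^\alpha h$ and applying Young's inequality $|ab|\leqslant\tfrac12a^2+\tfrac12b^2$ produces, with $l=|\beta|$, exactly the term $\theta\mathbbm 1_{m_2\geqslant 1}\sum_{l=1}^{m_2}|\na^l\sigma|^2|\na_x^{m_1+2}\na_y^{m_2-l}h|^2$ of the statement, plus absorbable multiples of $|\partial^\alpha h|^2$ (the indicator reflecting that the Leibniz sum is empty when $m_2=0$). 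For each of the three first-order pieces $A=f\cdot\na_\bullet$, the Leibniz expansion of $[A,\partial^\alpha]$ cancels the leading derivative $\partial^{\alpha+1}h$, so every remaining summand carries at least one derivative of the bounded coefficient $f$ (one of $\sigma$, $\na U^K$ or $\na W$, all smooth on compact tori by construction and Lemma~\ref{hypo:compact1}) times a mixed derivative $\partial_x^k\partial_y^l h$ of total order at most $m_1+m_2$. Young's inequality then delivers contributions fitting the first remainder sum of the statement, the prefactor $(1+\gamma_t)$ coming precisely from the $\gamma_t$ in $-\gamma_t\na W\cdot\na_y$.

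The argument is elementary but its main difficulty is bookkeeping. For each of the four non-commuting pieces of $L_t^{K,*}$ one must (a) carry out the Leibniz expansion of its commutator with $\partial^\alpha$, (b) identify the exact multi-index of derivatives landing on $h$ in each summand, (c) bound the coefficient-derivative factor by a constant using smoothness of $U^K,W$ and the controlled behaviour of $\sigma$ from Lemma~\ref{hypo:compact1}, and (d) verify via Young's inequality that the resulting contribution fits the announced index range—except for the $x$-Laplacian contribution, whose special shape $(m_1+2,\,m_2-l)$ with $l\geqslant 1$ explains the separate sum weighted by $|\na^l\sigma|^2$ in the statement. No single estimate is delicate; the factors $\beta_t^{-1},\beta_t^{-2}$ arising at intermediate steps are uniformly bounded (under the standing assumption $\beta_t\geqslant 1$) and are absorbed into the constant $\theta$.
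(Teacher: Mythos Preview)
Your proof is correct and follows essentially the same route as the paper's: both start from the identity $\Gamma_{t,|\na_x^{m_1}\na_y^{m_2}\cdot|^2}(h)=\Gamma_t(\na_x^{m_1}\na_y^{m_2}h)+\na_x^{m_1}\na_y^{m_2}h\cdot[L_t^{K,*},\na_x^{m_1}\na_y^{m_2}]h$ (up to a factor of $2$), decompose $L_t^{K,*}$ into its diffusive and drift pieces, expand the commutators via Leibniz, and close with Cauchy--Schwarz/Young. The paper groups all first-order terms into a single drift $v\cdot\na$ whereas you split them into three, but the bookkeeping and the resulting estimates are the same.
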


\begin{proof}
As in \cite[Lemma 10]{Pierro}, we have for smooth $h$ :
\[ \Gamma_{t,|\na_x^{m_1}\na_y^{m_2}\cdot|^2}(h) = \Gamma_t(\na_x^{m_1}\na_y^{m_2}h) + \na_x^{m_1}\na_y^{m_2}h\cdot \left[L_t^{K,*},\na_x^{m_1}\na_y^{m_2}\right]h.\]
Then first :
\[ \Gamma_t(\na_x^{m_1}\na_y^{m_2}h) = \beta^{-1}_t\left( \gamma_t|\na_x^{m_1}\na_y^{m_2+1}h|^2 +\sigma |\na_x^{m_1+1}\na_y^{m_2}h|^2\right).\]
Then, we can write $L^{K,*}_t = v\cdot \na + \beta_t^{-1}\sigma\Delta_x + \beta_t^{-1}\gamma_t \Delta_y$ with the drift 
\[v(x,y)=\begin{pmatrix} -\na W(y) - \sigma(y)\na U^K(x) \\ \na U^K(x) - \gamma_t\na W(y) \end{pmatrix}.\]
First,
\[  \left[\beta_t^{-1}\gamma_t \Delta_y,\na_x^{m_1}\na_y^{m_2}\right] = 0.\]
Then, using Cauchy-Schwarz inequality,
\begin{align*}
\na_x^{m_1}\na_y^{m_2}h\cdot &\left[\beta_t^{-1}\sigma \Delta_x,\na_x^{m_1}\na_y^{m_2}\right]h = \na_x^{m_1}\na_y^{m_2}h\cdot \beta_t^{-1}\sum_{l=1}^{m_2}\binom{m_2}{l}\na_y^l\sigma\cdot \na_y^{m_2-l}\na_x^{m_1} \Delta_x h \\ &\geqslant - \beta^{-1}_t\left(|\na_x^{m_1}\na_y^{m_2}h|^2 + \theta \sum_{l=1}^{m_2}|\na^l\sigma|^2|\na_x^{m_1+2}\na_y^{m_2-l}h|^2\right).
\end{align*}
Notice that if $m_2=0$, then $\na_x^{m}h\cdot \left[\beta_t^{-1}\sigma \Delta_x,\na_x^{m}\right]h =0$. The next term is:
\begin{align*}
\left[\left(-\na W-\sigma\na U^K\right)\na_x,\na_x^{m_1}\na_y^{m_2}\right]h &= \sum_{l=1}^{m_2}\binom{m_2}{l} \na_y^{l+1} W \na_x^{m_1+1}\na_y^{m_2-l}h \\&+ \sum_{\underset{\underset{(k,l)\neq (0,0)}{l\leqslant m_2}}{k\leqslant m_1}} \binom{m_1}{k}\binom{m_2}{l}\na_y^l\sigma\na_x^{k+1}U\na_x^{m_1-k+1}\na_y^{m_2-l}h 
\end{align*}
and 
\begin{align*}
\left[\left(-\gamma_t\na W+\na U^K\right)\na_y,\na_x^{m_1}\na_y^{m_2}\right]h &= \gamma_t\sum_{l=1}^{m_2}\binom{m_2}{l} \na_y^{l+1} W \na_x^{m_1}\na_y^{m_2-l+1}h \\&- \sum_{k=1}^{m_1} \binom{m_1}{k}\na_x^{k+1}U^K\na_x^{m_1-k}\na_y^{m_2+1}h.
\end{align*}
Putting those last two lines together and using Cauchy-Schwarz we get :
\begin{multline*}
\na_x^{m_1}\na_y^{m_2}h.\left[v\cdot\na,\na_x^{m_1}\na_y^{m_2}\right]h \\ \geqslant -\theta(1+\gamma_t)\left( \sum_{l=1}^{m_2} |\na_x^{m_1+1}\na_y^{m_2-l}h|^2 + \sum_{\underset{1\leqslant l\leqslant m_2}{1\leqslant k\leqslant m_1}}|\na_x^{m_1-k+1}\na_y^{m_2-l}h|^2 \right. \\ \left. + \sum_{l=1}^{m_2} |\na_x^{m_1}\na_y^{m_2-l+1}h|^2 + \sum_{k=1}^{m_1} |\na_x^{m_1-k}\na_y^{m_2+1}h|^2\right) .
\end{multline*}
This concludes the proof.
\end{proof}

\begin{lemma}\label{lemPm}
For all $m\in\mathbb{N}$  there exists some constant $\theta>0$ such that for all smooth $h$, denoting $P^m(t,\beta) = \int_{M_K\times M_K} \na_x^{m-1}\na_y \frac{f^K_t}{\mu_{\beta}^K}\cdot \na_x^m\frac{f^K_t}{\mu_{\beta}^K} \dd \mu_{\beta}^K$:
\begin{multline*}
(\partial_tP^m)(t,\beta_t) \leqslant\int_{M_K\times M_K} -\left(\frac{1}{2}\mathbbm{1}_{\mathcal{M}} -\theta\mathbbm{1}_{\mathcal{M}^c}\right) |\na_x^{m}h^K_t|^2 \\ +\theta (1+\gamma_t)\left( \sum_{k=1}^{m-2} |\na_x^kh^K_t|^2  +(|\na\sigma|^2 + \sigma) |\na_x^{m+1}h^K_t|^2 + \sum_{k=1}^{m-1} |\na_x^k\na_yh^K_t|^2 \right) \dd\mu_{\beta_t}^K.
\end{multline*}
\end{lemma}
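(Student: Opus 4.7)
The plan is to differentiate $P^m(t,\beta)$ at fixed $\beta$ and evaluate at $\beta=\beta_t$. Writing $h=h^K_t$, we have $\partial_t(f_t^K/\mu^K_\beta)|_{\beta=\beta_t} = L^{K,*}_t h$, so the product rule gives
\[
(\partial_t P^m)(t,\beta_t) = \int \na_x^{m-1}\na_y L^{K,*}_t h \cdot \na_x^m h\, d\mu^K_{\beta_t} + \int \na_x^{m-1}\na_y h \cdot \na_x^m L^{K,*}_t h\, d\mu^K_{\beta_t}.
\]
Using the commutation identity $\na_x^{m-1}\na_y L^{K,*}_t = L^{K,*}_t \na_x^{m-1}\na_y + [\na_x^{m-1}\na_y,L^{K,*}_t]$ (and analogously for $\na_x^m$), then integrating by parts via $(L^{K,*}_t)^* = L_t$ in $L^2(\mu^K_{\beta_t})$, I would rewrite this as
\[
(\partial_t P^m)(t,\beta_t) = -2\int \Gamma_{L_t}\po\na_x^{m-1}\na_y h,\na_x^m h\pf d\mu^K_{\beta_t} + R_1 + R_2,
\]
where $\Gamma_{L_t}(u,v)=\beta_t^{-1}\po\sigma\na_x u\cdot\na_x v+\gamma_t\na_y u\cdot\na_y v\pf$ is the carré du champ of $L_t$, and $R_1,R_2$ are the two commutator integrals.

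The key step is extracting the dissipation in $\na_x^m h$ from $R_1$. Decompose $L^{K,*}_t = S_t+A_t$ into its symmetric and antisymmetric parts in $L^2(\mu^K_{\beta_t})$; the antisymmetric Hamiltonian part is $A_t = \na U^K\cdot\na_y - \na W\cdot\na_x$, and since $W$ depends only on $y$, a direct computation gives
\[ [\na_y,A_t] = -\na^2 W\cdot\na_x, \]
which equals $-\na_x$ precisely on $\mathcal M$. As $\na^2 W$ is a function of $y$ alone, $\na_x^{m-1}[\na_y,A_t]h = -\na^2 W\,\na_x^m h$, whose integration against $\na_x^m h$ produces
\[
-\int (\na^2 W\,\na_x^m h)\cdot\na_x^m h\, d\mu^K_{\beta_t} \leq -\int \mathbbm{1}_\mathcal{M}|\na_x^m h|^2 d\mu^K_{\beta_t} + \|\na^2 W\|_\infty\int\mathbbm{1}_{\mathcal{M}^c}|\na_x^m h|^2 d\mu^K_{\beta_t},
\]
the dissipative term of the lemma (the factor $\tfrac12$ in the final bound appears after absorbing cross terms below via Young's inequality).

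All remaining contributions of $R_1$, $R_2$ and the carré du champ integral are bounded products of smooth coefficients multiplying mixed derivatives of $h$ of total order at most $m+1$ (with at most one extra $\na_x$ coming from the $\sigma\beta_t^{-1}\Delta_x$ piece of $L^{K,*}_t$). Concretely, $[\na_x^m,A_t]$ produces $\na^{k+1}U^K\cdot\na_x^{m-k}\na_y h$ terms (since $W$ is $x$-independent); $[\na_x^{m-1}\na_y,S_t]$ generates terms involving $\na^l\sigma\cdot\na_x^{m+1}h$ and $\gamma_t\na^2 W\na_y h$; and $\Gamma_{L_t}(\na_x^{m-1}\na_y h,\na_x^m h)$ yields $\sigma\na_x^{m+1}h\cdot\na_x^m\na_y h$ and $\gamma_t\na_x^{m-1}\na_y^2 h\cdot\na_x^m\na_y h$. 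Applying Young's inequality to every cross term (tuned so the total coefficient on $|\na_x^m h|^2$ absorbed is at most $\tfrac12\mathbbm{1}_{\mathcal M}|\na_x^m h|^2$, degrading the $-|\na_x^m h|^2$ from step two to $-\tfrac12|\na_x^m h|^2$ on $\mathcal M$) packages everything into the stated categories.

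The main obstacle is the bookkeeping: one must verify that the $|\na_x^{m+1}h|^2$ contributions appear only with the prefactor $|\na\sigma|^2+\sigma$ — which automatically localizes them on $\mathcal M^c$, where $\sigma=\sigma_*>0$ and $|\na\sigma|^2\leq\sigma$ by Lemma~\ref{hypo:compact1} — while no spurious $|\na_x^{m+1}h|^2$ appears on $\mathcal M$ (this is guaranteed since $\sigma$ and $\na\sigma$ vanish there). The remaining commutator errors, once Young's inequality has been applied, must fit into the stated forms $\sum_{k=1}^{m-2}|\na_x^k h|^2$ and $\sum_{k=1}^{m-1}|\na_x^k\na_y h|^2$ with coefficient at most $\theta(1+\gamma_t)$. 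This follows the template of Zhang's $H^m$-hypocoercivity in \cite{Hyp}, adapted here to the non-reversible kinetic dynamics with the added $\sigma(y)\na_x^*\na_x$ elliptic part concentrated on $\mathcal M^c$.
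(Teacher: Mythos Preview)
Your approach is essentially the same as the paper's: product rule, commutator expansion to isolate $L_t^{K,*}\nabla_x^{m-1}\nabla_y h\cdot\nabla_x^m h + \nabla_x^{m-1}\nabla_y h\cdot L_t^{K,*}\nabla_x^m h$ (which integrates to the bilinear carr\'e du champ), extraction of the key dissipation via $[\nabla_y,-\nabla W\cdot\nabla_x]=-\nabla^2 W\,\nabla_x$, and Young's inequality on the cross terms. The paper commutes $\nabla_y$ and then each $\nabla_x$ past $L_t^{K,*}$ one at a time rather than splitting $L_t^{K,*}=S_t+A_t$, but this is only an organizational difference.

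One bookkeeping point you should be aware of: the carr\'e du champ term $-2\Gamma_{L_t}(\nabla_x^{m-1}\nabla_y h,\nabla_x^m h)$, after Young, produces $\gamma_t\beta_t^{-1}|\nabla_x^m\nabla_y h|^2$ and $\gamma_t\beta_t^{-1}|\nabla_x^{m-1}\nabla_y^2 h|^2$, which do \emph{not} literally fit the stated categories $\sum_{k\leqslant m-1}|\nabla_x^k\nabla_y h|^2$ (and similarly some commutator pieces from $[\nabla_x^{m-1},A_t]\nabla_y$ yield second $y$-derivatives). The paper's own proof in fact displays $|\nabla_x^m\nabla_y h|^2$ in its final bound for the carr\'e du champ, so the discrepancy is already present in the paper's statement. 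These extra order-$(m{+}1)$ terms are harmless in the application (Lemma~\ref{evol2}), where they are absorbed by the dissipation $\beta_t^{-1}\omega_{i,m}(\gamma_t|\nabla_x^i\nabla_y^{m+1-i}h|^2+\sigma|\nabla_x^{i+1}\nabla_y^{m-i}h|^2)$ coming from Lemma~\ref{commu}; but you should not claim they fit into the forms listed in the lemma as stated.
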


\begin{proof}
The derivative of $P^m$ is given by :
\[ (\partial_tP^m)(t,\beta_t) =  \int_{M_K\times M_K}\left( \na_x^{m-1}\na_yL_t^{K,*}h^K_t\cdot \na_x^mh^K_t + \na_x^{m-1}\na_yh^K_t\cdot \na_x^mL_t^{K,*}h^K_t \right)\dd \mu_{\beta_t}^K.\]
We then have to study the two terms in the integral separately at first, by using commutators. For any smooth h :
\begin{multline*}
\na_x^{m-1}\na_yL_t^{K,*}h\cdot \na_x^mh = \na_x^{m-1}\left[\na_y,L_t^{K,*}\right]h\cdot \na_x^mh \\+ \sum_{k=1}^{m-1} \na_x^{k-1} \left[\na_x,L_t^{K,*}\right]\na_x^{m-k}\na_yh\cdot \na_x^mh + L_t^{K,*}\na_x^{m-1}\na_yh\cdot \na_x^mh \\ = \na_x^{m-1}\left( -\na W\na_x- \na\sigma\na U^K.\na_x -\gamma_t\na W\na_y+\beta_t\na\sigma\Delta_x \right)h\cdot \na_x^mh \\  + \sum_{k=1}^{m-1} \na_x^{k-1} \left( \na^2U^K\na_y-\sigma\na^2U^K\na_x \right)\na_x^{m-k}\na_yh\cdot \na_x^mh + L_t^{K,*}\na_x^{m-1}\na_yh\cdot \na_x^mh \\ \leqslant -\left(\frac{1}{2}\mathbbm{1}_{\mathcal{M}} -\theta\mathbbm{1}_{\mathcal{M}^c}\right) |\na_x^{m}h|^2 + \theta\sum_{k=1}^{m-1}|\na_x^kh|^2 + \theta(1+\gamma_t)\sum_{k=1}^{m-1}|\na_x^{k}\na_yh|^2 \\+ \theta|\na\sigma|^2|\na_x^{m+1}h|^2 + L_t^{K,*}\na_x^{m-1}\na_yh\cdot \na_x^mh
\end{multline*}
where we used that $|\na\sigma\na U^K|\leqslant \frac{1}{2}$. Similarly, we have :
\begin{equation*}
\na_x^{m-1}\na_yh\cdot \na_x^mL_t^{K,*}h \leqslant \theta\sum_{k=1}^{m-1}|\na_x^{k}\na_yh|^2 \\+\theta\sum_{k=1}^{m-1}|\na_x^kh|^2 + \na_x^{m-1}\na_yh\cdot L_t^{K,*}\na_x^mh.
\end{equation*}
We conclude with the fact that :
\begin{multline*} 
\int_{M_K\times M_K} L_t^*\na_x^{m-1}\na_yh^K_t\cdot \na_x^mh^K_t + \na_x^{m-1}\na_yh^K_t\cdot L^*_t\na_x^mh^K_t \dd\mu^K_{\beta_t} \\= \int_{M_K\times M_K} \Gamma_t(\na_x^{m-1}\na_yh^K_t,\na_x^mh^K_t)\dd\mu^K_{\beta_t} \\ \leqslant  \int_{M_K\times M_k} \theta(1+\gamma_t)\left(|\na_x^m\na_yh_t^K|^2 + |\na_x^{m-1}\na_xh_t^K|^2 + |\na_x^m\na_y|^2\right) + \beta_t^{-1} \sigma |\na_x^{k+1}h|^2 \dd\mu_{\beta_t}^K.
\end{multline*}
\end{proof}

In order to state the main lemma, we introduce the following set :\lj{
\[\mathcal P = \left\{ g:\R_+\mapsto\R_+ \text{ smooth, }\exists a,C>0 \text{ s.t. } \beta^{-a}/C\leqslant g(\beta),|g'(\beta)| \leqslant C\beta^a \right\},\]where $\mathcal P$ stands for polynomial.}
The main step in order to prove an analogous to Proposition~\ref{evol0} in higher Sobolev norms is then  the following dissipation result.

\begin{lemma}\label{evol2}
For all $m\geqslant 1$, there exist $q_m$, $r_m$, $(\omega_{i,m})_{i\leqslant m}$ and $\omega_m$ some functions of $\beta$,  all in $\mathcal P$, such that if $f_0^K\in\mathcal{C}^{\infty}(M_K\times M_K)$ then, for all $t\geqslant0$, \[\|h^K_t-1\|_{H^m\left(\mu_{\beta_t}^K\right)}\leqslant q_m(\beta_t)\tilde{N}_m(t)\] and :
\begin{multline}
(\partial_tN_m)(t,\beta_t) \\ \leqslant -r_m(\beta_t)\int_{M_K\times M_K} \left(\sigma|\na_x^{m+1}h^K_t|^2 + \sum_{i=0}^m |\na_x^{i}\na_y^{m+1-i}h^K_t|^2 + \sum_{1\leqslant i+j\leqslant m}|\na_x^{i}\na_y^{j}h_t^K|^2 \right)\dd\mu_{\beta_t}^K. \label{eq:dissipHm}
\end{multline}
\end{lemma}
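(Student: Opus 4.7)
The plan is to proceed by induction on $m$, building the modified norm $N_m$ by adding to $N_{m-1}$ the order-$m$ pieces $\sum_{i=0}^m \omega_{i,m}(\beta)\int|\na_x^i\na_y^{m-i}h|^2\dd \mu_\beta^K$ and the cross term $\omega_m(\beta) P^m$, then choosing the weights hierarchically. The base case $m=0$ reduces (up to swapping the non-compact velocity for the compact torus $M_K$) to the $L^2$ computation of Section~\ref{hypo1}, giving dissipation in $\gamma_t\beta_t^{-1}|\na_y h|^2 + \sigma\beta_t^{-1}|\na_x h|^2$; the extra reversible part $\sigma \na_x^*\na_x$ of the generator \eqref{eq2} only helps here. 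For the norm equivalence, the Cauchy--Schwarz bound $\omega_m|P^m|\leqslant \tfrac{\omega_m}{2}(\|\na_x^{m-1}\na_y h\|^2 + \|\na_x^m h\|^2)$ will show that, as soon as $\omega_m \leqslant \tfrac12\min(\omega_{m-1,m},\omega_{m,m})$, we have $N_m \gtrsim \sum_{k\leqslant m}\sum_i \omega_{i,k}\|\na_x^i\na_y^{k-i}h\|^2$, from which $\|h-1\|_{H^m}^2 \leqslant q_m(\beta_t) N_m$ follows with $q_m\in\mathcal P$ because all weights are taken in $\mathcal P$.

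For the dissipation, I would differentiate each added piece in time, picking up three kinds of contributions: (i) the pure dissipation $-\int\Gamma_{t,|\na_x^i\na_y^{m-i}\cdot|^2}(h)\dd\mu_{\beta_t}^K$ bounded below via Lemma~\ref{commu}, (ii) the analogous contribution from $P^m$ via Lemma~\ref{lemPm}, and (iii) a term coming from $\partial_t\mu_{\beta_t}^K=\beta_t'(H_K-\E H_K)\mu_{\beta_t}^K$ together with $\omega'_{\cdot}(\beta_t)\beta_t'$ from the weights; since $H_K$ is bounded on the compact torus and the weights lie in $\mathcal P$, (iii) contributes at most $\beta_t'\cdot \mathrm{poly}(\beta_t)\cdot N_m$, which is harmless (it is absorbed at the next step of the proof strategy using the Poincar\'e constant of $\mu_{\beta_t}^K$, but within Lemma~\ref{evol2} itself it suffices to collect it with the weighted dissipation). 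The favourable terms produced by (i) are exactly the dissipation requested in \eqref{eq:dissipHm} at the top order $m+1$ — namely $\beta_t^{-1}(\gamma_t|\na_x^i\na_y^{m+1-i}h|^2 + \sigma|\na_x^{i+1}\na_y^{m-i}h|^2)$ — while the error terms from Lemmas~\ref{commu} and~\ref{lemPm} consist of mixed derivatives of order at most $m+1$ with a strictly lower $y$-derivative count, together with a term $(|\na\sigma|^2+\sigma)|\na_x^{m+1}h|^2$ that will be absorbed by the top-order dissipation itself since $|\na\sigma|^2\leqslant\sigma$.

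The crux is absorbing the term $\theta\mathbbm{1}_{\mathcal M^c}|\na_x^m h|^2$ appearing in the cross term's dissipation, because $P^m$ only supplies pure $\na_x$-coercivity $-\tfrac12|\na_x^m h|^2$ on the set $\mathcal M=\{\na^2 W = I_d\}$. On $\mathcal M^c$, however, $\sigma=\sigma_*>0$ by Lemma~\ref{hypo:compact1}, so the dissipation $\sigma\beta_t^{-1}|\na_x^m h|^2$ coming from the term $\omega_{m-1,m-1}(\beta_t)\|\na_x^{m-1}h\|^2$ already present in $N_{m-1}$ dominates the unwanted $\omega_m \theta\mathbbm{1}_{\mathcal M^c}|\na_x^m h|^2$ as soon as $\omega_m \leqslant c\,\sigma_*\beta_t^{-1}\omega_{m-1,m-1}$ for a small enough constant $c$. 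This dictates the hierarchy $\omega_m \ll \omega_{i,m}\ll \omega_{i,k}$ for $k<m$, all in $\mathcal P$: the lower the order, the larger the weight, so that the dissipation produced at order $k$ absorbs the order-$k$ errors produced by differentiating the order-$(k+1)$ pieces.

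The main obstacle is precisely this bookkeeping: one must check that choosing all the weights as suitable polynomial powers of $\beta$ simultaneously (a) keeps $N_m$ equivalent to the $H^m(\mu_{\beta_t}^K)$ norm up to a polynomial factor, (b) makes the good $P^m$-coercivity dominate its $\mathcal M^c$ defect via the previously built dissipation, and (c) ensures the mixed error terms involving $(1+\gamma_t)|\na_x^k\na_y^l h|^2$ for $k+l\leqslant m+1$ are all absorbed by the corresponding terms in the desired dissipation bound. Once this hierarchy is set, gathering the contributions yields \eqref{eq:dissipHm} with a constant $r_m(\beta_t)\in\mathcal P$ (up to absorbing the harmless $\beta_t'\mathrm{poly}(\beta_t)N_m$ from (iii) into the left-hand side of the dissipation inequality), completing the induction.
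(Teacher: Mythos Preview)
Your approach is essentially the same as the paper's: induction on $m$, building $N_m$ from $N_{m-1}$ plus weighted top-order pieces and the cross term $\omega_m P^m$, then using Lemmas~\ref{commu} and~\ref{lemPm} and choosing the weights hierarchically in $\mathcal P$ so that the lower-order dissipation from the induction hypothesis absorbs the error terms. Your identification of the crux --- that the $\theta\mathbbm{1}_{\mathcal M^c}|\na_x^m h|^2$ defect from $P^m$ is controlled by the $r_{m-1}\sigma|\na_x^m h|^2$ term already present in the inductive dissipation because $\sigma=\sigma_*>0$ on $\mathcal M^c$ --- is exactly right.

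There is one misreading you should fix: the quantity $(\partial_t N_m)(t,\beta_t)$ in the statement is the \emph{partial} derivative of $N_m(t,\beta)$ in its first variable, evaluated at $\beta=\beta_t$. It is \emph{not} the total derivative $\tilde N_m'(t)$. Consequently your contribution~(iii) --- the terms coming from $\partial_t\mu_{\beta_t}^K$ and from $\omega'_{\cdot}(\beta_t)\beta_t'$ --- simply does not appear in this lemma: those are exactly the $\beta_t'(\partial_\beta N_m)(t,\beta_t)$ terms, handled separately in the proof of Proposition~\ref{evol1}. So there is no need for your parenthetical ``up to absorbing the harmless $\beta_t'\mathrm{poly}(\beta_t)N_m$''; the dissipation inequality \eqref{eq:dissipHm} holds cleanly as stated. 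Once you drop~(iii), your sketch matches the paper's proof.
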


\begin{proof}
The proof is by induction. In fact we start at $m=0$, setting simply
\[N_0(t,\beta)=\int_{M_K\times M_K} \left( \frac{f^K_t}{\mu_{\beta}^K}-1 \right)^2 \dd\mu^K_{\beta}.\]
Hence,
\begin{align*}
(\partial_tN_0)(t,\beta_t) &= -\int_{M_K\times M_K} \Gamma_t\left( \frac{f^K_t}{\mu_{\beta_t}^K} \right) \dd\mu_{\beta_t}^K  \\&=-\int_{M_K\times M_K} \beta_t^{-1}\left(\gamma_t|\na_yh_t^K|^2 + \sigma|\na_xh_t^K|^2\right) \dd\mu_{\beta_t}^K
\end{align*}
which is the result for $m=0$ and $r_0(\beta_t)=\beta_t^{-1}\min(1,\kappa)$. One could also have initialized the induction for $m=1$ as in Proposition~\ref{evol0}.
Now, let's fix $m\in\mathbb{N}$ and suppose we have the result for $m-1$ :
\begin{multline*}
(\partial_tN_{m-1})(t,\beta_t) \leqslant \\-r_{m-1}(\beta_t)\int_{M_K\times M_K} \left(\sigma|\na_x^{m}h^K_t|^2 + \sum_{i=0}^{m-1} |\na_x^{i}\na_y^{m-i}h^K_t|^2 + \sum_{i+j\leqslant m-1}|\na_x^{i}\na_y^{j}h_t^K|^2 \right)\dd\mu_{\beta_t}^K.
\end{multline*}
We set
\[ N_m = N_{m-1} + \int_{M_K \times M_K} \left(\sum_{i=0}^m \omega_{i,m}(\beta)\left|\na_x^i\na_y^{k-i}\frac{f^K_t}{\mu_{\beta}^K}\right |^2 + \omega_m(\beta)\na_x^{m-1}\na_y\frac{f^K_t}{\mu_{\beta}^K}\cdot\na_x^m\frac{f^K_t}{\mu_{\beta}^K} \right)\dd\mu_{\beta}^K \] 
with weights to be determined later on, so that 
\[ (\partial_tN_m)(t,\beta_t)=(\partial_tN_{m-1})(t,\beta_t) - \int_{M_K \times M_K} \sum_{i=0}^m \omega_{i,m}(\beta) \Gamma_{|\na_x^i\na_y^{k-i}\cdot|^2}(h^K_t)\dd\mu_{\beta_t}^K +\omega_m(\beta) (\partial_tP^m)(t,\beta_t) .\]
In this equality, using the two previous lemmas, the terms of order $m+1$ are bounded by: 
\begin{multline*}
\int_{M_K\times M_K} \left(\omega_{m-1,m}|\na \sigma|^2+\omega_m(\sigma + |\na \sigma|^2)\right)|\na_x^{m+1}h_t^K|^2\\ - \beta_t^{-1}\sum_{i=0}^m \omega_{i,m}(\beta_t)\left( \gamma_t|\na_x^i\na_y^{m+1-i}h_t^K|^2 + \sigma|\na_x^{i+1}\na_y^{m-i}h_t^K|^2  \right) \dd\mu_{\beta_t}^K   
\end{multline*}
In order to get \eqref{eq:dissipHm}, we would like this to be less than
\[-r_m(\beta_t)\int_{M_K\times M_K} \po\sigma|\na_x^{m+1}h_t^K|^2 + \sum_{i=0}^m|\na_x^i\na_y^{m+1-i}h^K_t|^2 \pf \dd\mu_{\beta_t}^K.\]
Using that $|\na\sigma|^2\leqslant \sigma$, this is indeed the case if we impose the conditions $\omega_{m,m}(\beta) \geqslant 4\beta\omega_m(\beta) +\beta \omega_{m_1,m}$ and $\omega_{i,m}(\beta) \geqslant r_m \beta \max(\kappa^{-1},\frac{1}{2})$. Next, the terms of order at most $m-1$ are bounded by :
\[\int_{M_K \times M_K} \theta(1+\gamma_t)\max_{0\leqslant i\leqslant m}\omega_{i,m}\sum_{i+j\leqslant m-1}|\na_x^i\na_y^jh^K_t|^2 - r_{m-1}\sum_{i+j\leqslant m-1}|\na_x^i\na_y^jh^K_t|^2 \dd\mu_{\beta_t}^K \]
which means, similarly, in order to get \eqref{eq:dissipHm}, we want to impose the  conditions $\max_{0\leqslant i\leqslant m}\omega_{i,m} \theta(1+\gamma_t)\leqslant  \frac{1}{2}r_{m-1}$ and $r_{m-1}\geqslant 2r_{m}$. Finally, the terms of order exactly $m$ are bounded by
\begin{multline*} -\int_{M_K \times M_K}\left( r_{m-1}\sum_{i=1}^{m-1} |\na_x^i\na_y^{m-i}h^K_t|^2 + \left(r_{m-1}\sigma + \omega_m\frac{1}{2}\mathbbm{1}_{\mathcal{M}} - \omega_m\theta\mathbbm{1}_{\mathcal{M}^c}\right)|\na_x^mh^K_t|^2\right. \\\left.- \max_i\omega_{i,m}\theta(1+\gamma_t)\sum_{i=0}^m|\na_x^i\na_y^{m-i}h_t^K|^2\right) \dd\mu_{\beta_t}^K 
\end{multline*}
which leads to the conditions: $r_m\leqslant \frac{1}{2}\omega_m$, $\omega_m\theta \leqslant \frac{1}{2}r_{m-1}\sigma_*$ and $\frac{1}{2}r_{m-1}\sigma_*\geqslant r_{m}$.

Now, fix the $\omega_{i,m}$ to be equal to $\frac{r_{m-1}}{4\theta(1+L\beta)}$ except $\omega_{m,m}$ equal to $\frac{r_{m-1}}{2\theta(1+L\beta)}$, then $\omega_m=\min (\frac{\sigma_*r_{m-1}}{2\theta},\frac{\omega_{m,m}}{4\beta},\omega_{m,m})$, and finally set $r_m = \min(\frac{r_{m-1}}{2},\frac{r_{m-1}\sigma_*}{2},\frac{\omega_m}{2},\beta^{-1}\min(\kappa,2)\omega_{i,m})$. These choices ensures that all the conditions in the computations above are met, which means that \eqref{eq:dissipHm} holds. Moreover, all these functions are in $\mathcal P$, which concludes.
\end{proof}

Similarly to Proposition~\ref{poin} but now in the fully (position and velocity) compact case, we have the following Poincar\'e inequality:
\begin{proposition}\label{prop_poin}
If $U^K:M_K\mapsto \R$ is a $\C^{\infty}$ function, $W$ is as constructed in Section~\ref{fullprocess}, and $\mu^K_{\beta}$   is the probability measure on $M_K\times M_K$ proportional to $e^{-\beta H_K}$, then there exists $\lambda:\R_+\mapsto\R_+$ such that for all $f\in\C^{\infty}(M_K\times M_K)$ :
\[ \lambda(\beta)\int_{M_K \times M_K} \left(f-\int_{M_K\times M_K} f\dd\mu_{\beta}^K \right)^2\dd\mu_{\beta} \leqslant \int_{M_K\times M_K}|\nabla f|^2 d\mu^K_{\beta} \]
and
\[ \lim_{\beta\rightarrow\infty}\frac{1}{\beta}\ln(\lambda(\beta)) = -c^*(U^K) \]
where $c^*(U^K)$ is defined as $c^*$ with $U$ replaced by $U^K$.
\end{proposition}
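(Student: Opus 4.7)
The plan is to exploit the product structure of $\mu_\beta^K$ and reduce to one-dimensional (in the sense of depending only on $x$ or only on $y$) Poincar\'e inequalities via the tensorization property, exactly as in the proof of Proposition~\ref{poin}. Since $H_K(x,y)=U^K(x)+W(y)$, the measure $\mu^K_\beta$ factors as a product measure $\mu^{U^K}_\beta(\dd x)\otimes \mu^{W}_\beta(\dd y)$, where $\mu^{U^K}_\beta$ (resp. $\mu^W_\beta$) is the probability measure on $M_K$ with density proportional to $e^{-\beta U^K(x)}$ (resp. $e^{-\beta W(y)}$). By \cite[Proposition 4.3.1]{BakryGentilLedoux}, if both factors satisfy a Poincar\'e inequality with respective constants $\lambda^{U^K}(\beta)$ and $\lambda^W(\beta)$, then $\mu_\beta^K$ satisfies a Poincar\'e inequality with constant $\lambda(\beta)=\min(\lambda^{U^K}(\beta),\lambda^W(\beta))$.

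For the $x$ factor, I would apply \cite[Theorem 1.14]{HoStKu} to $U^K$ seen as a smooth potential on the compact manifold $M_K$: this yields a Poincar\'e constant $\lambda^{U^K}(\beta)$ satisfying
\[\lim_{\beta\rightarrow\infty}\frac{1}{\beta}\ln\lambda^{U^K}(\beta)=-c^\ast(U^K).\]
For the $y$ factor, the same theorem applies to $W$ on $M_K$. The key observation here is that, by Lemma~\ref{hypo:compact1}, $W$ has a unique local minimum on the torus $M_K$, which is therefore also its unique global minimum; hence the critical height $c^\ast(W)$ equals zero. Consequently,
\[\lim_{\beta\rightarrow\infty}\frac{1}{\beta}\ln\lambda^W(\beta)=0,\]
meaning that $\lambda^W(\beta)$ decays at most subexponentially in $\beta$.

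Combining the two, since $c^\ast(U^K)>0$, for $\beta$ large enough we have $\lambda^{U^K}(\beta)\leqslant\lambda^W(\beta)$, so that $\lambda(\beta)=\lambda^{U^K}(\beta)$ for all large $\beta$, and therefore $\frac{1}{\beta}\ln\lambda(\beta)\to -c^\ast(U^K)$ as $\beta\to\infty$, as claimed. There is no real obstacle here: the only point that requires a (very short) verification is that $c^\ast(W)=0$, which follows directly from unimodality of $W$ on $M_K$ stated in Lemma~\ref{hypo:compact1}, since the infimum over continuous paths joining any two points can be taken to pass through the unique minimum without creating any barrier above the endpoints. Everything else is a direct invocation of Holley--Stroock combined with tensorization.
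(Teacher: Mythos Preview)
Your argument is correct and closely parallels the proof of Proposition~\ref{poin}, but the paper takes a slightly more direct route: since here both factors live on compact tori, one may apply \cite[Theorem~1.14]{HoStKu} \emph{directly} to $H_K=U^K+W$ on the compact manifold $M_K\times M_K$, and then simply observe that $c^*(H_K)=c^*(U^K)$ because $W$ has a unique local minimum. This avoids the tensorization step entirely. Your approach is of course equally valid and has the pedagogical advantage of mirroring the earlier non-compact case.

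One small point: you assert $c^*(U^K)>0$ to deduce $\lambda^{U^K}(\beta)\leqslant\lambda^W(\beta)$ for large $\beta$, but the proposition is stated for an arbitrary smooth $U^K$, so $c^*(U^K)$ could be zero. This does not actually affect your conclusion, since $\ln\min(a,b)=\min(\ln a,\ln b)$ gives
\[
\tfrac{1}{\beta}\ln\lambda(\beta)=\min\!\big(\tfrac{1}{\beta}\ln\lambda^{U^K}(\beta),\,\tfrac{1}{\beta}\ln\lambda^W(\beta)\big)\longrightarrow\min(-c^*(U^K),0)=-c^*(U^K)
\]
regardless of whether the inequality is strict.
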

\begin{proof} Using that $c^*(H_K)=c^*(U^K)$ since $W$ has only one minimum, this is \cite[Theorem 1.14]{HoStKu}.  
\end{proof}

We are now ready to prove  Proposition~\ref{evol1}. 

\begin{proof}[Proof of Proposition~\ref{evol1}]
Let $m=d+1$. We first show that there exist some constants $C_1,C_2>0$ such that $\tilde{N}_m(t)\leqslant C_1e^{-C_2t^{\alpha}}$ for all $t\geqslant $0, where $\alpha = \frac{c-c^*}{2c}$. Indeed, we have :
\[ \tilde{N}_m'(t) = \partial_tN_m(t,\beta_t) + \beta'_t\partial_{\beta}N_m(t,\beta_t) .\]
From Lemma~\ref{evol2} and Proposition~\ref{prop_poin}, 
\[\partial_tN_m \leqslant -\frac{\tilde \lambda(\beta_t)r_m}{2}N_m.\]
where $\tilde \lambda$ satisfies $\lim_{\beta\rightarrow\infty}\frac{1}{\beta}\ln(\tilde \lambda(\beta)) = -c^*(U^K)$ and $r_m\geqslant \beta^{-a}/C$ for some $a>0$. Hence, we have some constant $C>0$ such that $\lambda(\beta_t)r_m/2\geqslant Ce^{-\frac{c+c^*}{2}\beta_t} = C(e^{c\beta_0}+t)^{-1+\alpha}$.
On the other hand, from the conditions on the $\omega$'s and since $\partial_\beta e^{\beta H_K}\leqslant \|H_K\|_{\infty}e^{\beta H}$, there exist some $a,C'>0$ such that $\partial_{\beta}N_m\leqslant C' \beta_t^a N_m $ yielding $\beta'_t\partial_{\beta}N_m \leqslant C'(1+t)^{-1+\frac{\alpha}{2}}N_m$ for some other constant $C'>0$, and thus :
\[ \tilde{N}_{m}' \leqslant \left(-\frac{C}{(1+t)^{1-\alpha}} + \frac{C'}{(1+t)^{1-\frac{\alpha}{2}}}\right) \tilde{N}_m ,\]
yielding our first claim.

From the Sobolev embedding, there exists $C>0$ such that for any smooth $h$ on $M_K \times M_K$, we have $\|h\|_{L^{\infty}}\leqslant C\|h\|_{H^m\left(\mu_{\beta_0}^K\right)}$. Then we have :
\begin{multline*}
\|h\|_{L^{\infty}} \leqslant 1 + \|h-1\|_{L^{\infty}} \leqslant 1+C\|h-1\|_{H^m\left(\mu_{\beta_0}^K\right)}  \leqslant 1+Ce^{\beta_t\|H\|_{\infty}}\|h-1\|_{H^m\left(\mu_{\beta_t}^K\right)}. 
\end{multline*}
Applying this with $h=h_t^K$ and using that  $\|h^K_t-1\|_{H^m(\mu_{\beta_t}^K)}\leqslant C\beta_t^a\tilde{N}_m(t)$ for some $a,C>0$, we get constants $C,b>0$ such that :
\begin{equation*}
    \|h^K_t\|_{L^{\infty}} \leqslant 1+ C(1+t)^{b}e^{-C_2 t^{\alpha}},
\end{equation*}
yielding the result.
\end{proof}

\section{Faster than logarithmic schedules}\label{SecFast}

\begin{proof}[Proof of Theorem~\ref{thmFast}]
Some parts of the proof follow the proof of Theorem~\ref{non}, to which we refer for details, focusing  on the new arguments in the present settings.

Up to a translation we assume without loss of generality that $x_*=0$.  

As in the proof of Theorem~\ref{non}, it is in fact sufficient to prove that for all $\delta>0$
\[\mathbb P_{f_0}\po |Z_t|\leqslant \min(\delta,\sqrt{\varepsilon_t})\ \forall t\geqslant t_0\pf >0\]
for a given fixed initial condition $f_0$ and for $t_0$ large enough and then use a controlability argument based on Lemma~\ref{acces} and the Markov property to get the claimed result. Besides, it is sufficient to prove this result for $\delta$ small enough and, since the times in $[0,t_0]$ are treated with the controlability argument, focusing on the times larger than $t_0$, we are interested in an event under which the process stays in a small ball around $0$, and we can thus modify $U$ oustside such a ball without modifying the result.  

The Jacobian matrix of the drift $b(z)=(y,-\na U(x)-\gamma y)$ of the process at $z=(x,y)$ is
\[J(z) = \begin{pmatrix}
0 &  I \\ - \na^2 U(x) & -\gamma I
\end{pmatrix}\,.\]
\lj{Since $\na^2 U(0)$ is positive semi-definite, a simple calculation shows that} the eigenvalues of $J(0)$ all have a negative real part, and thus there exist a positive definite symmetric matrix $M$ of size $2d$ and $r>0$ such that
\[u\cdot M J(0) u \leqslant - 2r u \cdot M u\]
for all $u\in\R^{2d}$, see e.g. \cite[Lemma 4.3]{ArnoldErb}. Write $\|u\|_M = \sqrt{u\cdot Mu}$ and let $\delta>0$ be sufficiently small so that 
\begin{equation}\label{eq:contraction}
u\cdot M J(z) u \leqslant - r \|u\|_M^2    
\end{equation}
for all $u\in\R^{2d}$ and all $z\in\R^{2d}$ with $\|z\|_M \leqslant \delta$. As discussed above, up to a modification of $U$ outside the ball $\mathcal B(0,\delta)$, without loss of generality we can modify the potential $U$ outside this ball and assume that in fact \eqref{eq:contraction} holds for all $z\in\R^{2d}$.  Similarly we assume that $\na^3 U$ is bounded.

 Using that $b(0)=0$, we get that for all $z\in\R^{2d}$
\begin{equation}\label{eq:fastcontraction}
z\cdot M b(z) =z\cdot M \po b(z)-b(0)\pf = \int_0^1 z\cdot M J(pz)z \dd p \leqslant -r\|z\|_M^2\,. 
\end{equation}

Let $Z$ and $\tilde Z$ be two solutions of \eqref{eq} with the same initial condition and driven by the same Brownian motion, but with two different potentials, $Z$ being associated to $U$ and $\tilde Z$ to $\tilde U(z)=z\cdot \na^2 U(0) z/2$. In particular, $\tilde Z$ is a Gaussian process. Then
\begin{align*}
\dd \|Z_t-\tilde Z_t\|_M^2 &\leqslant -2r\|Z_t-\tilde Z_t\|_M^2\dd t + 2\|Z_t-\tilde Z_t\|_M |M^{1/2}||\na U(\tilde Z_t)-\na^2 U(0)\tilde Z_t|\dd t \\
& \leqslant  -r\|Z_t-\tilde Z_t\|_M^2\dd t + \frac1{4r}\|\na^3 U\|_\infty^2 |M| |\tilde Z_t|^4\dd t\,.
\end{align*}
Let $(\alpha_t)_{t\geqslant 0}$ be a positive non-increasing function, vanishing at infinity, to be chosen later on. Then the event $\mathcal G = \{|\tilde Z_t| \leqslant \alpha_t\ \forall t\geqslant 0\}$ implies that for all $t\geqslant 0$
\[|Z_t|\leqslant |\tilde Z_t| + |M^{-1/2}|\|Z_t-\tilde Z_t\|_M \leqslant \alpha_t +  C \po\int_0^t \alpha_s^2 e^{r(s-t)}\dd s \pf^{1/2} := \tilde \alpha_t\]
with $C^2= |M||M^{-1}|\|\na^3 U\|_\infty/(2\sqrt r)$. Notice that $\tilde\alpha_t$ vanishes at infinity, more precisely $\tilde \alpha_t \leqslant \alpha_t + C(\alpha_{3t/4}/\sqrt{r} + \alpha_0 \sqrt{t} e^{-rt/8})$.
 As a consequence, it only remains to prove that $\mathcal G$ has a positive probability for some suitable function $t\mapsto \alpha_t$. Again, following the proof of Theorem~\ref{non}, we see that it is sufficient to prove that $t\mapsto \mathbb P(|\tilde Z_t|\geqslant \alpha_t)$ is integrable for a given fixed initial condition. Since $\tilde Z$ is a Gaussian process, we simply have to control its second moment.  We chose an initial condition such that $\mathbb E(\tilde Z_0)=0$, so that $\mathbb E(\tilde Z_t)=0$ for all $t\geqslant 0$.

Remark that the drift of $\tilde Z_t$ also satisfies \eqref{eq:fastcontraction}, since its Jacobian matrix at any $z\in\R^{2d}$ is equal to $J(0)$. Thus we get, for all $t\geqslant 0$,
\[\partial_t \mathbb E\po \|\tilde Z_t\|_M^2 \pf \leqslant - r \mathbb E\po \|\tilde Z_t\|_M^2 \pf  + \frac{d|M|}{\beta_t} ,\]
so that, writing $C'=|M||M^{-1}|( \mathbb E|\tilde Z_0|^2+d)$,
 \[   \mathbb E\po |\tilde Z_t|^2 \pf  \leqslant |M^{-1}|\mathbb E\po \|\tilde Z_t\|_M^2 \pf  \leqslant  C' \po e^{-rt} + \int_0^t \frac{e^{r(s-t)}}{\beta_s}\dd s \pf  
    := C'\kappa_t\,,\]
 Using that the law of $\tilde Z_t$ is Gaussian with zero mean, we get that there exist $K,h>0$ such that for all $t\geqslant 0$
\[\mathbb P(|\tilde Z_t| \geqslant \alpha_t)  \leqslant K \exp\po - h \frac{\alpha_t^2}{\kappa_t}\pf.\]  
This is integrable in time if we chose $\alpha_t^2=\sup_{s\geqslant t}2\ln(s)\kappa_s /h$ (which is non-increasing). Since $\ln(t)=o(\beta_t)$ and $\kappa_t \leqslant \sup_{s\geqslant 2t/3}1/(r\beta_{s})+e^{-rt/3}(1+t\sup_{u\geqslant 0}1/\beta_u)$, we get that $\kappa_t = o( 1/\ln(t))$, in other words $\alpha_t \rightarrow 0$. We conclude by using the previous bounds on $\tilde \alpha_t$ and   $\kappa_t$ and the fact that
\[\sup_{s\geqslant 3t/4} \ln(s) \sup_{u\geqslant 2s/3} \beta_u^{-1} \leqslant \sup_{s\geqslant t/2} \ln(s) \beta_s^{-1}\]
to get the quantitative convergence speed $\varepsilon_t$ stated in the theorem.

\end{proof} 

\noindent\textbf{Remark:} in order to adapt the previous proof in a case where $\gamma_t$ is not constant, we can still find $M$ and $r$ such  that \eqref{eq:contraction} holds with the Jacobian matrix of the drift at time $t$, but they depend on $\gamma_t$, hence when differentiating $\|Z_t-\tilde Z_t\|_M^2$ there is an additional term involving $\partial_t M$ which has to be sufficiently small to be absorbed by the contraction at rate $r$. Moreover, $r$ scales as $\gamma_t$ when $\gamma_t\rightarrow 0$ and as $1/\gamma_t$ when $\gamma_t\rightarrow +\infty$, which means for the proof to be valid, $\gamma_t$ or $1/\gamma_t$ should not be too small depending on $\beta_t$.

\subsection*{Acknowledgements}

This work has been partially funded by the  French ANR grants EFI (ANR-17-CE40-0030) and SWIDIMS (ANR-20-CE40-0022). P. Monmarch\'e thanks Laurent Dietrich for his help in the proof of Lemma~\ref{acces}. \pierre{The authors thank Martin Chak for pointing out an error in an earlier version of Lemma~\ref{regu}}.

\bibliographystyle{plain}
\bibliography{Recuit_Arxiv.bib}

\begin{thebibliography}{10}

\bibitem{ArnoldErb}
A.~{Arnold} and J.~{Erb}.
\newblock {Sharp entropy decay for hypocoercive and non-symmetric Fokker-Planck
  equations with linear drift}.
\newblock {\em arXiv e-prints}, page arXiv:1409.5425, September 2014.

\bibitem{BakryGentilLedoux}
D.~Bakry, I.~Gentil, and M.~Ledoux.
\newblock {\em Analysis and geometry of {M}arkov diffusion operators}, volume
  348 of {\em Grundlehren der Mathematischen Wissenschaften [Fundamental
  Principles of Mathematical Sciences]}.
\newblock Springer, Cham, 2014.

\bibitem{Krylov}
V.~I. Bogachev, N.~V. Krylov, M.~R{\"o}ckner, and S.~V. Shaposhnikov.
\newblock {\em Fokker-{Planck}-{Kolmogorov} equations}, volume 207 of {\em
  Math. Surv. Monogr.}
\newblock Providence, RI: American Mathematical Society (AMS), 2015.

\bibitem{GenLan}
M.~{Chak}, N.~{Kantas}, and G.~A. {Pavliotis}.
\newblock {On the Generalised Langevin Equation for Simulated Annealing}.
\newblock {\em arXiv e-prints}, page arXiv:2003.06448, March 2020.

\bibitem{CHS}
T.-S. Chiang, C.-R. Hwang, and S.J. Sheu.
\newblock Diffusion for global optimization in $\mathbb{R}^n $.
\newblock {\em SIAM Journal on Control and Optimization}, 25(3):737--753, 1987.

\bibitem{FTM}
N.~Fournier, P.~Monmarch{\'e}, and C.~Tardif.
\newblock Simulated annealing in {$\mathbb R^d$} with slowly growing
  potentials.
\newblock {\em Stochastic Processes and their Applications}, 131:276--291,
  2021.

\bibitem{FourTar}
N.~Fournier and C.~Tardif.
\newblock On the simulated annealing in $\mathbb{R}^n$.
\newblock {\em J. Funct. Anal.}, 281, 2021.

\bibitem{Hajek}
B.~Hajek.
\newblock Cooling schedules for optimal annealing.
\newblock {\em Math. Oper. Res.}, 13(2):311--329, 1988.

\bibitem{HoStKu}
R.~A. Holley, S.~Kusuoka, and D.~W. Stroock.
\newblock Asymptotics of the spectral gap with applications to the theory of
  simulated annealing.
\newblock {\em J. Funct. Anal.}, 83(2):333--347, 1989.

\bibitem{HolStr}
R.~A. Holley and D.~W. Stroock.
\newblock Simulated annealing via {S}obolev inequalities.
\newblock {\em Comm. Math. Phys.}, 115(4):553--569, 1988.

\bibitem{Locherbach}
R.~H{\"o}pfner, E.~L{\"o}cherbach, and M.~Thieullen.
\newblock {Strongly degenerate time inhomogeneous SDEs: Densities and support
  properties. Application to Hodgkin-Huxley type systems}.
\newblock {\em Bernoulli}, 23(4A):2587 -- 2616, 2017.

\bibitem{ramiletal}
T.~{Leli{\`e}vre}, M.~{Ramil}, and J.~{Reygner}.
\newblock {A probabilistic study of the kinetic Fokker-Planck equation in
  cylindrical domains}.
\newblock {\em arXiv e-prints}, page arXiv:2010.10157, October 2020.

\bibitem{Miclo}
L.~Miclo.
\newblock Recuit simul\'e sur {$R^n$}. \'{E}tude de l'\'evolution de
  l'\'energie libre.
\newblock {\em Ann. Inst. H. Poincar\'e Probab. Statist.}, 28(2):235--266,
  1992.

\bibitem{Miclo2}
Laurent Miclo.
\newblock Remarques sur l'ergodicité des algorithmes de recuit simulé sur un
  graphe.
\newblock {\em Stochastic Processes and their Applications}, 58(2):329--360,
  1995.

\bibitem{Pierro}
P.~{Monmarch{\'e}}.
\newblock Hypocoercivity in metastable settings and kinetic simulated
  annealing.
\newblock {\em Probability Theory and Related Fields}, Jan 2018.

\bibitem{Pierro2}
P.~{Monmarch{\'e}}.
\newblock {Generalized {$\Gamma$} calculus and application to interacting
  particles on a graph}.
\newblock {\em Potential Analysis}, 50:439--466, 2019.

\bibitem{Royer}
G.~Royer.
\newblock A remark on simulated annealing of diffusion processes.
\newblock {\em SIAM J. Control Optim.}, 27(6):1403--1408, 1989.

\bibitem{StroockVaradhan}
D.W. Stroock and S.R.S Varadhan.
\newblock On the support of diffusion processes with applications to the strong
  maximum principle.
\newblock {\em Proc. Sixth Berkeley Symp. Math. Statist. Prob.}, III:333--359,
  1972.

\bibitem{Taniguchi}
S.~Taniguchi.
\newblock Applications of {Malliavin}'s calculus to time-dependent systems of
  heat equations.
\newblock {\em Osaka J. Math.}, 22:307--320, 1985.

\bibitem{Vil}
C.~Villani.
\newblock Hypocoercivity.
\newblock {\em Mem. Amer. Math. Soc.}, 202(950):iv+141, 2009.

\bibitem{Hyp}
C.~{Zhang}.
\newblock {Hypocoercivity and global hypoellipticity for the kinetic
  Fokker-Planck equation in $H^k$ spaces}.
\newblock {\em arXiv e-prints}, page arXiv:2012.06253, December 2020.

\bibitem{Zitt}
P-A Zitt.
\newblock Annealing diffusions in a potential function with a slow growth.
\newblock {\em Stochastic Process. Appl.}, 118(1):76--119, 2008.

\end{thebibliography}

\end{document}